\documentclass{amsart}

\usepackage{amsmath,amsfonts,amssymb,amsthm} 
\usepackage[all, cmtip]{xy} 
\usepackage{enumitem} 
\usepackage{pict2e} 
\usepackage{caption} 
\usepackage[T1]{fontenc} 

\usepackage{mathtools}

\theoremstyle{definition}
\newtheorem{defn}{Definition}[section]

\newtheorem{remark}[defn]{Remark}

\theoremstyle{plain}
\newtheorem{lemma}[defn]{Lemma}
\newtheorem{theorem}[defn]{Theorem}
\newtheorem{proposition}[defn]{Proposition}
\newtheorem{corollary}[defn]{Corollary}


\newcommand{\CC}{\mathbb{C}}

\newcommand{\HH}{\mathbb{H}}

\newcommand{\PP}{\mathbb{P}}
\newcommand{\QQ}{\mathbb{Q}}
\newcommand{\RR}{\mathbb{R}}

\newcommand{\VV}{\mathbb{V}}

\newcommand{\ZZ}{\mathbb{Z}}

\newcommand{\calA}{\mathcal{A}}

\newcommand{\calD}{\mathcal{D}}

\newcommand{\calF}{\mathcal{F}}

\newcommand{\calH}{\mathcal{H}}

\newcommand{\calL}{\mathcal{L}}
\newcommand{\calM}{\mathcal{M}}
\newcommand{\calN}{\mathcal{N}}
\newcommand{\calO}{\mathcal{O}}

\newcommand{\calS}{\mathcal{S}}
\newcommand{\calT}{\mathcal{T}}

\newcommand{\calX}{\mathcal{X}}


\DeclareMathOperator{\NS}{NS}
\DeclareMathOperator{\Pic}{Pic}
\DeclareMathOperator{\rank}{rank}
\DeclareMathOperator{\Aut}{Aut}

\numberwithin{table}{section}

\begin{document}

\title[Calabi-Yau Threefolds fibred by Lattice Polarized K3 Surfaces]{Calabi-Yau threefolds fibred by High Rank Lattice Polarized K3 Surfaces}

\author[C. F. Doran]{Charles F. Doran}
\address{Department of Mathematical and Statistical Sciences, 632 CAB, University of Alberta,  Edmonton, AB, T6G 2G1, Canada}
\email{charles.doran@ualberta.ca}
\thanks{C. F. Doran (University of Alberta) was supported by the Natural Sciences and Engineering Research Council of Canada, the Pacific Institute for the Mathematical Sciences, and the Visiting Campobassi Professorship at the University of Maryland.}

\author[A. Harder]{Andrew Harder}
\address{Department of Mathematics, Christmas-Saucon Hall, 14 E. Packer Ave, Bethlehem, PA, 18015, USA}
\email{anh318@lehigh.edu}
\thanks{A. Harder (Lehigh University) was partially supported by the Simons Collaboration Grant in \emph{Homological Mirror Symmetry}.}

\author[A.Y. Novoseltsev]{Andrey Y. Novoseltsev}
\address{Department of Mathematical and Statistical Sciences, 632 CAB, University of Alberta,  Edmonton, AB, T6G 2G1, Canada}
\email{novoselt@ualberta.ca}
\thanks{A. Y. Novoseltsev (University of Alberta) was supported by the University of Alberta Teaching and Learning Enhancement Fund.}

\author[A. Thompson]{Alan Thompson}
\address{Department of Mathematical Sciences, Loughborough University, Loughborough, LE11 3TU, United Kingdom}
\email{a.m.thompson@lboro.ac.uk}
\thanks{A. Thompson (Loughborough University) was supported by the Engineering and Physical Sciences Research Council programme grant \emph{Classification, Computation, and Construction: New Methods in Geometry}.}

\date{}

\begin{abstract}
We study threefolds fibred by K3 surfaces admitting a lattice polarization by a certain class of rank 19 lattices. We begin by showing that any family of such K3 surfaces is completely determined by a map from the base of the family to the appropriate K3 moduli space, which we call the generalized functional invariant. Then we show that if the threefold total space is a smooth Calabi-Yau, there are only finitely many possibilities for the polarizing lattice and the form of the generalized functional invariant. Finally, we construct explicit examples of Calabi-Yau threefolds realizing each case and compute their Hodge numbers. 
\end{abstract}
\maketitle

\section{Introduction}

The primary aim of this paper is to study threefolds fibred by K3 surfaces polarized by a certain class of rank $19$ lattice, with a particular focus on Calabi-Yau threefolds.

In more detail, this paper is concerned with the study of threefolds fibred by $M_n$-polarized K3 surfaces, where $M_n$ is the rank $19$ lattice
\[M_n := H \oplus E_8 \oplus E_8 \oplus \langle -2n \rangle;\]
here $H$ denotes the hyperbolic plane lattice and $E_8$ denotes the negative definite $E_8$ root lattice. Such K3 surfaces, and the threefolds fibred by them, are interesting from the perspective of mirror symmetry. In particular, $M_n$-polarized K3 surfaces are mirror (here ``mirror'' is used in the sense of Dolgachev \cite{mslpk3s} and Nikukin \cite{fagkk3s}) to K3 surfaces of degree $2n$, and we expect mirror symmetry for Calabi-Yau threefolds fibred by $M_n$-polarized K3 surfaces to be closely linked to the Fano-LG correspondence for smooth Fano threefolds of Picard rank $1$ (see Remarks \ref{rem:mirrors} and \ref{rem:mirrors2}).

Our study of threefolds fibred by $M_n$-polarized K3 surfaces was initiated in \cite{cytfmqk3s}, where we performed a careful examination of threefolds fibred by $M_2$-polarized K3 surfaces. In particular, we gave a classification of Calabi-Yau threefolds fibred non-isotrivially by $M_2$-polarized K3 surfaces. This classification raised a natural question: for which $n$ do there exist Calabi-Yau threefolds fibred non-isotrivially by $M_n$-polarized K3 surfaces, and can they also be classified? The aim of this paper is to answer that question.

Our first main result (Theorem \ref{genfuninv}) is a generalization of \cite[Theorem 2.3]{cytfmqk3s}. It shows that, for $n \geq 2$, an $M_n$-polarized family of K3 surfaces (in the sense of \cite[Definition 2.1]{flpk3sm}) over a quasi-projective base curve $U$ is completely determined by its \emph{generalized functional invariant map} $U \to \calM_{M_n}$, where $\calM_{M_n}$ denotes the moduli space of $M_n$-polarized K3 surfaces. The case $n = 1$ is excluded here, as it is complicated by the fact that $M_1$-polarized K3 surfaces admit an antisymplectic involution that fixes the $M_1$-polarization. This means that the analogue of Theorem  \ref{genfuninv} does not hold for $M_1$-polarized families of K3 surfaces (see Remark \ref{rem:M1}); we therefore primarily restrict our attention to the cases where $n \geq 2$. It should, however, be noted that several Calabi-Yau threefolds fibred by $M_1$-polarized K3 surfaces are known to exist (see \cite[Theorem 5.20]{flpk3sm}), but no classification for them is currently known; the authors intend to address this in future work.

Following on from this, the main results of the paper (Theorems \ref{thm:H1CY3} and \ref{thm:rigid}) show that a Calabi-Yau threefold may only admit a non-isotrivial fibration by $M_n$-polarized K3 surfaces if 
\[n \in \{1,2,3,4,5,6,7,8,9,10,11,12,13,14,15,16,17,19,23\}\] 
and, moreover, that the resulting Calabi-Yau threefolds are rigid unless 
\[n \in \{1,2,3,4,5,6,7,8,9,11\}.\]
In addition to these, Theorem \ref{thm:partitions} places strict limits on the possible generalized functional invariant maps that may arise for each $n$.

In the final part of this paper we study the converse statement, and ask which of the cases allowed by the theorems mentioned above can actually be realized by examples. Our main result here (Theorem \ref{thm:CY3}) constructs explicit birational models for all such Calabi-Yau threefolds, and we compute Hodge numbers in all cases. This gives a substantial extension of the results of \cite{cytfmqk3s}.

\subsection{Structure of the paper}

This paper is structured as follows. In Section \ref{sec:latticebound} we begin by proving Theorem \ref{genfuninv}, which shows that any non-isotrivial $M_n$-polarized family of K3 surfaces, for $n \geq 2$, is uniquely determined by its generalized functional invariant map. Then we prove a Hodge-theoretic version of the same result: that the transcendental variation of Hodge structure associated to an $M_n$-polarized family of K3 surfaces is the pull-back of a fundamental variation of Hodge structure $\VV_n^+$ from the moduli space $\calM_{M_n}$ of $M_n$-polarized K3 surfaces. Following on from this, we construct the variations of Hodge structure $\VV_n^+$ explicitly and study some of their properties. This allows us to prove Theorems \ref{thm:H1CY3} and \ref{thm:rigid}, which show that a Calabi-Yau threefold may only admit a non-isotrivial $M_n$-polarized K3 fibration if $n$ is one of a small number of possibilities. Finally, we use results from \cite{hnpfe} to prove Theorem \ref{thm:partitions}, which places severe restrictions on the possible generalized functional invariants that may give rise to Calabi-Yau threefolds.

In Section \ref{sec:modularfamilies} we study the converse problem, and attempt to construct explicit examples of Calabi-Yau threefolds realizing the possibilities allowed by our earlier results. By  Theorem \ref{genfuninv}, these are all birational to pull-backs of certain fundamental families $\calX_n \to \calM_{M_n}$ under the generalized functional invariant map. Section \ref{sec:modularfamilies} is dedicated to a detailed study of these fundamental families $\calX_n$; in particular, we conduct a careful examination of their singular fibres and resolutions.

Finally, Section \ref{sec:constCY3} applies the results of the previous sections to the explicit construction of Calabi-Yau threefolds. The main result here is Theorem \ref{thm:CY3}, which shows that all of the generalized functional invariants listed in Theorem \ref{thm:partitions} actually give rise to (possibly mildly singular) Calabi-Yau threefolds fibred by $M_n$-polarized K3 surfaces. Following on from this, Propositions \ref{prop:h11} and \ref{prop:h21} explicitly compute the Hodge numbers of these Calabi-Yau threefolds.

\begin{remark} Throughout this paper, a \emph{Calabi-Yau threefold} will always be a projective threefold $\calX$ with trivial canonical sheaf $\omega_{\calX} \cong \calO_{\calX}$ and $H^1(\calX,\calO_{\calX}) = 0$. Unless otherwise specified, we do not necessarily assume that $\calX$ is smooth, but we do assume that any singularities it has are $\QQ$-factorial and terminal.
\end{remark} 

\noindent \textbf{Acknowlegements.} The computations of singularity types and explicit resolutions of singularities found in this paper were greatly assisted by the \textsc{Singular} computer algebra system \cite{singular}.

\section{A bound on possible lattices}\label{sec:latticebound}

The aim of this section is to place restrictions on the possible lattices $M_n$ that may polarize the fibres in a Calabi-Yau threefold fibred by $M_n$-polarized K3 surfaces. We will begin by establishing some general results that apply to any threefold fibred by $M_n$-polarized K3 surfaces, then specialize to the Calabi-Yau case in Section \ref{sec:CYbound}.

We begin by setting up some notation. Let $\calX$ be a projective threefold with at worst terminal singularities, that admits a fibration $\pi\colon \calX \to B$ by K3 surfaces over a smooth base curve $B$. Let $X_p$ denote the fibre of $\pi$ over $p \in B$ and let $\NS(X_p)$ denote the N\'{e}ron-Severi group of $X_p$. We will assume that, for general choice of $p$, we have $\NS(X_p) \cong M_n$, where $M_n$ denotes the rank $19$ lattice $M_n := H \oplus E_8 \oplus E_8 \oplus \langle -2n \rangle$ (here $H$ denotes the hyperbolic plane lattice and $E_8$ denotes the negative definite $E_8$ root lattice). Let $M_n^{\perp} = H \oplus \langle 2n \rangle$ denote the orthogonal complement of $M_n$ in the K3 lattice $\Lambda_{\mathrm{K3}} := H^{\oplus 3} \oplus E_8^{\oplus 2}$.

As any singularities of $\calX$ are terminal, they must be isolated points by \cite[Corollaries 5.38 and 5.39]{bgav}. Let $U \subset B$ denote the open set over which $\calX$ is smooth and the fibres of $\pi$ are smooth K3 surfaces, let $j\colon U \hookrightarrow B$ denote the natural embedding, and let $\pi^U\colon \calX^U \to U$ denote the restriction of $\calX$ to $U$. Suppose further that $\pi^U\colon \calX^U \to U$ is an $M_n$-polarized family of K3 surfaces in the sense of the following definition.

\begin{defn}\label{defn:L-pol} \textup{\cite[Definition 2.1]{flpk3sm}}
Let $L \subseteq \Lambda_{\mathrm{K3}}$ be a lattice and $\pi^U\colon \mathcal{X}^U \rightarrow U$ be a smooth projective family of K3 surfaces over a smooth quasiprojective base $U$. We say that $\mathcal{X}^U$ is an \emph{$L$-polarized family of K3 surfaces} if
\begin{itemize}
\item there is a trivial local subsystem $\calL$ of $R^2\pi_*\mathbb{Z}$ so that, for each $p \in \Delta^*$, the fibre $\calL_p \subset H^2(X_p,\mathbb{Z})$ of $\calL$ over $p$ is a primitive sublattice of $\NS(X_p)$ that is isomorphic to $L$, and
\item there is a line bundle $\calA$ on $\calX^U$ whose restriction $\calA_p$ to any fibre $X_p$ is ample with first Chern class $c_1(\calA_p)$ contained in $\calL_p$ and primitive in $\NS(X_p)$.
\end{itemize}
\end{defn}

In general, we will call such $\calX$ a \emph{threefold fibred by $M_n$-polarized K3 surfaces}. If, in addition, the family $\pi^U\colon \calX^U \to U$ is not isotrivial, we will call  $\calX$ a \emph{threefold fibred non-isotrivially by $M_n$-polarized K3 surfaces}.

\subsection{Modular families and the generalized functional invariant} \label{sec:modularinvariants}

Results of Dolgachev \cite{mslpk3s} show that the moduli space of $M_n$-polarized K3 surfaces is isomorphic to a dense open set in a certain modular curve. This modular curve admits a standard compactification, obtained by adding finitely many points (called \emph{cusps}), which in turn gives rise to a compactification of the moduli space of $M_n$-polarized K3 surfaces. We henceforth denote this compactified moduli space by $\calM_{M_n}$; full details of its construction are given in Section \ref{sec:modularcurves}, below.

To any $M_n$-polarized family of K3 surfaces  $\pi^U\colon \mathcal{X}^U \rightarrow U$, we may associate a \emph{generalized functional invariant map} $g \colon U \to \calM_{M_n}$, defined to be the map which takes a point $p \in U$ to the point in moduli corresponding to the fibre $X_p$. The generalized functional invariant map is important because of the following theorem.

\begin{theorem} \label{genfuninv} Suppose $n \geq 2$. Let $\pi^U\colon \calX^U \to U$ denote a non-isotrivial $M_n$-polarized family of K3 surfaces over a quasi-projective curve $U$, such that the N\'{e}ron-Severi group of a general fibre of $\calX^U$ is isomorphic to $M_n$. Then $\pi^U\colon \calX^U \to U$ is uniquely determined  \textup{(}up to isomorphism\textup{)} by its generalized functional invariant map $g \colon U \to \calM_{M_n}$.
\end{theorem}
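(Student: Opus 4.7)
The strategy is to combine the Torelli theorem for polarized K3 surfaces with a rigidity argument specific to the hypothesis $n \geq 2$. Given two $M_n$-polarized families $\pi_i \colon \calX_i^U \to U$ (for $i = 1, 2$) with identical generalized functional invariants, the moduli-theoretic definition of $\calM_{M_n}$ together with the global Torelli theorem for $M_n$-polarized K3 surfaces immediately implies that the fibres $X_{1,p}$ and $X_{2,p}$ are isomorphic as $M_n$-polarized K3 surfaces for each $p \in U$. The content of the theorem is then that these fibrewise isomorphisms can be assembled into a global isomorphism of families.

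The key input for globalization is a rigidity lemma: for $n \geq 2$, a general $M_n$-polarized K3 surface has trivial automorphism group as an $M_n$-polarized surface. Any such automorphism $\phi$ acts as the identity on the embedded copy of $M_n \subset \NS(X)$, so $\phi^*$ restricts to a Hodge isometry of the transcendental lattice $M_n^\perp = H \oplus \langle 2n \rangle$. For a general point of $\calM_{M_n}$, the only Hodge isometries of $M_n^\perp$ are $\pm\mathrm{id}$; the $+\mathrm{id}$ case forces $\phi = \mathrm{id}$ by the strong form of Torelli for K3 surfaces, so the argument reduces to excluding the $-\mathrm{id}$ case. This is precisely where the hypothesis $n \geq 2$ enters: for $n = 1$, the $-\mathrm{id}$ isometry \emph{is} realized by an anti-symplectic involution preserving the polarization (cf.\ Remark \ref{rem:M1}), whereas for $n \geq 2$ one must show that no such geometric involution exists on a generic fibre, using the specific form of the transcendental lattice.

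Given the rigidity lemma, each fibrewise polarized isomorphism between general fibres is unique. I would then form the $U$-scheme $\mathrm{Isom}_U^{M_n}(\calX_1^U, \calX_2^U)$ parametrizing isomorphisms compatible with the $M_n$-polarization; fibrewise existence combined with uniqueness shows that this scheme admits a unique section over a dense open $U' \subset U$, yielding an isomorphism $\calX_1^{U'} \cong \calX_2^{U'}$ over $U'$. This extends to an isomorphism over all of $U$ by applying the valuative criterion of properness to the smooth proper families $\calX_i^U \to U$, using that isomorphisms of polarized K3 surfaces extend uniquely across points of a smooth curve where both families remain smooth.

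The principal obstacle is the rigidity step---specifically, verifying that for $n \geq 2$ no geometric automorphism of a generic $M_n$-polarized K3 surface induces $-\mathrm{id}$ on $M_n^\perp$. This is the essential new lattice-theoretic input beyond the $n = 2$ case treated in \cite{cytfmqk3s}, and I expect it to be handled either by a direct analysis of the geometry of generic $M_n$-polarized K3 surfaces (exhibiting an explicit obstruction such as an ample class that could not be preserved), or by examining the extension of the isometry $(\mathrm{id}_{M_n}, -\mathrm{id}_{M_n^\perp})$ to $\Lambda_{\mathrm{K3}}$ and showing the extended isometry fails to preserve the Hodge decomposition for generic period.
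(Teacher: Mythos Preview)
Your approach is correct and is essentially the argument the paper invokes by reference to \cite[Theorem 2.3]{cytfmqk3s}. The paper's own proof is a one-line reduction: the $n=2$ argument goes through verbatim once one observes that the discriminant group $A_{M_n^\perp} \cong \ZZ/2n\ZZ$. Your reconstruction of that argument---Torelli plus rigidity of polarized automorphisms plus globalization via the Isom scheme---is exactly the shape of the cited proof.

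One point to sharpen: in your final paragraph you suggest showing that the extension of $(\mathrm{id}_{M_n}, -\mathrm{id}_{M_n^\perp})$ to $\Lambda_{\mathrm{K3}}$ ``fails to preserve the Hodge decomposition for generic period.'' This is not quite the mechanism. If such an extension existed it \emph{would} preserve the Hodge decomposition, since $-\mathrm{id}$ on $M_n^\perp$ trivially respects the period line. The correct obstruction is that the pair $(\mathrm{id}_{M_n}, -\mathrm{id}_{M_n^\perp})$ does not extend to an isometry of $\Lambda_{\mathrm{K3}}$ at all when $n \geq 2$: by Nikulin's gluing criterion, extension requires the induced actions on $A_{M_n} \cong A_{M_n^\perp}$ to agree, i.e.\ $-\mathrm{id} = \mathrm{id}$ on $\ZZ/2n\ZZ$, which holds only for $n=1$. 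Equivalently, $-\mathrm{id} \notin \mathrm{O}(M_n^\perp)^*$ for $n \geq 2$. This is precisely the ``discriminant lattice $\cong \ZZ/2n\ZZ$'' observation the paper singles out, and it finishes your rigidity step cleanly.
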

\begin{proof} This was proved for $n = 2$ in \cite[Theorem 2.3]{cytfmqk3s}. The result in the general case follows by the same argument, after noting that the discriminant group $A_{M_n^{\perp}}$ is isomorphic to $\ZZ/2n\ZZ$.\end{proof}

\begin{remark} \label{rem:M1} In the case $n=1$, the existence of an antisymplectic involution fixing the $M_1$-polarization on an $M_1$-polarized K3 surface gives rise to several pathologies, one of which is that this result does not hold. For this reason, in this paper we will always restrict our attention to the cases where $n \geq 2$, unless otherwise stated. More details about the case $n=1$ may be found in \cite[Section 5.5]{flpk3sm}; it is our intention to address this case in more detail in future work.
\end{remark}

A consequence of this theorem is that any $M_n$-polarized family of K3 surfaces $\pi^U\colon \calX^U \to U$, for $n \geq 2$, is the pull-back of a fundamental modular family $\calX_n \to \calM_{M_n}$ under the generalized functional invariant map. 

These modular families $\calX_n$ may be constructed as follows. Clingher and Doran \cite{milpk3s} proved that there is a family of K3 surfaces written as resolutions of singular quartics in $\mathbb{P}^3$
\[X(a,b,d):= \{y^2 zw - 4x^3z + 3axzw^2 - \tfrac{1}{2}(dz^2w^2 + w^4) + bzw^3 = 0\},\]
with parameters $a,b,d \in \CC$. For two sets of parameters $(a,b,d)$ and $(a',b',d')$, the corresponding K3 surfaces are isomorphic via a projective transformation if and only if $(a,b,d)$ is equal to $(\lambda^2 a',\lambda^3 b',\lambda^6 d')$ for some $\lambda \in \CC$, or in other words, if $(a,b,d)$ and $(a',b',d')$ correspond to the same point in the weighted projective surface $\mathbb{WP}(2,3,6) \cong \mathbb{P}^2$. Furthermore, every K3 surface in the above form admits a polarization by the lattice $M = E_8 \oplus E_8 \oplus H$, and every $M$-polarized K3 surface can be expressed in the above form. In other words, the family $X(a,b,d)$ defines a modular family of $M$-polarized K3 surfaces. The parameters $(a,b,d)$ may be expressed in terms of modular functions on the quasiprojective variety $\mathbb{H}^2 / (\mathrm{PSL}_2(\mathbb{Z})^2 \rtimes \mathbb{Z}/2\ZZ)$.

In particular, if $X$ is a K3 surface which is lattice polarized by the lattice $M_n = M \oplus \langle -2n \rangle$, then $X$ can be expressed as a resolution of one of the surfaces $X(a,b,d)$. One may ask whether the subvarieties of the $(a,b,d)$ domain corresponding to $M_n$-polarized K3 surfaces are indeed the moduli spaces $\mathcal{M}_{M_n}$. There is a map from $\mathcal{M}_{M_n}$ to $\mathbb{WP}(2,3,6)$, whose degree is given by the index of the subgroup of $\mathrm{O}(\Lambda_{\mathrm{K3}})$ which acts as the identity on $M_n$ inside the subgroup of $\mathrm{O}(\Lambda_{\mathrm{K3}})$ which acts as the identity on $M$ and sends $M_n^\perp$ to itself. The following result implies that this index is $1$.

\begin{lemma}\label{lemma:lat}
Suppose $n \geq 2$. If  $\gamma \in \mathrm{O}(\Lambda_{\mathrm{K3}})$ acts as the identity on the lattice $M$ and sends $M_n^\perp$ to itself, then $\gamma$ acts as the identity on $M_n$.
\end{lemma}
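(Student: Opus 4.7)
The plan is to leverage the fact that $M \cong H \oplus E_8 \oplus E_8$ is an even unimodular lattice, so that its primitive embedding into $\Lambda_{\mathrm{K3}}$ splits as an orthogonal direct sum $\Lambda_{\mathrm{K3}} = M \oplus M^\perp$, with $M^\perp \cong H \oplus H$ the unique even unimodular lattice of signature $(2,2)$. Any $\gamma \in \mathrm{O}(\Lambda_{\mathrm{K3}})$ acting as the identity on $M$ therefore decomposes as $\gamma = \mathrm{id}_M \oplus \gamma'$ for some $\gamma' \in \mathrm{O}(M^\perp)$. Realizing the $\langle -2n \rangle$ factor of $M_n$ as $\ZZ e$ for a primitive vector $e \in M^\perp$ with $e^2 = -2n$, one has $M_n^\perp = e^\perp \cap M^\perp$ sitting inside $M^\perp$.

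Since $\gamma'$ preserves the rank-three sublattice $M_n^\perp$ of the unimodular rank-four lattice $M^\perp$, it must also preserve its rank-one orthogonal complement, which equals $\ZZ e$ because $e$ is primitive. Hence $\gamma'(e) = \pm e$, and the problem reduces to ruling out the possibility $\gamma'(e) = -e$ under the hypothesis $n \geq 2$. The natural way to do this is to pass to the induced actions on discriminant groups: because $\gamma$ preserves both $M_n$ and $M_n^\perp$ as primitive sublattices of $\Lambda_{\mathrm{K3}}$, the gluing (anti-)isomorphism between $A_{M_n} \cong \ZZ/2n\ZZ$ and $A_{M_n^\perp} \cong \ZZ/2n\ZZ$ must intertwine the two induced actions. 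The case $\gamma'(e) = -e$ produces multiplication by $-1$ on $A_{M_n}$, which is non-trivial precisely when $n \geq 2$; this is the unique point where the hypothesis $n \geq 2$ enters. The remaining work is to combine this discriminant-level constraint with the explicit shape $M_n^\perp = H \oplus \langle 2n \rangle$ and the requirement that $\gamma'$ be an integral isometry of $M^\perp$ to derive a contradiction.

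The main obstacle I anticipate is exactly this last step. A naive candidate such as $-\mathrm{id}_{M^\perp}$ superficially sends $e \mapsto -e$ and preserves $M_n^\perp$, so the argument must invoke a finer lattice-theoretic input---presumably coming from the interplay of the gluing inside $M^\perp$ of $\ZZ e$ with $M_n^\perp$---in order to rule all such candidates out. I would expect the proof to proceed exactly as in the $n = 2$ case of \cite[Theorem 2.3]{cytfmqk3s}, with $\ZZ/2n\ZZ$ replacing $\ZZ/4\ZZ$ throughout the discriminant computation.
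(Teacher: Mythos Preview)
Your reduction is exactly the argument the paper has in mind: since $M$ is unimodular one has $\Lambda_{\mathrm{K3}}=M\oplus M^{\perp}$ with $M^{\perp}\cong H\oplus H$, so $\gamma=\mathrm{id}_M\oplus\gamma'$ with $\gamma'\in\mathrm{O}(M^{\perp})$; as $\gamma'$ preserves $M_n^{\perp}$ it preserves its rank-one orthogonal complement $\ZZ e$ in $M^{\perp}$, whence $\gamma'(e)=\pm e$; and Nikulin's gluing (Corollary~1.5.2, the reference the paper cites) forces the induced actions of $\gamma'$ on $A_{\ZZ e}\cong\ZZ/2n\ZZ$ and on $A_{M_n^{\perp}}\cong\ZZ/2n\ZZ$ to agree under the gluing anti-isometry.

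The obstacle you flag in your last paragraph, however, is not a missing refinement---it is a genuine counterexample. The element $\gamma_0=\mathrm{id}_M\oplus(-\mathrm{id}_{M^{\perp}})$ lies in $\mathrm{O}(\Lambda_{\mathrm{K3}})$, fixes $M$ pointwise, preserves $M_n^{\perp}$, and sends $e\mapsto -e$; no finer lattice input will exclude it. What the Nikulin argument actually yields is a dichotomy: $\gamma'(e)=e$ forces $\gamma'|_{M_n^{\perp}}\in\mathrm{O}(M_n^{\perp})^{*}$, while $\gamma'(e)=-e$ forces $\gamma'|_{M_n^{\perp}}\in(-\mathrm{id})\cdot\mathrm{O}(M_n^{\perp})^{*}$, and for $n\geq 2$ these cosets are distinct (this is where $n\geq 2$ enters, exactly as you say). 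Hence the index in question is~$2$, not~$1$, and the lemma as literally stated is false.

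The intended application---that $\calM_{M_n}$ maps with degree~$1$ onto its image in $\WP(2,3,6)$---is nonetheless correct, because the extra coset is represented by $-\mathrm{id}_{M_n^{\perp}}$, which acts trivially on the projectivized period domain $\calD_{M_n}\subset\PP(M_n^{\perp}\otimes\CC)$. So the two groups have the same image in $\mathrm{O}(M_n^{\perp})/\{\pm\mathrm{id}\}$ and hence give the same quotient of $\calD_{M_n}$. You should therefore not look for a way to ``rule out'' $\gamma_0$; rather, reformulate the statement (e.g.\ $\gamma$ acts on $\ZZ e$ as $\pm\mathrm{id}$, with the minus sign occurring only via $-\mathrm{id}_{M^{\perp}}$) and then observe that this suffices for the moduli-theoretic conclusion.
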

\begin{proof}This is a simple exercise in lattice theory using \cite[Corollary 1.5.2]{isbfa}.\end{proof}

Therefore, the curves in $\mathbb{WP}(2,3,6)$ which support $M_n$-polarized K3 surfaces may be identified with the curves $\mathcal{M}_{M_n}$ whenever $n \geq 2$. So for $n \geq 2$ we may define a family $\calX_n$ of K3 surfaces over $\calM_{M_n}$ to be a maximal crepant resolution of the pull-back of the family $X(a,b,d)$ by the map $\calM_{M_n} \to \mathbb{WP}(2,3,6)$ (note that the general fibre of $\calX_n$ is smooth, but the total space may still be singular). By construction, the fibres of $\calX_n$ are smooth K3 surfaces admitting $M_n$-polarizations and the period map is of degree $1$ onto the moduli space of $M_n$-polarized K3 surfaces. The fact that $\calX_n$ defines an $M_n$-polarized family, in the sense of Definition \ref{defn:L-pol}, then follows from Lemma \ref{lemma:lat}.

We will compute some of the families $\calX_n$ explicitly, for small values of $n$, in Section \ref{sec:modularfamilies}; but in general this is difficult to do. Therefore, in order to deduce information about $M_n$-polarized families of K3 surfaces $\pi^U\colon \calX^U \to U$ for arbitrary $n \geq 2$, we instead turn to Hodge theory.

\subsection{Some Hodge theory} The family $\pi^U\colon \calX^U \to U$ determines a weight $2$ variation of polarized Hodge structure $\calH_{\calX}$ on $R^2\pi^U_* \mathbb{Z}$. Following the discussion of \cite[Section 2.1]{flpk3sm}, define $\calT(\calX^U)$ to be the integral local subsystem of  $R^2\pi^U_* \mathbb{Z}$ underlying the smallest integral sub-variation of Hodge structure of $\calH_{\calX}$ containing $\calH^{2,0}_{\calX} := \calF^2(R^2\pi^U_* \mathbb{Z})$. The local system $\calT(\calX^U)$ thus supports a variation of polarized Hodge structure, with polarization induced from $\calH_{\calX}$ (this variation of Hodge structure is the ``essential part'' of $\calH_{\calX}$, as defined by Saito and Zucker \cite[Section 4]{cnrfk3sftat}).

Let $\calN\calS(\calX^U)$ denote the orthogonal complement of $\calT(\calX^U)$ in $R^2\pi^U_* \mathbb{Z}$. Then we have a splitting of $R^2\pi^U_*\QQ$ into a direct sum of two irreducible $\QQ$-local systems
\begin{equation} \label{eq:splitting}R^2\pi^U_*\QQ = \left(\calN\calS(\calX^U) \oplus \calT(\calX^U)\right) \otimes \QQ.\end{equation}
Here $\calN\calS(\calX^U)$ can be interpreted as those classes which lie in $\NS(X_p)$ for all $p \in U$; by assumption, in our setting $\calN\calS(\calX^U)$ is a trivial local system of rank $19$. Therefore, all of the important information is contained in the local system $\calT(\calX^U)$, which has rank $3$ and supports a variation of Hodge structure of type $(1,1,1)$.

The direct image $j_*\calT(\calX_U)$ of this sheaf on $B$ admits a Hodge structure of weight $3$ on its cohomology $H^1(B,j_*\calT(\calX_U)\otimes \QQ)$, defined as follows.  Indeed, suppose that $\VV$ is any $\QQ$-local system defined on a Zariski open set $j\colon U \hookrightarrow B$, which supports a variation of Hodge structure of weight $k$. Following Deligne \cite{tdh}, the associated Hodge filtration $\calF^i$ may be extended to coherent sheaves $\widetilde{\mathcal{F}}^i$ over the entire curve $B$ in a (quasi-)canonical manner. Zucker \cite{htdcl2cpm} then shows how to use the sheaves $\widetilde{\mathcal{F}}^i$ to equip the cohomology group $H^1(B,j_*\VV)$ with a pure polarizable Hodge structure of weight $k+1$. 

Our main use for this Hodge structure arises from the following result, which allows us to deduce information about the Hodge structure on $H^3(\calX,\CC)$ from that on $H^1(U,j_*\calT(\calX^U) \otimes \CC)$. In particular, it will allow us to compute the geometric genus of $\calX$, which we can use to restrict when $\calX$ may be Calabi-Yau.

\begin{lemma} \label{lem:hodgeinjection} Let $\pi\colon \calX \to B$ be a smooth threefold and suppose that the restriction $\pi^U\colon \calX^U \to U$ of $\pi$ to a Zariski open set $U \subset B$ is an $L$-polarized family of K3 surfaces, in the sense of Definition \ref{defn:L-pol}, where $L$ is the N\'{e}ron-Severi lattice of a general fibre of $\calX^U$. Then we have an injective morphism of Hodge structures
\[H^1(U,j_*\calT(\calX^U) \otimes \CC) \hookrightarrow H^3(\calX,\CC),\]
the cokernel of which is supported in the $(1,2)$ and $(2,1)$ components. In particular, $h^{3,0}$ of the Hodge structure on $H^1(U,j_*\calT(\calX^U))$ is equal to $h^{3,0}(\mathcal{X})$.
\end{lemma}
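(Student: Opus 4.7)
My plan is to analyse the Leray spectral sequence for $\pi\colon \calX \to B$ in conjunction with the BBD decomposition theorem (available because $\calX$ is smooth projective and $\pi$ is proper, with Hodge compatibility supplied by Saito's theory of mixed Hodge modules or, equivalently, Zucker's construction). Since the generic fibre is a K3 surface, $R^1\pi^U_*\CC = R^3\pi^U_*\CC = 0$ on $U$, so $R^1\pi_*\CC$ and $R^3\pi_*\CC$ are skyscrapers concentrated on the finite discriminant $B \setminus U$. On the $E_2$-page computing $H^3(\calX,\CC)$ this forces $E_2^{2,1} = 0$ (curve cohomology of a skyscraper above its support dimension) and $E_2^{3,0} = 0$ (dimension of $B$), leaving only $E_2^{1,2} = H^1(B, R^2\pi_*\CC)$ and the skyscraper $E_2^{0,3} = H^0(B, R^3\pi_*\CC)$. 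The decomposition theorem further splits the Leray filtration on $H^3(\calX,\QQ)$ as a direct sum of sub-Hodge structures.

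I would then combine the splitting (\ref{eq:splitting}) with the decomposition theorem at the level of $R^2\pi_*\QQ$ to obtain
$$R^2\pi_*\QQ \cong j_*\calN\calS(\calX^U) \oplus j_*\calT(\calX^U) \oplus \calS,$$
for some skyscraper sheaf $\calS$ on $B \setminus U$. Since $H^1(B,\calS) = 0$, this gives
$$H^3(\calX,\CC) \cong H^1(B, j_*\calT(\calX^U) \otimes \CC) \oplus H^1(B, j_*\calN\calS(\calX^U) \otimes \CC) \oplus H^0(B, R^3\pi_*\CC),$$
realising the required injection as the inclusion of the transcendental summand. It then remains to show that both cokernel summands have Hodge type only $(2,1) + (1,2)$.

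The N\'eron--Severi summand is easy: $\calN\calS(\calX^U)$ is a trivial local system with fibres of pure type $(1,1)$, so $j_*\calN\calS$ is constant and $H^1(B, j_*\calN\calS(\calX^U) \otimes \CC) \cong H^1(B,\CC) \otimes \calN\calS(\calX^U)$ produces only $(2,1) + (1,2)$ pieces. The main obstacle is the skyscraper summand $H^0(B, R^3\pi_*\CC)$: purity gives weight $3$, but $(3,0)$ and $(0,3)$ content must still be ruled out. My preferred route is to identify $F^3 H^3(\calX,\CC) = H^0(\calX, \Omega^3_\calX)$ directly. Over $\calX^U$ the relative cotangent sequence yields $\Omega^3_{\calX^U} \cong \pi^{U,*}\Omega^1_U \otimes \Omega^2_{\calX^U/U}$; the pushforward $\pi^U_*\Omega^2_{\calX^U/U}$ is the top Hodge bundle $\calF^2(R^2\pi^U_*\CC)$, which sits inside $\calT(\calX^U) \otimes \calO_U$ by the definition of $\calT$. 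Zucker's construction identifies $F^3 H^1(B, j_*\calT(\calX^U))$ precisely with the appropriately extended global sections of $\Omega^1_U \otimes \calF^2$, so every global holomorphic $3$-form on $\calX$ already lives in the transcendental summand; complex conjugation then kills the $(0,3)$ content of the skyscraper as well, giving $h^{3,0}(\calX) = h^{3,0}\bigl(H^1(B, j_*\calT(\calX^U))\bigr)$. The delicate step is controlling the extension of forms across singular fibres; Saito's mixed Hodge module theory provides an alternative via Hodge-type bounds on the vanishing cycles at each singular point, but this feels heavier than the direct differential-form argument.
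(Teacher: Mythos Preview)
Your approach is correct, but it diverges from the paper's and, at the key step, works harder than necessary.

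Both arguments begin with Leray. The paper, however, invokes only Zucker's $E_2$-degeneration (not the full decomposition theorem) to obtain the short exact sequence of Hodge structures
\[0 \longrightarrow H^1(B,R^2\pi_*\CC) \longrightarrow H^3(\calX,\CC) \longrightarrow H^0(B,R^3\pi_*\CC) \longrightarrow 0,\]
and then identifies $H^1(B,R^2\pi_*\CC)$ with $H^1(B,j_*R^2\pi^U_*\CC)$ via Zucker's surjection $R^2\pi_*\CC \twoheadrightarrow j_*R^2\pi^U_*\CC$ with skyscraper kernel. The substantive difference is in the treatment of the skyscraper term $H^0(B,R^3\pi_*\CC)$. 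You propose to show it carries no $(3,0)$ by identifying $F^3H^3(\calX)=H^0(\calX,\Omega^3_\calX)$ directly with the transcendental summand via the relative cotangent sequence and Zucker's description of $F^3H^1(B,j_*\calT)$; you correctly flag that controlling the extension of forms across the singular fibres is the delicate point, and reach for Saito's theory as a backstop. The paper avoids this entirely with a short geometric observation: Zucker's analysis gives an injective morphism of Hodge structures
\[H^0(B,R^3\pi_*\CC) \hookrightarrow \bigoplus_i H^3(S_i,\CC),\]
where the $S_i$ are the irreducible components of the singular fibres. Each $S_i$ is a surface, so $H^3(S_i,\CC)$ is concentrated in types $(1,2)$ and $(2,1)$, and the claim follows immediately. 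Your route works and is more structural, but the paper's is considerably lighter: no extension argument, no mixed Hodge modules, just the Hodge type of $H^3$ of a surface.
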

\begin{proof} Zucker \cite[Corollary 15.15]{htdcl2cpm} has proved that, in our setting, the Leray spectral sequence degenerates at the $E_2$ level. Since the generic fibre of $\pi$ is a K3 surface, we have $H^2(B,R^1\pi_*\CC) = 0$, so the Leray spectral sequence gives an exact sequence of morphisms of Hodge structures
\[0 \longrightarrow H^1(B,R^2\pi_*\CC) \longrightarrow H^3(\calX,\CC) \longrightarrow H^0(B,R^3\pi_*\CC) \longrightarrow 0.\]
Now, \cite[Proposition 15.12]{htdcl2cpm} gives a surjective map of sheaves
\[R^2\pi_*\CC \longrightarrow j_*R^2\pi^U_*\CC,\]
whose kernel is a skyscraper sheaf supported on $B \setminus U$. We thus have an isomorphism of cohomology $H^1(B,R^2\pi_*\CC) \cong H^1(B,j_*R^2\pi^U_*\CC)$ and, by Equation \eqref{eq:splitting}, the latter group is isomorphic to $H^1(U,j_*\calT(\calX_U) \otimes \CC)$, since our assumptions imply that $\calN\calS(\calX^U)$ is a trivial local system. This proves the first part of the lemma.

To prove the second part note that, by the exact sequence above, the cokernel of this morphism is $H^0(B,R^3\pi_*\CC)$. The Hodge structure on this group is analysed in detail by Zucker \cite[Section 15]{htdcl2cpm}. It follows from this analysis (for details, see the proof of \cite[Lemma 3.4]{cytfmqk3s}) that there is an injective morphism of Hodge structures of weight $3$
\[H^0(B,R^3\pi_*\CC) \hookrightarrow \bigoplus H^3(S_i,\CC),\]
where the direct sum runs over the irreducible components $S_i$ of the singular fibres in $\calX$. Since these components are all surfaces, the Hodge structure on $H^3(S_i,\CC)$ is supported in the $(1,2)$ and $(2,1)$ components. This completes the proof.
\end{proof}

Given this, we wish to find a way to study the local system $\calT(\calX^U)$. The aim of the next subsection is to prove a Hodge theoretic version of Theorem \ref{genfuninv} which will do just that: we will prove that the variation of Hodge structure on $\calT(\calX^U)$ is obtained as the pull-back of a given variation of Hodge structure $\mathbb{V}_n^+$ on $\calM_{M_n}$ by the generalized functional invariant. To construct this variation of Hodge structure, we first examine the structure of $\calM_{M_n}$.

\subsection{Modular curves and variations of Hodge structure}\label{sec:modularcurves}

For each $n \geq 2$, we may build a variation of Hodge structure on $\calM_{M_n}$ in the following way. Begin by choosing a basis $(e,f,g)$ for $M_n^{\perp} = H \oplus \langle 2n \rangle$, so that $f$ is a generator of $\langle 2n \rangle$ and $(e,g)$ is a basis for $H$ with $e^2 = g^2 = 0$ and $e.g = -1$; we make this slightly unusual choice for compatibility with the results of Dolgachev \cite{mslpk3s}. In the basis $(e,f,g)$, the pairing on $M_n^{\perp}$ therefore has matrix
\begin{equation}\label{Mnpairing} \left( \begin{matrix} 0 & 0 & -1 \\ 0 & 2n & 0 \\ -1 & 0 & 0 \end{matrix}\right).
\end{equation}

Define the period domain
\[\mathcal{D}_n : = \{ z \in \mathbb{P}(M_n^\perp \otimes \mathbb{C}) : \langle z ,z \rangle = 0, \langle z, \overline{z} \rangle > 0 \}.\]
There is an action on $\mathcal{D}_n$ by the group $\mathrm{O}(M_n^\perp)^*$, defined to be the kernel of the natural morphism $\mathrm{O}(M_n^\perp) \to \Aut(A_{M_n^\perp})$, where $A_{M_n^\perp}$ denotes the discriminant group. $\mathrm{O}(M_n^\perp)^*$ is generated by the group $\mathrm{SO}(M_n^\perp)^* := \mathrm{O}(M_n^\perp)^* \cap \mathrm{SO}(M_n^\perp)$ and the matrix
\[\iota = \left( \begin{matrix} 0 & 0 & -1 \\ 0 & 1 & 0 \\ -1 & 0 & 0 \end{matrix}\right),\]
acting on the basis $(e,f,g)$. Results of Dolgachev \cite[Section 3]{mslpk3s} show that there is an isomorphism between the the moduli space of $M_n$-polarized K3 surfaces and a dense open set in $\calD_n/\mathrm{O}(M_n^{\perp})^*$.

Now let $\mathcal{F}^2$ be the tautological vector bundle $\mathcal{O}_{\mathbb{P}^2}(-1)$ restricted to $\mathcal{D}_n$, which is a sub-bundle of the vector bundle $\calF^0 := (M_n^{\perp}\otimes \mathcal{O}_{\mathbb{P}^2})|_{\mathcal{D}_n}$ on $\mathcal{D}_n$. Setting $\calF^1$ to be the fibrewise orthogonal complement of $\calF^2$ in $\calF^0$, we obtain a weight $2$ variation of Hodge structure $\calF^2 \subseteq \calF^1 \subseteq \calF^0$, which descends to variations of Hodge structure $\VV_n$ and $\VV_n^+$ on the quotients $\calD_n/\mathrm{SO}(M_n^{\perp})^*$ and $\calD_n/\mathrm{O}(M_n^{\perp})^*$ respectively. These two variations of Hodge structure are closely related: there is a double covering $\calD_n/\mathrm{SO}(M_n^{\perp})^* \to \calD_n/\mathrm{O}(M_n^{\perp})^*$, and $\VV_n$ is the pull-back of $\VV_n^+$ under this covering map.

The following should be thought of as a Hodge theoretic version of Theorem \ref{genfuninv}.

\begin{proposition}\label{prop:pullback}
Suppose $n \geq 2$. Let $\pi^U\colon \calX^U \to U$ be a non-isotrivial $M_n$-polarized family of K3 surfaces over a quasi-projective curve $U$, such that the N\'{e}ron-Severi group of a general fibre of $\calX^U$ is isomorphic to $M_n$. Then the variation of Hodge structure on $\calT(\calX^U)$  is the pullback of $\VV_n^+$ by the generalized functional invariant map $g\colon U \to \calM_{M_n}$.
\end{proposition}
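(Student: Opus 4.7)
The plan is to reduce the proposition to the special case of the fundamental modular family $\calX_n \to \calM_{M_n}$ and then identify its transcendental variation of Hodge structure with $\VV_n^+$ directly from the period construction.

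First I would invoke Theorem \ref{genfuninv}. Since the fundamental family $\calX_n$ is itself an $M_n$-polarized family (by the discussion following Lemma \ref{lemma:lat}) whose generalized functional invariant is the identity, the pullback $g^*\calX_n$ is an $M_n$-polarized family over $U$ with the same generalized functional invariant as $\calX^U$. By Theorem \ref{genfuninv} these two families are isomorphic. Since the construction of $\calT(-)$ is purely functorial with respect to pullback of smooth families, this yields $\calT(\calX^U) \cong g^*\calT(\calX_n)$, and it suffices to prove $\calT(\calX_n) \cong \VV_n^+$ on the open locus of $\calM_{M_n}$ parametrizing smooth fibres.

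For this identification I would argue fibrewise first. Over a point $[X] \in \calM_{M_n}$, the fibre of $\calT(\calX_n)$ is the transcendental lattice of $X$ with its Hodge structure. Because the N\'eron-Severi group of a general $M_n$-polarized K3 surface equals $M_n$, the transcendental lattice is the primitive embedding $M_n^{\perp} \hookrightarrow H^2(X,\ZZ)$ with its induced Hodge structure, which is exactly the Hodge structure parametrized by the corresponding point of $\calD_n$, i.e.\ the fibre of $\VV_n^+$; polarizations match by construction. To promote this to an isomorphism of local systems, I would show that the monodromy of $\calT(\calX_n)$ takes values in $\mathrm{O}(M_n^{\perp})^*$: because $\calX_n$ is $M_n$-polarized, monodromy acts trivially on $M_n \subset \Lambda_{\mathrm{K3}}$, and by Nikulin gluing (\cite[Corollary 1.5.2]{isbfa}, as used in Lemma \ref{lemma:lat}) any isometry of $\Lambda_{\mathrm{K3}}$ fixing $M_n$ pointwise acts on $M_n^{\perp}$ by an element of $\mathrm{O}(M_n^{\perp})^*$. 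Consequently, the fibrewise period data assemble into a VHS on $\calD_n/\mathrm{O}(M_n^{\perp})^* = \calM_{M_n}$, which is $\VV_n^+$ by the very definition of the latter.

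The main obstacle in this strategy is the monodromy step: one must ensure that the monodromy of $\calT(\calX_n)$ lies inside $\mathrm{O}(M_n^{\perp})^*$ rather than merely inside the larger group $\mathrm{O}(M_n^{\perp})$, so that $\calT(\calX_n)$ descends to the quotient $\calM_{M_n}$ and actually coincides with $\VV_n^+$, not with some finite cover of it. This is precisely the lattice-theoretic content of Lemma \ref{lemma:lat} (whose hypothesis $n \geq 2$ is the reason for the same restriction in the proposition); once granted, the remainder of the argument is essentially functorial bookkeeping already carried out by Theorem \ref{genfuninv}.
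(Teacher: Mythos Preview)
Your approach is correct but genuinely different from the paper's. The paper compares $\calT(\calX^U)$ and $g^*\VV_n^+$ directly: it takes an open cover of $U$ by simply connected sets, notes that over each set the two local systems are trivially isomorphic (same fibres, same Hodge filtration via the period map), and observes that any discrepancy in the transition data would have to be an element of $\mathrm{O}(M_n^\perp)^*$ preserving each fibrewise Hodge structure; for $n \geq 2$ no nontrivial such element exists, so the local systems agree. You instead factor through the universal case: $\calT(\calX^U) \cong g^*\calT(\calX_n)$ by Theorem~\ref{genfuninv}, and then argue $\calT(\calX_n)\cong\VV_n^+$. Your route is more conceptual and makes the role of the modular family explicit; the paper's is more self-contained because it never needs $\calX_n$ to exist as an actual family, only the abstractly defined $\VV_n^+$.

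There is, however, a small gap in your step 3, and your ``main obstacle'' paragraph slightly misidentifies it. Showing that the monodromy of $\calT(\calX_n)$ lands in $\mathrm{O}(M_n^\perp)^*$ gives only an \emph{upper} bound on the monodromy; it does not yet force the monodromy representation to coincide with the one defining $\VV_n^+$. Two local systems on $U_{M_n}$ with fibre $M_n^\perp$, monodromy in $\mathrm{O}(M_n^\perp)^*$, and identical period maps could in principle differ by a cocycle valued in the kernel of $\mathrm{O}(M_n^\perp)^* \to \Aut(\calD_n)$. What saves you is that this kernel is trivial for $n \geq 2$: the only isometries of $M_n^\perp$ acting trivially on $\calD_n$ are $\pm\mathrm{id}$, and $-\mathrm{id}$ acts nontrivially on $A_{M_n^\perp}\cong\ZZ/2n\ZZ$ when $2n>2$, hence $-\mathrm{id}\notin\mathrm{O}(M_n^\perp)^*$. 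This faithfulness is exactly the fact the paper invokes (phrased there as ``no such automorphism exists for $n>1$''), so your detour through $\calX_n$ does not avoid the core lattice input---it merely postpones it to the special case $g=\mathrm{id}$.
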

\begin{proof}  This proposition is proved in largely the same way as Theorem \ref{genfuninv} (and, by extension, \cite[Theorem 2.3]{cytfmqk3s}). Assume for a contradiction that the local systems $\calT(\calX^U)$ and $g^*(\VV_n^+)$ differ on $U$. Let $\{U_i\}$ denote an open cover of $U$, with each $U_i$ simply connected. As the local systems $\calT(\calX^U)$ and $g^*(\VV_n^+)$ are trivial on each $U_i$, they must differ in the way that they glue over the intersections $U_i \cap U_j$. Let $V$ be a connected component of such an intersection, so that the gluing maps differ over $V$. 

Over $V$, the two gluing maps must differ by a fibrewise automorphism in the group $\mathrm{O}(M_n^\perp)^*$. But it follows from a slight modification of the argument in \cite[Theorem 2.3]{cytfmqk3s} (see Theorem \ref{genfuninv}) that no such automorphism exists for $n > 1$. Thus $\calT(\calX^U)$ and $g^*(\VV_n^+)$ are isomorphic as local systems.

It remains to show that the variations of Hodge structure agree. To do this, it suffices to show that $g^*\calF^2(\VV^+) = \calF^2(\calT(\calX^U))$. But this is a standard property of the period map.
\end{proof}

In order to use this result to study $M_n$-polarized families of K3 surfaces, we need to understand the variations of Hodge structure $\VV_n^+$. As we shall see, it will turn out to be simpler to study the closely related variations of Hodge structure $\VV_n$, then use them to deduce the properties of $\VV_n^+$.

We begin by studying the quotients $\calD_n/\mathrm{SO}(M_n^{\perp})^*$ and $\calD_n/\mathrm{O}(M_n^{\perp})^*$. Dolgachev \cite{mslpk3s} has shown that  $\calD_n/\mathrm{O}(M_n^{\perp})^*$ may be realized as a certain modular curve. To define this curve we recall some standard notation. Let
\[\Gamma_0(n) :=  \left. \left\{ \gamma \in \text{SL}_2(\ZZ) : \gamma \equiv \left( \begin{matrix} * & * \\ 0 & * \end{matrix} \right) \bmod n \right\} \middle/ (\pm 1)  \subseteq \mathrm{PSL}_2(\ZZ) \right. .  \]
By convention, $\Gamma_0(1)$ is just the full modular group $\Gamma = \mathrm{PSL}_2(\ZZ)$. We also have  
\[\Gamma_0(n)^+:=  \Gamma_0(n) \cup \tau_n \Gamma_0(n) \subseteq \mathrm{PSL}_2(\RR)\] 
where 
\[\tau_n = \left( \begin{matrix} 0 & -1/\sqrt{n} \\ \sqrt{n} & 0 \end{matrix} \right)\]
is the \emph{Fricke involution}.

With this notation, define $Y_0(n) := \HH/\Gamma_0(n)$ and $Y_0(n)^+ := \HH/\Gamma_0(n)^+$, where $\HH$ denotes the upper half-plane in $\CC$, and let $i\colon Y_0(n) \hookrightarrow X_0(n) $ and $i^+\colon Y_0(n)^+ \hookrightarrow X_0(n)^+$ denote their compactifications. Dolgachev \cite[Theorem 7.1]{mslpk3s} has proved that there is an isomorphism $\calD_n/\mathrm{O}(M_n^{\perp})^* \cong Y_0(n)^+$ so, in particular, $X_0(n)^+$ defines a compactification $\calM_{M_n}$ of the moduli space of $M_n$-polarized K3 surfaces.

Following \cite[Section 7]{mslpk3s}, for $n \geq 2$ the quotient $\calD_n/\mathrm{SO}(M_n^{\perp})^*$ may also be presented as the modular curve $Y_0(n)$ in the following way. We begin with the well-known isomorphism
\[\mathrm{PSL}_2(\mathbb{R}) \longrightarrow \mathrm{PSO}(1,2).\]
In \cite[Remark 7.2]{mslpk3s}, Dolgachev presents this morphism as
\begin{align*}A\colon \mathrm{SL}_2(\RR) &\longrightarrow \mathrm{SO}(1,2)\\
\gamma = \left( \begin{matrix} a & b \\ c & d \end{matrix} \right) &\longmapsto A(\gamma):= \left( \begin{matrix}a^2 & -2nab & nb^2 \\ -ac/n & ad + bc & -bd \\ c^2/n & -2 cd & d^2\end{matrix}\right)\end{align*}
From this, we see that if $\gamma$ acts on $\RR^2$ with basis $(x,y)$, then $A(\gamma)$ is an expression for the action of $\gamma$ on $\mathrm{Sym}^2(\mathbb{R}^2)$ in terms of the basis $(x^2/n,-xy,y^2)$. Moreover, the matrix $A(\gamma)$ preserves the pairing \eqref{Mnpairing}, so $A(\gamma)$ defines an isometry of $M_n^{\perp} \otimes \RR$.

Let $\calD_n^+ \cong \HH$ denote a connected component of $\calD_n$ and let $\mathrm{SO}(M_n^{\perp})^{+}$ denote the index $2$ subgroup of $\mathrm{SO}(M_n^{\perp})^{*}$ that stabilizes $\calD_n^+$.
The map $A$ sends $\Gamma_0(n)$ to the group $\mathrm{SO}(M_n^\perp)^+$. A slight modification of the argument in \cite[Section 7]{mslpk3s} then shows that the quotient $\calD_n/\mathrm{SO}(M_n^{\perp})^* \cong \calD_n^+/\mathrm{SO}(M_n^{\perp})^{+}$ is isomorphic to the  modular curve $Y_0(n)$.

We may use this to explicitly construct the variation of Hodge structure $\VV_n$ on $\calD_n/\mathrm{SO}(M_n^{\perp})^* \cong Y_0(n)$.

\begin{proposition}\label{prop:symsquare}
The variation of Hodge structure $\mathbb{V}_n$ is the symmetric square of a polarizable weight one $\mathbb{Z}$-variation of Hodge structure $\VV_{\sqrt{n}}$ over $Y_0(n)$.
\end{proposition}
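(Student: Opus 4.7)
The plan is to prove the proposition by constructing $\VV_{\sqrt{n}}$ explicitly as the standard weight one variation on $Y_0(n)$ coming from its modular interpretation, and then matching $\mathrm{Sym}^2\VV_{\sqrt{n}}$ with $\VV_n$ piece by piece: underlying local system, Hodge filtration, and polarization. The essential algebraic content -- that $\mathrm{Sym}^2$ of the standard representation of $\mathrm{SL}_2$ realizes the exceptional isomorphism $\mathrm{PSL}_2(\RR) \cong \mathrm{PSO}(1,2)$ -- is exactly what appears in Dolgachev's computation recalled in the excerpt, so much of the work has already been set up.

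First, I would define $\VV_{\sqrt{n}}$: on the upper half-plane $\HH$, take the constant $\ZZ^2$-local system together with the weight one Hodge filtration whose $F^1$-line at $\tau \in \HH$ is spanned by $(\tau,1)^T \in \CC^2$, polarized by the standard symplectic form on $\ZZ^2$. Since $\Gamma_0(n) \subseteq \mathrm{SL}_2(\ZZ)$ preserves all of these structures via its standard representation, this descends to a polarizable weight one $\ZZ$-VHS on $Y_0(n) = \HH/\Gamma_0(n)$, which we take as $\VV_{\sqrt{n}}$. Equivalently, $\VV_{\sqrt{n}}$ can be described as $R^1\pi_*\ZZ$ for the universal elliptic curve over $Y_0(n)$, interpreted as a stack.

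Second, I would identify the underlying $\Gamma_0(n)$-representations. The local system $\mathrm{Sym}^2\VV_{\sqrt{n}}$ is associated to the symmetric square of the standard representation of $\mathrm{SL}_2(\ZZ)$, restricted to $\Gamma_0(n)$. By the formula recalled in the excerpt, this symmetric square, written in the basis $(x^2/n, -xy, y^2)$ of $\mathrm{Sym}^2(\RR^2)$, is exactly $A(\gamma)$. Since $A$ was shown in the excerpt to restrict to an isomorphism $\Gamma_0(n) \cong \mathrm{SO}(M_n^\perp)^+$ and to preserve the displayed Gram matrix of $M_n^\perp$ in the basis $(f, e, -g)$, this yields an identification of $\mathrm{Sym}^2\VV_{\sqrt{n}}$ with $\VV_n$ as $\QQ$-local systems with bilinear form. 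A fiberwise comparison then promotes this to an isomorphism of $\ZZ$-local systems: the quadratic form induced on $\mathrm{Sym}^2(\ZZ^2)$ by the standard symplectic form, once written in the basis $(x^2/n, -xy, y^2)$, is integrally isometric to $M_n^\perp = H \oplus \langle 2n \rangle$ in the basis $(f, e, -g)$. The factor of $n$ in the basis vector $x^2/n$ is precisely what accounts for the $\langle -2n \rangle$ summand, which makes the identification work on the nose.

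Third, I would match Hodge filtrations. The $F^2$-line of $\mathrm{Sym}^2\VV_{\sqrt{n}}$ at $\tau \in \HH$ is the square of the $F^1$-line of $\VV_{\sqrt{n}}$, namely the span of $\tau^2 x^2 + 2\tau\,xy + y^2 \in \mathrm{Sym}^2(\CC^2)$. Rewriting in the basis $(x^2/n, -xy, y^2)$ and then transporting to $M_n^\perp \otimes \CC$ in the basis $(f, e, -g)$ produces an explicit isotropic line lying in $\calD_n^+$. Comparing with the parametrization of $\calD_n^+ \cong \HH$ induced by $A$, this is precisely the tautological $F^2$-line defining $\VV_n$, so the two Hodge filtrations coincide. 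Polarizations agree by construction. The main obstacle is essentially bookkeeping: tracking basis changes, sign conventions, and normalization factors of $n$ carefully enough to ensure the identification is integral rather than merely rational, and that the specific $F^2$-lines really do match on the nose. The substantive algebraic input is already laid out in the excerpt.
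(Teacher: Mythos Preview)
Your proposal is correct and follows essentially the same approach as the paper: construct $\VV_{\sqrt{n}}$ as the standard weight-one VHS on $Y_0(n)$, then use the map $A$ together with the explicit parametrization $\tau \mapsto n\tau^2 f - \tau e - g$ of $\calD_n^+$ to match both the underlying local system and the Hodge filtration of $\mathrm{Sym}^2\VV_{\sqrt{n}}$ with those of $\VV_n$. One small caveat: $(x^2/n,-xy,y^2)$ is not a $\ZZ$-basis of $\mathrm{Sym}^2(\ZZ^2)$ but rather of a commensurable $\Gamma_0(n)$-stable lattice, so your integral-isometry sentence needs rephrasing; this does not affect the argument, and the paper is equally informal on this point since nothing downstream depends on the precise integral structure.
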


\begin{proof} Begin by considering the weight $1$ variation of Hodge structure $\calF^1 \subseteq \calF^0$ on $\mathbb{H} \times \mathbb{Z}^2$, defined so that $\calF^1$ is the sub-bundle of $\calF^0 := \mathbb{H} \times \CC^2$ whose fibre over $\tau \in \HH$ is given by the subspace in $ \mathbb{C}^2$ spanned by $(\tau,1)$.

Now define a local system  $\mathbb{V}_{\sqrt{n}}$ on $Y_0(n)$ as the quotient
\[\mathbb{H} \times \mathbb{Z}^2 / \Gamma_0(n)\] 
where $\Gamma_0(n)$ acts on $\mathbb{H}$ by M\"obius transformation and acts on $\mathbb{Z}^2$ by the natural representation. The variation of Hodge structure $\calF^1 \subseteq \calF^0$ descends under this quotient, so defines a weight $1$ variation of Hodge structure on $\mathbb{V}_{\sqrt{n}}$. 

We claim that $\mathrm{Sym}^2 \mathbb{V}_{\sqrt{n}}$ is actually the local system $\mathbb{V}_n$ described above. Indeed, this follows from the discussion above if we take the map
\begin{align*}\mathbb{H} &\longrightarrow \mathcal{D}_n^+ \subseteq \mathbb{P}(M_n^\perp \otimes \mathbb{C})\\
\tau &\longmapsto n\tau^2 e-\tau f+g
\end{align*}
which sends the vector $(\tau,1)$ to its symmetric square. This is a morphism of Hodge structures, and the map $A$ transforms the action of $\Gamma_0(n)$ on $\mathbb{H} \times \CC^2$  into the action of $\mathrm{SO}(M_n^\perp)^+$ on $(\calD_n^+,M_n^{\perp}\otimes \mathcal{O}_{\mathbb{P}^2})$.
\end{proof}

\begin{remark} \label{rem:iota} Note that a similar result does \emph{not} hold for the variation of Hodge structure $\mathbb{V}_n^+$. This is because the matrix $\iota \in \mathrm{O}(M_n^\perp)^*$ has no preimage under $A$ in $\mathrm{SL}_2(\mathbb{R})$, so the action of $\mathrm{O}(M_n^\perp)^*$ is \emph{not} the one induced by applying $A$ to $\Gamma_0(n)^+$. Indeed, in the basis $(e,f,g)$ the matrix of $\iota$ has determinant $-1$, so it cannot come from an element of $\mathrm{SL}_2(\mathbb{R})$.

In fact, one may check that the map $A$ sends
\[\overline{\tau}_n := \sqrt{-1} \left( \begin{matrix} 0 & -1/\sqrt{n} \\ \sqrt{n} & 0 \end{matrix} \right)\]
to $\iota$; this should be thought of as a complex rotation of the Fricke involution. Define
\[\overline{\Gamma_0(n)^+} := \Gamma_0(n) \cup \overline{\tau}_n \Gamma_0(n) \subseteq \mathrm{PSL}_2(\CC).\]
Then $\overline{\Gamma_0(n)^+}$ is isomorphic to $\Gamma_0(n)^+$, as a subgroup of $\mathrm{PSL}_2(\CC)$, and acts on $\HH$ in the same way. The image of $\overline{\Gamma_0(n)^+}$ under $A$ is the subgroup of $\mathrm{O}(M_n^{\perp})^*$ that stabilizes $\calD_n^+$. From this we recover the isomorphism $\calD_n/\mathrm{O}(M_n^{\perp})^* \cong Y_0(n)^+$.
\end{remark}

Now that we understand $\VV_n$, we may use it to deduce properties of $\VV_n^+$. In order to apply Lemma \ref{lem:hodgeinjection} and access Hodge theoretic data about threefolds fibred by $M_n$-polarized K3 surfaces, we need to understand the Hodge decompositions induced on the cohomology groups $H^1(X_0(n),i_*\VV_n)$ and $H^1(X_0(n)^+,i^+_*\VV^+_n)$. We begin with a technical lemma that will allow us to compare these cohomology groups.

\begin{lemma}\label{lemma:injectivity} Let $g\colon B \to X_0(n)^+$ be a covering of $X_0(n)^+$ by a smooth complete curve. Let $U \subset B$ denote the preimage of $Y_0(n)^+$ under $g$ and let $j \colon U \to B$ denote the embedding. Then the natural morphism of Hodge structures induced by the pull-back \textup{(}see \cite[Proposition 8.2]{htdcl2cpm}\textup{)}
\[g^*\colon H^1(X_0(n)^+, i^+_*\VV_n^+) \longrightarrow H^1(B,j_*g^*\VV_n^+)\]
is injective.
\end{lemma}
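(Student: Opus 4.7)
The strategy is to factor $g^*$ through the cohomology of the open loci on which the variation is defined, then invoke the trace map for finite surjective morphisms. Set $U := g^{-1}(Y_0(n)^+) \subseteq B$, so that $g|_U\colon U \to Y_0(n)^+$ is the restriction of $g$ over the open part.

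First I would produce injective restriction maps
\[H^1(X_0(n)^+, i^+_* \VV_n^+) \hookrightarrow H^1(Y_0(n)^+, \VV_n^+), \qquad H^1(B, j_* g^* \VV_n^+) \hookrightarrow H^1(U, g^* \VV_n^+).\]
This follows from the Leray spectral sequence for $i^+$ and $j$: for a local system on a smooth curve, the sheaves $R^q i^+_*$ (for $q \geq 1$) are skyscraper sheaves supported on the boundary, with stalks computed by local monodromy. The spectral sequence thus collapses to the five-term exact sequence
\[0 \to H^1(X_0(n)^+, i^+_* \VV_n^+) \to H^1(Y_0(n)^+, \VV_n^+) \to H^0(X_0(n)^+, R^1 i^+_* \VV_n^+),\]
and analogously on $B$. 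Naturality of the pullback (and the construction in \cite[Proposition 8.2]{htdcl2cpm}) ensures that $g^*$ fits into a commutative square with these inclusions.

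The main step is then to verify that the pullback $g^*\colon H^1(Y_0(n)^+, \VV_n^+) \to H^1(U, g^* \VV_n^+)$ on the open loci is injective. Since $B$ and $X_0(n)^+$ are smooth complete curves and $g$ is non-constant, it is finite of some degree $d$, and so is $g|_U$. For any $\QQ$-local system $\VV$ on $Y_0(n)^+$ there is a natural trace morphism $\operatorname{tr}\colon (g|_U)_* (g|_U)^* \VV \to \VV$ whose composition with the adjunction unit is multiplication by $d$. Because $g|_U$ is finite, $(g|_U)_*$ is exact and identifies $H^1(Y_0(n)^+, (g|_U)_* (g|_U)^* \VV) = H^1(U, g^* \VV)$; hence the composition $\operatorname{tr}_* \circ g^* = d \cdot \mathrm{id}$, forcing $g^*$ to be injective. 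Combining this with the first step gives the lemma: the pullback between middle cohomologies is the restriction of an injection between the open cohomologies. The main subtlety I anticipate is bookkeeping around the trace when $g$ has ramification inside $Y_0(n)^+$, but with $\QQ$-coefficients the identity $g_* \circ g^* = d \cdot \mathrm{id}$ continues to hold, which suffices since our Hodge structures are rational.
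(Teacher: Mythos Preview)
Your argument is correct, and it takes a genuinely different route from the paper's. The paper proceeds geometrically: it picks a smooth modular family $\calX_n \to X_0(n)^+$ of $M_n$-polarized K3 surfaces, pulls it back along $g$, resolves to a smooth threefold $\calX'$, and then uses the earlier Lemma~\ref{lem:hodgeinjection} to embed both $H^1$ groups into $H^3(\calX_n,\CC)$ and $H^3(\calX',\CC)$ respectively. Injectivity of $g^*$ then follows from injectivity of $H^3(\calX_n,\CC) \to H^3(\calX',\CC)$, which holds because pull-back along a surjective morphism of smooth projective varieties is injective on cohomology (itself a transfer argument, but applied one level up). Your approach bypasses the threefold total spaces entirely and runs the transfer argument directly on the curves: the Leray five-term sequence embeds $H^1(X_0(n)^+,i^+_*\VV_n^+)$ into $H^1(Y_0(n)^+,\VV_n^+)$, the finite map $g|_U$ admits a trace with $\operatorname{tr}\circ g^* = d\cdot\mathrm{id}$ over $\QQ$, and the commutative square finishes the job.

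What each buys: your proof is more elementary and more general---it works for any $\QQ$-local system on a curve and makes no use of the existence or smoothness of a universal K3 family, nor of the earlier Lemma~\ref{lem:hodgeinjection}. The paper's proof, on the other hand, ties the statement directly into the ambient geometry that the rest of the section is built around, so that the injection \eqref{eq:inj} into $H^3(\calX,\CC)$ is visibly compatible with everything else. Your concern about ramification inside $Y_0(n)^+$ is well-placed but harmless: since $g|_U$ is finite between smooth curves it is flat, and the sheaf-level trace $(g|_U)_*(g|_U)^*\VV \to \VV$ still composes with the unit to multiplication by $d$ over $\QQ$, regardless of ramification.
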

\begin{proof} Let $\widetilde{\calX}_n \to X_0(n)^+$ denote a modular family of $M_n$-polarized K3 surfaces with smooth total space over $X_0(n)^+ \cong \calM_{M_n}$; such a family may be obtained as a resolution of the family $\calX_n$ constructed in Section \ref{sec:modularinvariants}. Let $\calX'$ be a smooth resolution of the pull-back of $\widetilde{\calX}_n$ by the map $g$. We have a diagram
\[\xymatrix{H^3(\widetilde{\calX}_n,\CC) \ar[r]  & H^3(\calX',\CC) \\
H^1(X_0(n)^+, i^+_*\VV_n^+) \ar@{^{(}->}[u] \ar[r]^-{g^*} & H^1(B,j_*g^*\VV_n^+) \ar@{^{(}->}[u]
}\]
here the horizontal maps are the morphisms of Hodge structures induced by the pull-backs, and the vertical maps are the injective morphisms of Hodge structure arising from the Leray spectral sequence (see Lemma \ref{lem:hodgeinjection}).  It follows from functoriality of the Leray spectral sequence that the diagram commutes.

Using this diagram, we see that to prove injectivity of $g^*$, it suffices to prove that the map $H^3(\widetilde{\calX}_n,\CC) \to H^3(\calX',\CC)$ is injective. But this map is induced by the pull-back along the surjective map $\calX' \to \widetilde{\calX}_n$, obtained as the composition of the resolution and the pull-back by $g$, so its injectivity follows immediately from \cite[Lemma 7.28]{htcagi}. 
\end{proof}

Using this, we can prove an important result about the Hodge decompositions on the cohomology groups $H^1(X_0(n),i_*\VV_n)$ and $H^1(X_0(n)^+,i^+_*\VV^+_n)$

\begin{proposition}\label{prop:12=0}
The Hodge numbers $h^{p,q}$ of the weight $3$ Hodge structures on $H^1(X_0(n),i_*\VV_n)$ and $H^1(X_0(n)^+,i^+_*\VV_n^+)$ are trivial if $(p,q) = (2,1)$ or $(1,2)$.
\end{proposition}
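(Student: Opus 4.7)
The plan is to first handle the variation $\VV_n$ by exploiting its description as a symmetric square from Proposition \ref{prop:symsquare}, and then transfer the conclusion to $\VV_n^+$ by means of Lemma \ref{lemma:injectivity} applied to the natural double cover $X_0(n) \to X_0(n)^+$.

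For the statement concerning $\VV_n$, I would observe that the weight one variation $\VV_{\sqrt{n}}$ constructed in the proof of Proposition \ref{prop:symsquare} is (up to isomorphism) the standard variation of Hodge structure attached to the universal elliptic curve over $Y_0(n)$: it is the quotient of the trivial weight $1$ variation on $\HH \times \CC^2$ by the natural $\Gamma_0(n)$-action, which is precisely $R^1 f_*\ZZ$ for the universal elliptic curve $f$. The Hodge decomposition on $H^1(X_0(n), i_* \mathrm{Sym}^{k-2}\VV_{\sqrt{n}})$ is then computed by the Hodge-theoretic form of the Eichler--Shimura isomorphism, as developed by Deligne and Zucker: this cohomology is pure of weight $k-1$ and concentrated in Hodge type $(k-1,0)+(0,k-1)$, with the $(k-1,0)$ part isomorphic to the space of weight $k$ modular forms for $\Gamma_0(n)$ (cusp forms plus an Eisenstein contribution, which Zucker's theorem keeps of pure Hodge type under the $i_*$ extension). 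Specializing to $k=4$ immediately yields the vanishing of the $(2,1)$ and $(1,2)$ components of $H^1(X_0(n), i_*\VV_n)$.

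To deduce the statement for $\VV_n^+$, I would take $g\colon X_0(n) \to X_0(n)^+$ to be the natural double cover induced by the inclusion $\Gamma_0(n) \subset \Gamma_0(n)^+$. Since $\VV_n$ is by construction the pullback of $\VV_n^+$ along the corresponding quotient $\calD_n/\mathrm{SO}(M_n^\perp)^* \to \calD_n/\mathrm{O}(M_n^\perp)^*$, we have $g^*\VV_n^+ \cong \VV_n$ on $Y_0(n)$, and Lemma \ref{lemma:injectivity} supplies an injective morphism of weight $3$ Hodge structures
\[
H^1(X_0(n)^+, i^+_*\VV_n^+) \hookrightarrow H^1(X_0(n), i_*\VV_n).
\]
Because morphisms of Hodge structures strictly preserve the Hodge decomposition, the vanishing of the $(2,1)$ and $(1,2)$ pieces on the right forces the same vanishing on the left.

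The only real content is the Eichler--Shimura--Deligne--Zucker computation, and the main subtlety I anticipate is ensuring that it applies cleanly in the present setting: one is working with the full $i_*$ extension rather than $i_!$, and the orbifold local system $\VV_{\sqrt{n}}$ has nontrivial monodromy around the elliptic points of $\Gamma_0(n)$ of orders $2$ and $3$ as well as unipotent monodromy at the cusps. These are well-known features of the theory and do not disturb the purity of Hodge type $(k-1,0)+(0,k-1)$, so beyond correctly citing this standard fact I expect no serious obstacle.
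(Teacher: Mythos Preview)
Your proposal is correct and follows essentially the same route as the paper: both invoke the Shimura/Eichler--Shimura description of $H^1(X_0(n),i_*\VV_n)$ via Zucker's Hodge-theoretic identification to conclude that the Hodge structure is concentrated in types $(3,0)$ and $(0,3)$, and both then deduce the $\VV_n^+$ statement from Lemma \ref{lemma:injectivity} applied to the double cover $X_0(n)\to X_0(n)^+$. One small remark: for the extension $i_*$ (equivalently, parabolic/$L^2$ cohomology via Zucker), the relevant automorphic contribution is cusp forms only, with no Eisenstein part---but this does not affect your conclusion about the vanishing of the $(2,1)$ and $(1,2)$ pieces.
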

\begin{proof}
Shimura \cite{sliaafa} has shown that there is a Hodge structure of weight $3$ on the space $S_2(\Gamma_0(n)) \oplus \overline{S_2(\Gamma_0(n))}$, where $S_2(\Gamma_0(n))$ denotes the space of $\CC^2$-valued cusp forms of weight $2$ for $\Gamma_0(n)$, which is concentrated in the $(0,3)$ and $(3,0)$ components. Zucker \cite[Section 12]{htdcl2cpm} proved that this Hodge structure agrees with the Hodge structure on $H^1(X_0(n),i_*\mathbb{V}_n)$ described above; this completes the proof in the $X_0(n)$ case. Given this, the $X_0(n)^+$ case follows immediately from Lemma \ref{lemma:injectivity}.
\end{proof}

We can use this to compute the Hodge numbers $h^{3,0}$ and $h^{0,3}$ of the Hodge structure on $H^1(X_0(n)^+,i^+_*\VV^+_n)$. Indeed, since we know that the Hodge numbers $h^{1,2}$ and $h^{2,1}$ are trivial, it suffices to compute the rank of $H^1(X_0(n)^+,i^+_*\mathbb{V}_n^+)$. To do this, we follow the method of \cite[Section 3.3]{cytfmqk3s}, which requires us to determine the local monodromy matrices of the local system $\mathbb{V}_n^+$. 

\subsection{Computation of the local monodromy}\label{sec:localmonodromy}

We begin by describing the local monodromy matrices of $\mathbb{V}_n$, which we compute using Proposition \ref{prop:symsquare} and the well-known local monodromy matrices of $\mathbb{V}_{\sqrt{n}}$.  For $n > 4$, we can use these to deduce the local monodromy matrices of $\mathbb{V}_n^+$. The cases $n \in \{2,3,4\}$ will then be tackled individually.

Let $p$ be a point in $\mathbb{H}$ that is fixed by a matrix $\gamma$ in $\mathrm{SL}_2(\mathbb{R})$. If the order of $\gamma$ is $4$, then $\gamma$ is conjugate to
\[\pm \left(\begin{matrix} 0 & 1 \\ -1 & 0 \end{matrix} \right),\]
whilst if the order of $\gamma$ is $3$ or $6$, then $\gamma$ is conjugate to 
\[\pm \left( \begin{matrix} 0 & 1 \\ -1 & -1 \end{matrix}\right)^i\]
for $i \in \{1,2\}$. Finally, for each cusp $p$ of $X_0(n)$, the matrix which fixes $p$ is conjugate to 
\[\pm \left(\begin{matrix} 1 & n \\ 0 & 1\end{matrix} \right)\]
for some integer $n$. Applying the map $A$ and reducing to Jordan form, we see that the local monodromy matrices of $\mathbb{V}_n$ are conjugate to
\[\left( \begin{matrix} {-1} & 0 & 0 \\ 0 & {-1} & 0 \\ 0 & 0 & 1 \end{matrix}\right),\ 
\left( \begin{matrix} \omega & 0 & 0 \\ 0 & \omega^2 & 0 \\ 0 & 0 & 1 \end{matrix}\right), \text{ or }
\left( \begin{matrix} 1 & 1 & 0 \\ 0 & 1 & 1 \\ 0 & 0 & 1 \end{matrix}\right),\]
where $\omega$ denotes a primitive cube root of $1$. 

Finally, in order to generate the entire group $\mathrm{O}(M_n^\perp)^*$, one must add the matrix $\iota$ (see Remark \ref{rem:iota}); it has Jordan normal form
\[\left( \begin{matrix} {-1} & 0 & 0 \\ 0 & {1} & 0 \\ 0 & 0 & 1 \end{matrix}\right).\]

Now we descend to the local monodromy matrices of $\mathbb{V}_n^+$. The following result seems well known.

\begin{lemma}\label{lemma:fixed}
The following statements hold:
\begin{enumerate}
\item If $n \neq 4$, the Fricke involution $\tau_n$ does not fix any cusp of $X_0(n)$.
\item If $n > 3$, the Fricke involution $\tau_n$ does not fix any elliptic orbifold point of $X_0(n)$.
\end{enumerate}
\end{lemma}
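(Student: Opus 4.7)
The plan is to translate each statement into a question about integer matrices via the lift $W_n = \bigl(\begin{smallmatrix}0 & -1 \\ n & 0\end{smallmatrix}\bigr)$ of $\tau_n$ (so $\tau_n = W_n/\sqrt n$ acts on $\HH$ in the same way). A point of $X_0(n)$ is fixed by $\tau_n$ if and only if some lift is fixed by some element of $W_n\Gamma_0(n)$. Direct multiplication shows that every element of $W_n\Gamma_0(n)$ has the form $\sigma = \bigl(\begin{smallmatrix}-cn & -d \\ na & nb\end{smallmatrix}\bigr)$ with $ad - bcn = 1$; in particular $\det\sigma = n$, and $n$ divides $\sigma_{11}$, $\sigma_{21}$, and $\sigma_{22}$, so $n \mid \operatorname{tr}\sigma$. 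For $\sigma$ to fix a point inside $\HH$ we need $(\operatorname{tr}\sigma)^2 < 4n$; writing $\operatorname{tr}\sigma = nk$ gives $nk^2 < 4$, and this is where the hypothesis $n > 3$ enters, since it forces $k = 0$ and hence $\sigma^2 = -nI$. This is the key structural observation behind part (2).

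For part (1), I would use the standard parameterization of the cusps of $X_0(n)$ by pairs $(c,[a])$ with $c \mid n$ and $[a] \in (\ZZ/\gcd(c,n/c))^\times$, representing the cusp $a/c$. The elementary computation $\tau_n \cdot (a/c) = -1/(an')$ with $n' = n/c$, together with a $\gcd$ computation showing this cusp lies in the denominator class $n'$, reduces the fixed-point condition to $c = n/c$, so $n = c^2$ is a perfect square. When $n = c^2$, an explicit matrix computation (finding an element of $\Gamma_0(c^2)$ moving $-1/(ca)$ to $a''/c$) shows Fricke acts on the cusps of denominator $c$ by $a \mapsto -a \pmod c$; this is the identity iff $c \mid 2a$, and since $\gcd(a,c) = 1$ this forces $c \in \{1,2\}$, i.e.\ $n \in \{1,4\}$.

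For part (2), assume $n > 3$ and let $\tilde p \in \HH$ be fixed both by some elliptic $\alpha \in \Gamma_0(n)$ (of order $2$ or $3$ in $\mathrm{PSL}_2$) and by some $\sigma \in W_n\Gamma_0(n)$; by the observation above, $\operatorname{tr}\sigma = 0$ and $\sigma^2 = -nI$. Since the stabilizer of $\tilde p$ in $\mathrm{GL}_2^+(\RR)$ is abelian, $\sigma$ and $\alpha$ commute, so $\sigma$ lies in the two-dimensional $\QQ$-commutant $\QQ[\alpha] \subset M_2(\QQ)$, and we may write $\sigma = rI + s\alpha$ with $r,s \in \QQ$. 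If $\alpha$ has order $2$ then $\operatorname{tr}\alpha = 0$, so $\operatorname{tr}\sigma = 2r = 0$ forces $r = 0$, and $\det\sigma = s^2 = n$ forces $n = k^2$ to be a square with $\sigma = \pm k\alpha$; then $n \mid \sigma_{11}$ forces $k \mid \alpha_{11}$, and combining with the identity $\alpha_{11}^2 + \alpha_{12}\alpha_{21} = -1$ and $k^2 = n \mid \alpha_{21}$ gives $k^2 \mid 1$, so $n = 1$, a contradiction. If $\alpha$ has order $3$ then $\operatorname{tr}\alpha = \pm 1$, and the norm form of $\QQ[\alpha]$ combined with $\operatorname{tr}\sigma = 0$ forces $n = 3r^2$ with $r \in \ZZ_{\geq 1}$; the condition $n \mid \sigma_{11}$ becomes $3r \mid 2a \mp 1$ with $a = \alpha_{11}$, and combining this with $3r^2 \mid a^2 \mp a + 1$ (from $\alpha \in \Gamma_0(n)$) via $(2a \mp 1)^2 = 4(a^2 \mp a + 1) - 3$ gives $r^2 \mid 3$, so $r = 1$ and $n = 3$, again a contradiction. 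The case $n = 4$ must be treated separately: $X_0(4)$ has no elliptic orbifold points (since $x^2 + 1$ and $x^2 + x + 1$ have no roots modulo $4$), so the claim is vacuous.

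The main obstacle is the bookkeeping in the order-$3$ case of part (2): one must correctly identify the norm form on $\QQ[\alpha]$ and then carefully combine the divisibility $3r \mid 2a \mp 1$ (from $\sigma \in W_n\Gamma_0(n)$) with the integrality $3r^2 \mid a^2 \mp a + 1$ (from $\alpha \in \Gamma_0(n)$) to deduce $r^2 \mid 3$; the rest of the argument reduces, in spirit, to the very clean observation that for $n > 3$ any Fricke-fixing matrix in $W_n\Gamma_0(n)$ must be traceless.
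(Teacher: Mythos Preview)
Your proof is correct and essentially self-contained, whereas the paper simply cites a reference (Proposition 3 of Ogg's paper on hyperelliptic modular curves) for part (1) and appeals to the moduli-theoretic description of the Fricke fixed points as CM elliptic curves with complex multiplication by $\QQ(\sqrt{-n})$ for part (2). Your approach is genuinely different: you work directly with integer matrices in $W_n\Gamma_0(n)$ and extract the constraints from trace, determinant, and congruence conditions. The key structural step---that for $n>3$ any element of $W_n\Gamma_0(n)$ fixing a point of $\HH$ must be traceless, hence satisfies $\sigma^2=-nI$---is clean and makes the elliptic case a short computation in the commutative ring $\QQ[\alpha]$. The paper's cited approach, by contrast, identifies the Fricke fixed points with Heegner points and reads off the result from the theory of complex multiplication; this gives more conceptual information (one learns exactly which points are fixed, not just that they avoid the elliptic points and cusps) but requires more background. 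Your argument is the better fit if one wants an elementary, reference-free proof.

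Two very minor remarks. In part (1), your phrase ``this is the identity iff $c\mid 2a$'' is slightly ambiguous; what you mean (and what you use) is that the involution $a\mapsto -a$ on $(\ZZ/c)^\times$ has a fixed point iff $c\mid 2$, which is exactly what you conclude. In the order-$3$ case of part (2), your final divisibility actually gives $r^2\mid 1$ rather than $r^2\mid 3$: from $(2a\mp 1)^2=4(a^2\mp a+1)-3$ with $3r\mid 2a\mp 1$ and $3r^2\mid a^2\mp a+1$ one obtains $3=3r^2(4k-3m^2)$, hence $r^2=1$. This only strengthens your conclusion. You should also note (as you implicitly use) that $r\in\QQ$ and $3r^2=n\in\ZZ$ force $r\in\ZZ$, which justifies treating $3r$ as an integer divisor.
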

\begin{proof}
The first statement is precisely \cite[Proposition 3]{hmc} and the second follows easily from the discussion following it. In particular, \cite{hmc} shows that the fixed points of $\tau_n$ correspond to specific elliptic curves with level $n$ structure and with complex multiplication by $\mathbb{Q}(\sqrt{-n})$.
\end{proof}

One can check by concrete computation that
\begin{enumerate}
\item If $n=2$, the Fricke involution $\tau_2$ fixes one smooth point in $X_0(2)$ and the orbifold point of order $2$.
\item If $n=3$, the Fricke involution $\tau_3$ fixes one smooth point in $X_0(3)$ and the orbifold point of order $3$.
\item If $n=4$, the Fricke involution $\tau_4$ fixes one smooth point in $X_0(4)$ and a cusp of width $4$.
\end{enumerate}
Otherwise, Lemma \ref{lemma:fixed} shows that only smooth points of $X_0(n)$ are fixed by the Fricke involution $\tau_n$. 

It follows that if $n > 4$, the local monodromy matrices in the local system $\mathbb{V}_n^+$ are given by 
\[\left( \begin{matrix} {-1} & 0 & 0 \\ 0 & {1} & 0 \\ 0 & 0 & 1 \end{matrix}\right),\ \left( \begin{matrix} {-1} & 0 & 0 \\ 0 & {-1} & 0 \\ 0 & 0 & 1 \end{matrix}\right),\ 
\left( \begin{matrix} \omega & 0 & 0 \\ 0 & \omega^2 & 0 \\ 0 & 0 & 1 \end{matrix}\right),\
\left( \begin{matrix} 1 & 1 & 0 \\ 0 & 1 & 1 \\ 0 & 0 & 1 \end{matrix}\right)\]
at the images of a smooth fixed point, an orbifold point of order $2$, an orbifold point of order $3$, and a cusp respectively.

Moreover, the local monodromy matrices in the local system $\mathbb{V}_2^+$ were computed in \cite[Example 3.6]{cytfmqk3s}. They are given by
\[\left( \begin{matrix} {-1} & 0 & 0 \\ 0 & {1} & 0 \\ 0 & 0 & 1 \end{matrix}\right),\ 
\left( \begin{matrix} \sqrt{-1} & 0 & 0 \\ 0 & -\sqrt{-1} & 0 \\ 0 & 0 & -1 \end{matrix}\right),\
\left( \begin{matrix} 1 & 1 & 0 \\ 0 & 1 & 1 \\ 0 & 0 & 1 \end{matrix}\right)\]
around the orbifold point of order $2$, the orbifold point of order $4$, and the cusp respectively.

To compute the local monodromy matrices in the local systems $\mathbb{V}_3^+$ and $\mathbb{V}_4^+$, we use the fundamental modular families $\calX_3 \to \calM_{M_3}$ and $\calX_4 \to \calM_{M_4}$. These families were computed explicitly in \cite[Section 5.4]{flpk3sm} (see also Proposition \ref{prop:modularity}, below). Moreover, it follows from Proposition \ref{prop:pullback} that $\mathbb{V}_3^+$ and $\mathbb{V}_4^+$ agree with the transcendental variations of Hodge structure associated with these families.

The periods of the families $\calX_3$ and $\calX_4$ were computed by Doran and Malmendier \cite[Lemma 6.6]{cymrsrmt}, who found that they are given by the hypergeometric functions $\mbox{}_3F_2(\frac{1}{3},\frac{1}{2},\frac{2}{3};1,1,108\lambda)$ and $\mbox{}_3F_2(\frac{1}{2},\frac{1}{2},\frac{1}{2};1,1;64\lambda)$ respectively. From this, the global monodromy representations of the transcendental variations of Hodge structure $\calT(\calX_3)$ and $\calT(\calX_4)$ can be computed using a theorem of Levelt \cite[Theorem 1.1]{leveltthesis} (see also \cite[Theorem 3.5]{mhff}). This computation shows that the local monodromy matrices in the local system $\VV_3^+$ are conjugate to
\[\left( \begin{matrix} {-1} & 0 & 0 \\ 0 & {1} & 0 \\ 0 & 0 & 1 \end{matrix}\right),\ 
\left( \begin{matrix} \omega & 0 & 0 \\ 0 & \omega^2 & 0 \\ 0 & 0 & -1 \end{matrix}\right),\
\left( \begin{matrix} 1 & 1 & 0 \\ 0 & 1 & 1 \\ 0 & 0 & 1 \end{matrix}\right)\]
around the orbifold point of order $2$, the orbifold point of order $6$, and the cusp, respectively, and the local monodromy matrices in the local system $\VV_4^+$ are conjugate to
\[\left( \begin{matrix} {-1} & 0 & 0 \\ 0 & {1} & 0 \\ 0 & 0 & 1 \end{matrix}\right),\ 
\left( \begin{matrix} 1 & 1 & 0 \\ 0 & 1 & 1 \\ 0 & 0 & 1 \end{matrix}\right),\
\left( \begin{matrix} -1 & 1 & 0 \\ 0 & -1 & 1 \\ 0 & 0 & -1 \end{matrix}\right)\]
around the orbifold point of order $2$, the cusp of width $1$, and the cusp of width $2$, respectively.

Now we have the local monodromy data, we may easily compute the rank of $H^1(X_0(n)^+, i^+_*\mathbb{V}_n^+)$. In general, suppose that $\mathbb{V}$ is a local system on an open subset $j\colon U \hookrightarrow B$, and let $\{p_1,\dots, p_n\}= B \setminus U$. Let $\gamma_i$ be the monodromy transformation associated to a small loop moving counterclockwise around $p_i$ and let
\[R(p_i) = \rank (\mathbb{V}_{q_i}) - \rank(\mathbb{V}_{q_i}^{\gamma_i}),\]
where $\VV_{p_i}^{\gamma_i}$ is the subspace of elements of $\VV_{p_i}$ that are fixed under the action of $\gamma_i$. The rank of $H^1(B,j_* \mathbb{V})$ is given by the following variation of Poincar\'{e}'s formula in classical topology, due to del Angel, M\"{u}ller-Stach, van Straten and Zuo \cite[Proposition 3.6]{hca1pfcy3f}:
\begin{equation} \label{eq:poincare}h^1(B,j_*\mathbb{V}) = \sum_{i=1}^n R(p_i) + 2 (g(B) -1) \rank (\mathbb{V}).\end{equation}

Applying this in our setting, we see that $h^{1}(X_0(n)^+,i^+_*\mathbb{V}_n^+) = 0$ if $n \in \{2,3,4\}$ and 
\begin{equation} \label{eq:h1}
h^{1}(X_0(n)^+,i^+_*\mathbb{V}_n^+) = k_n+ \nu_2 +  \nu_3 + \nu_{\infty} + 6\left(g(X_0(n)^+) - 1\right), 
\end{equation} 
for $n>4$, where $k_n$ denotes the number of smooth points fixed by $\tau_n$ in $X_0(n)$, and $\nu_2$, $\nu_3$ and $\nu_{\infty}$ denote the numbers of elliptic points of order $2$, elliptic points of order $3$, and cusps in $X_0(n)$ respectively. Formulas for the various terms in this expression are well-known: formulas for $\nu_2$, $\nu_3$ and $\nu_{\infty}$ appear in \cite[Corollary 3.7.2 and Section 3.8]{fcmf}, and a formula for $k_n$ appears in \cite[Section 2]{hmc}.

\subsection{Application to Calabi-Yau threefolds}\label{sec:CYbound}

We wish to use this theory to restrict which lattices $M_n$ can arise as polarizations in a Calabi-Yau threefold fibred non-isotrivially by $M_n$-polarized K3 surfaces.

So let  $\pi\colon \calX \to B$ be a smooth Calabi-Yau threefold fibred non-isotrivially by $M_n$-polarized K3 surfaces (note that the cohomological condition in the definition of Calabi-Yau threefold necessitates $B \cong \PP^1$). Define $\pi^U\colon \calX^U \to U$ as before, and let $j\colon U \to B$ denote the inclusion.

%

By Proposition \ref{prop:pullback}, the variation of Hodge structure on $\calT(\calX^U)$ is the pullback of $\VV_n^+$ by the generalized functional invariant map $g\colon B \to X_0(n)^+$. Combining this with Lemmas \ref{lem:hodgeinjection} and \ref{lemma:injectivity}, we see that we must have an injective morphism of Hodge structures
\begin{equation}\label{eq:inj} H^1(X_0(n)^+,i_*^+\VV_n^+) \hookrightarrow H^3(\calX,\CC).\end{equation}
Moreover, by the Calabi-Yau condition, we must have $h^{3,0}(\calX) = 1$, so the Hodge structure on $H^1(X_0(n)^+,i_*^+\VV_n^+)$ must have Hodge number $h^{3,0} \leq 1$. By Proposition \ref{prop:12=0}, this is equivalent to the condition $\rank H^1(X_0(n)^+, i^+_*\mathbb{V}_n^+) \leq 2$.

%
%

Summarizing, we see that in order for $\calX$ to be a smooth Calabi-Yau, the following two conditions must hold:
\begin{enumerate}
\item The curve $X_0(n)^+$ must have genus $0$ (by Hurwitz's theorem), and
\item The local system $\mathbb{V}_n^+$ must satisfy $\rank H^1(X_0(n)^+, i^+_*\mathbb{V}_n^+) \leq 2$.
\end{enumerate}

The set of $X_0(n)^+$ that have genus $0$ may be computed using the double cover $X_0(n) \to X_0(n)^+$ induced by the Fricke involution $\tau_n$ and work of Ogg \cite{hmc} on the involutions of $X_0(n)$. We note first that $X_0(n)^+$ must have genus $0$ if $X_0(n)$ has genus $0$, and a complete list of all integers $n$ so that $X_0(n)$ has genus $0$ is known:
\[\{1, 2,3,4,5,6,7,8,9,10, 12,13, 16, 18, 25\}.\]
Secondly, $X_0(n)$ has genus $1$ if and only if $n$ is an element of the set
\[\{11, 14, 15, 17, 19, 20, 21, 24, 27, 32, 36, 49\},\]
and in all of these cases it is easy to check, using the Hurwitz formula and the formula for the number of fixed points $k_n$ of the Fricke involution from \cite[Section 2]{hmc}, that $X_0(n)^+$ has genus $0$. Finally, if $X_0(n)$ has genus greater than $1$, then $X_0(n)^+$ has genus $0$ if and only if the Fricke involution is a hyperelliptic involution, and the set of all $n$ such that this holds was computed in \cite[Theorem 2]{hmc} to be
\[\{23, 26, 29, 31, 35, 39, 41, 47, 50, 59, 71\}.\]
We thus find that $X_0(n)^+$ has genus $0$ if and only if $n$ is an element of one of these three sets; i.e. when $n$ lies in the set
\begin{align*}\{1,2,3,4,5,6,7,8,9,10, 11,12,13,14, 15,16,  17,18, 19, 20, 21, \\
23, 24, 25,  26, 27, 29, 31, 32, 35, 36, 39, 41, 47, 49, 50, 59, 71 \}.\end{align*}

\begin{theorem}\label{thm:H1CY3}
Suppose $n \geq 2$ and $X_0(n)^+$ is rational. $H^1(X_0(n)^+,i^+_*\mathbb{V}_n^+) =0$ if and only if $n$ is an element of the set
\[\{2,3,4,5,6,7,8,9,11\}\]
and $H^1(X_0(n)^+,i^+_*\mathbb{V}_n^+) =2$ if and only if $n$ is an element of the set
\[\{10,12,13,14,15,16,17,19,23\}.\]
In particular, if $n \neq 1$ and $\calX$ is a smooth Calabi-Yau threefold fibred non-isotrivially by $M_n$-polarized K3 surfaces, $n$ must be one of the integers contained in the sets above.
\end{theorem}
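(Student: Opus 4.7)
The plan is to apply formula \eqref{eq:h1} case by case, using standard tables of invariants of the modular curves $X_0(n)$, and then to read off from the resulting values which $n$ satisfy the Calabi--Yau constraint.

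For $n\in\{2,3,4\}$ we have already observed, immediately after \eqref{eq:h1}, that $h^{1}(X_0(n)^+,i^+_*\mathbb{V}_n^+)=0$; these values are therefore placed into the first list without further work. For the remaining $n>4$ with $X_0^+(n)$ rational (which gives the explicit list of candidates stated just before the theorem), I would compute the right hand side of \eqref{eq:h1}. Since $g(X_0(n)^+)=0$ by hypothesis, the genus term contributes $-6$, so we must tabulate
\[
N(n) := k_n+\nu_2+\nu_3+\nu_\infty-6,
\]
where $\nu_2,\nu_3,\nu_\infty$ are the counts of order--$2$ elliptic points, order--$3$ elliptic points, and cusps of $X_0(n)$ (classical divisor/Legendre-symbol formulas), and $k_n$ is the number of smooth points of $X_0(n)$ fixed by the Fricke involution $\tau_n$, given by the class-number formula stated after \eqref{eq:h1}.

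I would then produce a single table, one row per rational $X_0^+(n)$, listing $\nu_2,\nu_3,\nu_\infty,k_n$ and $N(n)$. The claim reduces to verifying that $N(n)=0$ exactly for $n\in\{5,6,7,8,9,11\}$ and $N(n)=2$ exactly for $n\in\{10,12,13,14,15,16,17,19,23\}$, while $N(n)>2$ for the remaining values (namely $n\in\{18,20,21,24,25,26,27,29,31,32,35,36,39,41,47,49,50,59,71\}$, which are therefore excluded). Combined with the three exceptional values $n\in\{2,3,4\}$ handled above, this gives the stated equivalences. The main obstacle here is purely bookkeeping: one must be careful that $k_n$ counts fixed points of $\tau_n$ which are \emph{smooth} on $X_0(n)$ (so that Lemma \ref{lemma:fixed} and the list of exceptions in $n=2,3,4$ are not double-counted), and one must correctly pick the class-number branch according to $n \bmod 4$.

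Finally, to deduce the Calabi--Yau consequence, suppose $\calX$ is a smooth Calabi--Yau threefold fibred non-isotrivially by $M_n$-polarized K3 surfaces with $n\geq 2$; the cohomological condition forces $B\cong\PP^1$. By Proposition \ref{prop:pullback}, Lemma \ref{lem:hodgeinjection}, and Lemma \ref{lemma:injectivity}, we obtain the injection \eqref{eq:inj}. Since $h^{3,0}(\calX)=1$ and Proposition \ref{prop:12=0} forces the middle Hodge numbers of $H^1(X_0(n)^+,i^+_*\mathbb{V}_n^+)$ to vanish, we need $\rank H^1(X_0(n)^+,i^+_*\mathbb{V}_n^+)\leq 2$. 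Moreover, Hurwitz's theorem applied to $g\colon B\to X_0(n)^+$ forces $X_0(n)^+$ to be rational. The first part of the theorem then restricts $n$ to the union of the two displayed lists, which is exactly the set in the final assertion.
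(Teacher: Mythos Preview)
Your proposal is correct and follows essentially the same approach as the paper. The paper's proof is the single sentence ``This is easy to check explicitly using Equation \eqref{eq:h1}''; you have simply spelled out what that check entails (handling $n\in\{2,3,4\}$ separately, tabulating $k_n+\nu_2+\nu_3+\nu_\infty-6$ for the remaining rational $X_0^+(n)$, and recapitulating the two conditions derived just before the theorem for the Calabi--Yau consequence).
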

\begin{proof} This is easy to check explicitly using Equation \eqref{eq:h1}. \end{proof}

Recall that a smooth Calabi-Yau threefold $\calX$ is called \emph{rigid} if $h^{2,1}(\calX) = 0$.

\begin{theorem}\label{thm:rigid}
If $\calX$ is a smooth Calabi-Yau threefold fibred non-isotrivially by $M_n$-polarized K3 surfaces and $n$ is an element of the set \[\{10,12,13,14,15,16,17,19,23\},\] 
then $\calX$ is a rigid Calabi-Yau threefold.
\end{theorem}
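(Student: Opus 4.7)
The plan is to establish $h^{2,1}(\calX) = 0$ for the listed values of $n$. Since $\calX$ is Calabi-Yau, $B \cong \PP^1$ and $h^{3,0}(\calX) = 1$. Combining Proposition \ref{prop:pullback} with Lemmas \ref{lem:hodgeinjection} and \ref{lemma:injectivity} produces a chain of injective morphisms of pure weight-$3$ Hodge structures
\[
H^1(X_0^+(n), i^+_*\VV_n^+) \hookrightarrow H^1(B, j_*g^*\VV_n^+) \hookrightarrow H^3(\calX, \CC),
\]
in which the second cokernel $C$ is supported in Hodge types $(1,2)$ and $(2,1)$. By Theorem \ref{thm:H1CY3} the leftmost term has rank $2$, and by Proposition \ref{prop:12=0} it is concentrated purely in types $(3,0)$ and $(0,3)$.

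Lemma \ref{lem:hodgeinjection} together with $h^{3,0}(\calX) = 1$ forces $h^{3,0}(H^1(B, j_*g^*\VV_n^+)) = 1$, and polarizability gives $h^{0,3} = 1$. Hence the image of $g^*$ already exhausts the $(3,0) \oplus (0,3)$ part of the middle term, so a Hodge complement $W$ of this image is a polarizable weight-$3$ structure with $h^{3,0}(W) = h^{0,3}(W) = 0$ and therefore lives purely in types $(2,1)$ and $(1,2)$. The resulting decomposition gives $h^{2,1}(\calX) = h^{2,1}(W) + h^{2,1}(C)$, and rigidity reduces to the pair of vanishings $h^{2,1}(W) = 0$ and $h^{2,1}(C) = 0$.

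For $h^{2,1}(W) = 0$, which is equivalent to $g^*$ being an isomorphism on cohomology, I would compute $\rank H^1(B, j_*g^*\VV_n^+)$ directly via the Poincaré-type formula \eqref{eq:poincare}. The local monodromies of $g^*\VV_n^+$ at the preimages of the special points of $X_0^+(n)$ are the appropriate powers of the matrices computed in Section \ref{sec:localmonodromy}, with exponents given by the ramification indices of $g$. I expect this to be the principal obstacle: one must show that the hypothesis that $\calX$ is a smooth Calabi-Yau threefold constrains the admissible ramification profiles of $g$ tightly enough to force $\sum R(p_i) = 8$, whence \eqref{eq:poincare} yields rank $2$. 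Locally this amounts to checking, near each cusp, orbifold point, and Fricke fixed point of $X_0^+(n)$, that a resolution of the pulled-back fundamental family $g^*\calX_n$ admits trivial relative dualising sheaf only when the ramification of $g$ is of the permitted form; the eigenvalue data of the monodromy matrices in Section \ref{sec:localmonodromy} then controls the contribution of each preimage to $\sum R(p_i)$.

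The vanishing $h^{2,1}(C) = 0$ follows by extending the cokernel analysis given in the proof of Lemma \ref{lem:hodgeinjection}: $C$ embeds into $\bigoplus H^3(S_i, \CC)$, where the $S_i$ are irreducible components of singular fibres of $\pi$, and the explicit singular-fibre descriptions that will be obtained in Section \ref{sec:modularfamilies} exhibit these components as rational, or birationally ruled over $\PP^1$, surfaces, for which $h^{2,1}(S_i) = 0$. Together the two vanishings give $h^{2,1}(\calX) = 0$, as required.
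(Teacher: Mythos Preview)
Your approach is genuinely different from the paper's and contains real gaps.

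The paper does \emph{not} attempt to compute $h^{2,1}(\calX)$ directly. Instead it argues by deformation theory: it first shows that every small deformation of $\calX$ is again a Calabi-Yau threefold fibred by $M_n$-polarized K3 surfaces (via a normal-bundle argument for the fibre $S\subset\calX$ and the observation that the polarization lattice is a topological invariant of the pair). Consequently the splitting
\[
H^3(\calX,\CC)\;\cong\;H^1(X_0^+(n),i^+_*\VV_n^+)\;\oplus\;\calH,
\]
with the first summand of rank $2$ concentrated in types $(3,0)+(0,3)$ and $\calH$ purely of types $(2,1)+(1,2)$, persists over the whole local deformation space. Bardelli's theorem then says that a Calabi-Yau threefold whose $H^3$ carries such a deformation-stable splitting has trivial Kodaira--Spencer map, and local Torelli gives rigidity. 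No knowledge of $g$, of the singular fibres, or of the fundamental families $\calX_n$ for these values of $n$ is needed.

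Your plan, by contrast, requires two computations that are not available for $n\in\{10,12,\ldots,23\}$. For the vanishing $h^{2,1}(W)=0$ you need to pin down the admissible ramification profiles of $g$ and verify $\sum R(p_i)=8$; but Theorem~\ref{thm:partitions} and the canonical-bundle analysis of Section~\ref{sec:modularfamilies} are carried out only for the non-rigid values $n\in\{2,\ldots,9,11\}$, and the fundamental families $\calX_n$ are not even written down for the $n$ at hand. For the vanishing $h^{2,1}(C)=0$ you appeal to the singular-fibre descriptions of Section~\ref{sec:modularfamilies}, which again do not cover these $n$; worse, your assertion that all fibre components are rational or ruled over $\PP^1$ is not automatic---Type~II components are elliptic ruled surfaces with $h^{2,1}=1$, so one cannot rule out a nonzero contribution without an explicit case analysis. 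Both ``principal obstacles'' you flag are therefore genuine, and the paper's deformation-theoretic argument is precisely the device that bypasses them.
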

\begin{proof}  We begin by showing that any small deformation of $\calX$ must also admit a fibration by $M_n$-polarized K3 surfaces. The first step in this is to show that any small deformation of $\calX$ must contain a K3 surface. This will follow if we can show that any small deformation of $\calX$ lifts to a small deformation of the pair $(\calX,S)$, where $S$ is a general fibre of the K3 fibration on $\calX$.

 Small deformations of $\calX$ are parametrized by the cohomology group $H^1(\calX,T_{\calX})$, where $T_{\calX}$ denotes the tangent bundle to $\calX$, and small deformations of the pair $(\calX,S)$ are parametrized by $H^1(\calX,T_{\calX}(-\log S))$. We may relate these two groups as follows. The short exact sequence
\[0 \longrightarrow T_{\calX}(-\log S) \longrightarrow T_{\calX} \longrightarrow N_{S|\calX} \longrightarrow 0,\]
where $N_{S|\calX}$ is the normal bundle of $S$ in $\calX$, induces a long exact sequence in cohomology
\[\cdots \longrightarrow H^1(\calX,T_{\calX}(-\log S)) \stackrel{\alpha}{\longrightarrow} H^1(\calX,T_{\calX}) \stackrel{\beta}{\longrightarrow} H^1(S,N_{S|\calX}) \longrightarrow \cdots.\]
The map $\alpha$ in this exact sequence corresponds to the fact that any small deformation of $(\calX,S)$ induces a small deformation of $\calX$. Thus the obstruction to lifting a small deformation of $\calX$ to a small deformation of $(\calX,S)$ comes from the map $\beta$. But, since $S$ is a fibre in a K3 fibration on $\calX$, we have $N_{S|\calX} \cong \calO_S$, so $H^1(S,N_{S|\calX}) = 0$. Therefore $\beta$ is trivial, and any small deformation of $\calX$ lifts to a small deformation of the pair $(\calX,S)$.

Next we show that if $S$ is a K3 surface in a Calabi-Yau threefold $\calX$, then $S$ is the class of a fibre of a map $\pi\colon \calX \rightarrow \mathbb{P}^1$. Indeed, let $S$ be a smooth K3 surface in $\calX$ and let $i\colon S \hookrightarrow \calX$ be the embedding. Any section $s$ in $H^0(\calX,\calO_{\calX}(S))$ gives rise to a short exact sequence
\[0 \longrightarrow \calO_{\calX} \stackrel{s}{\longrightarrow}\calO_{\calX}(S) \longrightarrow i_*\calO_S(S) \longrightarrow 0\]
from which we obtain the exact sequence of cohomology groups
\[0 \longrightarrow H^0(\calX,\calO_{\calX}) \stackrel{s}{\longrightarrow} H^0({\calX},\calO_{\calX}(S)) \longrightarrow H^0(S,\calO_S(S)) \longrightarrow 0.\]
Now, by adjunction we have 
\[\calO_S(S) = \omega_{\calX}(S)|_S = \omega_S = \calO_S,\]
thus for a generic global section $s$ of $\calO_{\calX}(S)$, the restriction of $s$ to $S$ does not vanish. This implies that the base locus of the linear system $|S|$ in $\calX$ is empty. Moreover, the exact sequence above gives $h^0(\calX,\calO_{\calX}(S)) = 2$, so the morphism defined by the linear system $|S|$ maps to $\PP^1$ and has fibres deformation equivalent to $S$. It thus defines a K3 fibration on $\calX$.

Finally, we show that this K3 fibration is $M_n$-polarized. Let $G_S$ denote the subgroup of $H^2(S,\mathbb{Z})$ obtained as the pull-back of $H^2(\calX,\ZZ)$ under the inclusion of $S$ into $\calX$, equipped with the usual cup-product pairing. $G_S$ is a topological invariant of the pair $(\calX,S)$. Moreover, $G_S$ is precisely the lattice polarization of $\pi\colon \calX\rightarrow \mathbb{P}^1$. This shows that any small deformation of $\calX$ remains fibred by K3 surfaces and, more importantly, this K3 surface fibration is $M_n$-polarized. 

%

Now, by Equation \eqref{eq:inj}, we have an injective morphism of Hodge structures $H^1(X_0(n)^+,i^+_* \mathbb{V}^+_0) \hookrightarrow H^3(\calX,\mathbb{C})$. Moreover, it follows from Theorem \ref{thm:H1CY3} that $H^1(X_0(n)^+,i^+_*\mathbb{V}_n^+) =2$ and, by Proposition \ref{prop:12=0}, we see that the Hodge structure on $H^1(X_0(n)^+,i^+_*\mathbb{V}_n^+)$ is concentrated in the $(3,0)$ and $(0,3)$ components. Thus, since $\calX$ is a Calabi-Yau threefold, it follows that there is some sub-Hodge structure $\mathcal{H}$ of $H^3(\calX,\mathbb{C})$ with components only of types $(1,2)$ and $(2,1)$, so that  $H^3(\calX, \mathbb{C})$ is isomorphic to $H^1(X_0(n)^+,i^+_* \mathbb{V}^+_0) \oplus \mathcal{H}$.

Such a decomposition of Hodge structures also exists for a generic deformation of $\calX$, as such deformations are also fibred by $M_n$-polarized K3 surfaces. However, Bardelli has proven \cite[Proposition 1.1.2]{gghcfttcb} that this implies that a generic deformation has trivial Kodaira-Spencer map; in other words, $\calX$ must be rigid by the local Torelli theorem for Calabi-Yau manifolds.
\end{proof}

\begin{corollary} \label{cor:nonrigid}
If $\calX$ is a smooth, non-rigid Calabi-Yau threefold fibred non-isotrivially by $M_n$-polarized K3 surfaces, $n$ must be an element of the set 
\[\{1 ,2, 3, 4, 5, 6, 7, 8, 9, 11\}.\]
\end{corollary}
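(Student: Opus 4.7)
The plan is to derive this corollary as an immediate consequence of the two preceding theorems, since the hard work has already been done. Concretely, I would proceed as follows.

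First, I would separate out the case $n = 1$: this case is allowed in the statement simply because Theorems \ref{thm:H1CY3} and \ref{thm:rigid} explicitly exclude it (as noted after Theorem \ref{genfuninv}), and the conclusion of the corollary already lists $1$ among the permitted values, so nothing needs to be checked.

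Next, assume $n \geq 2$. By Theorem \ref{thm:H1CY3}, the hypothesis that $\calX$ is a smooth Calabi-Yau threefold fibred non-isotrivially by $M_n$-polarized K3 surfaces forces
\[ n \in \{2,3,4,5,6,7,8,9,11\} \cup \{10,12,13,14,15,16,17,19,23\}, \]
corresponding respectively to the two cases $H^1(X_0^+(n),i^+_*\mathbb{V}_n^+)=0$ and $H^1(X_0^+(n),i^+_*\mathbb{V}_n^+)=2$.

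Finally, I would invoke Theorem \ref{thm:rigid} to rule out the second set: for each $n \in \{10,12,13,14,15,16,17,19,23\}$, that theorem asserts that any smooth Calabi-Yau threefold fibred non-isotrivially by $M_n$-polarized K3 surfaces is rigid, contradicting the non-rigidity hypothesis on $\calX$. Hence $n$ must lie in $\{2,3,4,5,6,7,8,9,11\}$, and combining with the $n=1$ case gives the claimed list. There is no real obstacle here — the entire content of the corollary is a two-line bookkeeping exercise once Theorems \ref{thm:H1CY3} and \ref{thm:rigid} are in hand.
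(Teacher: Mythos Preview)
Your proposal is correct and matches the paper's approach: the corollary is stated without proof in the paper precisely because it follows immediately from Theorems \ref{thm:H1CY3} and \ref{thm:rigid}, with the case $n=1$ included trivially since those theorems exclude it.
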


\begin{remark} Note that several examples of smooth, non-rigid Calabi-Yau threefolds fibred non-isotrivially by $M_1$-polarized K3 surfaces are given in \cite[Theorem 5.20]{flpk3sm}. The remaining cases will be addressed in the rest of this paper.
\end{remark}


\begin{remark} \label{rem:mirrors}
Recall that one may associate two invariants to a smooth Fano threefold $Y$: the \emph{degree} $\delta := (-K_Y)^3$ and the \emph{index} $\rho$, defined to be the largest integer such that there exists an ample divisor $H$ with $-K_Y \sim \rho H$ in $\Pic(Y)$. A third invariant, which we call the \emph{hypersurface degree}, may be formed as the combination
\[n := \frac{\delta}{2\rho^2}.\]
$2n$ may be interpreted as the degree of a generic anticanonical K3 hypersurface in $Y$, with the polarization induced by $H$.

Using the well-known classification by Iskovskih \cite{f3f1, f3f2}, one may easily compute the hypersurface degrees for all smooth Fano threefolds of Picard rank $1$ (c.f. \cite[Theorem 2.1]{cpmd}). The set of integers one obtains are precisely those appearing in Corollary \ref{cor:nonrigid}. We postulate that this is not a coincidence. Indeed, if $Y$ is a Fano threefold of Picard rank $1$ and hypersurface degree $n$, then a generic anticanonical hypersurface in $Y$ is a smooth K3 surface with Picard group of rank $1$ and Picard lattice isomorphic to $\langle 2n \rangle$. According to Dolgachev \cite{mslpk3s}, the mirror of a K3 surface with Picard group isomorphic to $\langle 2n \rangle$ is an $M_n$-polarized K3 surface. In other words, the $M_n$-polarized K3 surfaces which appear as the fibres of  smooth, non-rigid Calabi-Yau threefolds fibred non-isotrivially by $M_n$-polarized K3 surfaces are precisely those which are mirror to anticanonical K3 surfaces in smooth Fano threefolds of Picard rank $1$. This mirror correspondence is closely related to the Fano-LG correspondence for smooth Fano threefolds of Picard rank $1$, it is explored further in \cite{mstdfcym,mpcytmpqft} (see also Remark \ref{rem:mirrors2}, below).
\end{remark}

\subsection{A further restriction}\label{sec:furtherrestriction}

In order to achieve our goal of completely classifying non-rigid Calabi-Yau threefolds admitting non-isotrivial fibrations by $M_n$-polarized K3 surfaces with $n \geq 2$, we need to place some additional restrictions on the form of the generalized functional invariant maps $g \colon \PP^1 \to \calM_{M_n}$ that may arise.

We begin by setting up some extra notation to describe these generalized functional invariants. Note first that, for $n \in \{2,3,4,5,6,7,8,9,11\}$, the moduli space $\calM_{M_n}$ has orbifold points of types $(2,\ldots,2,a,\infty)$, where $\infty$ denotes a cusp (which has order $\infty$) and $a \geq 2$ is an integer or $\infty$ (in the case where $\calM_{M_n}$ contains two cusps). A complete list of these orbifold points is given later, in Table \ref{tab:orbipoints}. Fix a parameter $\lambda$ on $\calM_{M_n}$ so that the $a$-orbifold point occurs at $\lambda = 0$, the cusp occurs at $\lambda = \infty$, and the $2$-orbifold points occur at $\lambda = \lambda_1,\ldots,\lambda_q$. Note that in the case $n=4$, where we have two cusps of different widths, the cusp of width $1$ occurs at $\lambda = \infty$ and the cusp of width $2$ occurs at $\lambda = 0$.

Now suppose that $g \colon \PP^1 \to \calM_{M_n}$ is a $d$-fold cover and let $[x_1,\ldots,x_k]$, $[y_1,\ldots,y_l]$, $[z_{1,1},\ldots,z_{m_1,1}],\ldots,[z_{1,1},\ldots,z_{m_q,q}]$ denote the partitions of $d$ that encode the ramification profiles of $g$ over $\lambda = \infty$ (the cusp), $\lambda = 0$ (the $a$-orbifold point) and $\lambda = \lambda_1,\ldots,\lambda_q$ (the $2$-orbifold points) respectively. Let $r$ denote the degree of ramification of $g$ away from $\lambda \in \{0,\infty,\lambda_1,\ldots,\lambda_q\}$, defined as
\[r := \sum_{\mathclap{\substack{p \in \PP^1 \\ g(p) \notin  \{0,\infty,\lambda_1,\ldots,\lambda_q\}}}} (e_p -1),\]
where $e_p$ denotes the ramification index of $g$ at the point $p \in \PP^1$.

The Hurwitz formula immediately implies that the relation
\begin{equation}\label{eq:hurwitz} k+l+m_1+\cdots + m_q-qd-r-2=0\end{equation}
holds between these variables. Moreover, we have:

\begin{theorem} \label{thm:partitions} Suppose that $\calX$ is a smooth non-rigid Calabi-Yau threefold fibred non-isotrivially by $M_n$-polarized K3 surfaces, with $n \geq 2$. Then the ramification profile $[y_1,\ldots,y_l]$ of the generalized functional invariant map of $\calX$ over the $a$-orbifold point in $\calM_{M_n}$ must be one of the possibilities listed in Table \ref{tab:partitions}.
\end{theorem}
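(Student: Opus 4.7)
The plan is to translate the Calabi-Yau condition into a numerical bound on the $(3,0)$-part of a certain weight $3$ Hodge structure built out of $g^*\VV_n^+$, and then to use the explicit local monodromy data computed in Section \ref{sec:localmonodromy} to convert this bound into combinatorial restrictions on the partition $[y_1, \dots, y_l]$.

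First I would invoke Proposition \ref{prop:pullback} and Lemma \ref{lem:hodgeinjection} (exactly as in the proofs of Theorems \ref{thm:H1CY3} and \ref{thm:rigid}) to identify the transcendental variation of Hodge structure on $\calT(\calX^U)$ with $g^*\VV_n^+$, and to obtain an injection
\[
H^1(\PP^1, j_* g^*\VV_n^+) \hookrightarrow H^3(\calX, \CC)
\]
whose cokernel is concentrated in bidegrees $(1,2)$ and $(2,1)$. Since $\calX$ is Calabi-Yau with $h^{3,0}(\calX) = 1$, the $(3,0)$-part of the left-hand side must be exactly one-dimensional. Combined with the rank formula (\ref{eq:poincare}), this caps the sum $\sum_{p_i} R(p_i)$ in terms of $h^{2,1}(\calX)$ and $g(\PP^1) = 0$.

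Second I would analyse the local monodromy of $g^*\VV_n^+$ at each preimage of a special point of $\calM_{M_n}$. At a preimage of the $a$-orbifold with ramification index $y$, this monodromy is $\gamma_a^y$, where $\gamma_a$ is the finite-order matrix for the $a$-orbifold listed in Section \ref{sec:localmonodromy}. Its Jordan form, and hence the local invariant $R(p_i)$ and the local contribution to the $(3,0)$-part at $p_i$, depends only on the residue of $y$ modulo $\mathrm{ord}(\gamma_a)$. An entirely parallel analysis at preimages of the $2$-orbifolds (whose $\gamma$ is an involution) and the cusp (whose $\gamma$ is a single unipotent Jordan block of size $3$) pins down the remaining local contributions.

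Third, I would combine the global bound $h^{3,0} = 1$, the Poincaré-type formula (\ref{eq:poincare}) and the Hurwitz relation (\ref{eq:hurwitz}) to bound both $l$ and each $y_j$ from above in every case $n \in \{2,3,\dots,9,11\}$ allowed by Corollary \ref{cor:nonrigid}. The problem then reduces to a finite enumeration, and the combinatorial classification of admissible local profiles contributing a single holomorphic $3$-form to the total space, carried out in \cite{hnpfe}, isolates precisely the entries recorded in Table \ref{tab:partitions}.

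The main obstacle will be the delicate local Hodge-theoretic bookkeeping at preimages of the $a$-orbifold in the three exceptional cases $n \in \{2,3,4\}$, where $\mathrm{ord}(\gamma_a)$ equals $4$, $6$, and $2$ respectively and the $(3,0)$-contribution from a ramified preimage depends sensitively on $y_j \bmod \mathrm{ord}(\gamma_a)$ together with the extra sign coming from the Fricke involution (cf.\ Remark \ref{rem:iota}); it is precisely this step for which the results of \cite{hnpfe} are invoked, to avoid a lengthy case-by-case analysis of Jordan forms.
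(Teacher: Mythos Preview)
Your proposal and the paper's proof ultimately rest on the same external input: \cite[Corollary 4.4]{hnpfe}. The paper's proof is in fact nothing more than a one-line citation of that result (together with \cite[Section 4.1]{hnpfe}); all of the Hodge-theoretic setup you describe is context that the paper leaves implicit.

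One step in your setup is slightly misdirected, however. The Poincar\'e-type formula (\ref{eq:poincare}) computes the \emph{total rank} of $H^1(\PP^1, j_*g^*\VV_n^+)$, which for a smooth Calabi-Yau equals $2 + 2h^{2,1}(\calX)$. Since $h^{2,1}$ is not bounded a priori (non-rigid only says $h^{2,1}\geq 1$), this does not cap $\sum_{p_i} R(p_i)$ in any useful way, and in particular cannot by itself bound $l$ or the $y_j$. The constraint that actually does the work is $h^{3,0}=1$ alone, and extracting $h^{3,0}$ from local monodromy data requires a different local-to-global ingredient --- essentially a Riemann--Roch/parabolic-degree computation for the canonically extended Hodge line bundle $\widetilde{\calF}^2$ on $\PP^1$ --- rather than the rank formula. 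That is precisely what \cite{hnpfe} supplies. So your third and fourth paragraphs are where the real argument lives, and the detour through (\ref{eq:poincare}) in your first paragraph can be dropped.
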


\begin{table}
\begin{tabular}{|c|l|}
\hline
$n$ & Partitions \\
\hline
$2$ & $[1,1]$, $[1,2]$, $[1,3]$, $[1,4]$, $[2,2]$, $[2,3]$ $[2,4]$, $[3,3]$, $[3,4]$ $[4,4]$,$[5]$, $[6]$, $[7]$, $[8]$ \\
$3$ & $[1,1]$, $[1,2]$, $[1,3]$, $[2,2]$, $[2,3]$, $[3,3]$, $[4]$, $[5]$, $[6]$ \\
$4$ & $[1,1]$, $[1,2]$, $[2,2]$, $[3]$, $[4]$ \\
$5$ & $[1,1]$, $[1,2]$, $[2,2]$, $[3]$, $[4]$ \\
$6$ & $[1,1]$, $[2]$ \\
$7$ & $[1,1]$, $[2]$, $[3]$ \\
$8$ & $[1,1]$, $[2]$ \\
$9$ & $[1,1]$, $[2]$ \\
$11$ & $[1,1]$, $[2]$ \\
\hline
\end{tabular}
\caption{Allowable partitions $[y_1,\ldots,y_l]$ of $d$}
\label{tab:partitions}
\end{table}

\begin{proof} This is a simple exercise using \cite[Corollary 4.4]{hnpfe} (see also \cite[Section 4.1]{hnpfe}).
\end{proof}

\section{Modular families of lattice polarized K3 surfaces}\label{sec:modularfamilies}

Next we turn our attention to the converse question: when do smooth non-rigid Calabi-Yau threefolds fibred non-isotrivially by $M_n$-polarized K3 surfaces satisfying the conditions of the previous section actually exist? To answer this question, we will attempt to find an explicit construction for these threefolds.

The construction of such Calabi-Yau threefolds is a subject to which we have already given some attention: several explicit examples for $2 \leq n \leq 4$ are studied in \cite[Theorem 5.10]{flpk3sm} and the case $n = 2$ is treated in detail in \cite{cytfmqk3s}.

By Theorem \ref{genfuninv}, if $\pi\colon \calX \to B$ is such a Calabi-Yau threefold, the family $\pi^U\colon \calX^U \to U$ is given by the pull-back of the modular family $\calX_n \to \calM_{M_n}$ (constructed in Section \ref{sec:modularinvariants}) by the generalized functional invariant map $g \colon U \to \calM_{M_n}$. Thus in order to construct $\calX$, we begin by studying the families $\calX_n$.

\subsection{The modular families $\calX_n$ for small $n$} \label{sec:fundfam}

We will show that the modular families $\calX_n$ agree with a set of families first computed by Przyjalkowski \cite{wlgmsft} in the context of Landau-Ginzburg models; these are defined as the maximal crepant resolutions of the families $\bar{\calX}_n$ given explicitly in Table \ref{tab:families}. In this table, the ``Ambient space'' column gives the ambient space of the fibres and $\lambda \in \CC \cup \{\infty\}$ denotes a parameter on the base $\calM_{M_n}$ of the family, chosen so that the $a$-orbifold point and the cusp occur at $\lambda = 0$ and $\lambda = \infty$ respectively. For future reference, we also record the singularities occurring in the generic fibres of the families $\bar{\calX}_n$ in Table \ref{tab:Mnsingularities}.

\begin{table}
\begin{tabular}{|c|c|l|}
\hline
$n$ & Ambient space & Family \\
\hline
$2$ &$\PP^3[x,y,z,t]$& $t^4 + \lambda xyz(x+y+z-t) = 0$ \\
$3$ & $\PP^1[r,s] \times \PP^2[x,y,z]$& $s^2z^3 + \lambda r(r-s)xy(z-x-y) = 0$ \\
$4$ & $\prod_{i=1}^3\PP^1[r_i,s_i]$ & $s_1^2s_2^2s_3^2 - \lambda r_1(s_1-r_1)r_2(s_2-r_2)r_3(s_3-r_3) = 0$ \\
$5$ & $\PP^3[x,y,z,t]$ & $(x^2+xy + xz + xt + yz + yt + zt)^2 - \lambda xyzt = 0$ \\
$6$ & $\PP^3[x,y,z,t]$ & $(x+z+t)(x+y+z+t)(z+t)(y+z) - \lambda xyzt = 0$ \\
$7$ & $\PP^3[x,y,z,t]$ & $ (x+y+z+t)(y+z+t)(z+t)^2+$ \\ 
& & \hfill$+ (x+y+z+t)^2yz -  \lambda xyzt = 0$\\
$8$ & $\PP^3[x,y,z,t]$ & $(x+y+z+t)(x+t)(y+t)(z+t) - \lambda xyzt = 0$ \\
$9$ & $\PP^3[x,y,z,t]$ & $(x+y+z)(xt^2+yt^2+zt^2+xyt+xzt+yzt+xyz) -$ \\
& &\hfill $-\lambda xyzt = 0$ \\
$11$ & $\PP^3[x,y,z,t]$ &$(z+t)(x+y+t)(xy+zt)+x^2y^2+xyz^2+3xyzt -$ \\
& & \hfill$- \lambda xyzt = 0$ \\
\hline
\end{tabular}
\caption{The families $\bar{\calX}_n$}
\label{tab:families}
\end{table}

\begin{table}
\begin{tabular}{|c|l|}
\hline
$n$ & Singularities \\
\hline
$2$ & $6 \times A_3$ \\
$3$ &  $3 \times A_1$, $6 \times A_2$\\
$4$ & $12 \times A_1$\\
$5$ &  $3 \times A_1$, $3 \times D_4$ \\
$6$ &  $3 \times A_1$, $2 \times A_2$, $2 \times A_3$\\
$7$ &  $1\times A_1$, $3 \times A_2$, $1 \times A_3$, $1 \times A_4$\\ 
$8$ &  $6 \times A_1$, $3 \times A_2$\\
$9$ &  $3 \times A_1$, $3 \times A_2$, $1 \times D_4$ \\
$11$ & $2\times A_1$, $1 \times A_2$, $2 \times A_3$ \\
\hline
\end{tabular}
\caption{Singularities of a generic fibre of $\bar{\calX}_n$.}
\label{tab:Mnsingularities}
\end{table}

Let $\hat{\calX}_n \to \calM_{M_n}$ denote the maximal crepant resolution of $\bar{\calX}_n$. The singular fibres of $\hat{\calX}_n$  occur over the orbifold points of $\calM_{M_n}$. The types of these orbifold points are shown in the ``Orbifold type'' column of Table \ref{tab:orbipoints}.  The locations of the $2$-orbifold points $\lambda_1,\ldots,\lambda_q$ in these families are listed in the $\lambda_i$ column of Table \ref{tab:orbipoints}.

\begin{table}
\begin{tabular}{|c|c|l|}
\hline
$n$ & Orbifold type & $\lambda_i$ \\
\hline
$2$ &$(2,4,\infty)$& $256$ \\
$3$ & $(2,6,\infty)$& $108$ \\
$4$ & $(2,\infty,\infty)$ & $64$ \\
$5$ & $(2,2,2,\infty)$ & $22 + 10\sqrt{5}$, $22 - 10\sqrt{5}$ \\
$6$ & $(2,2,\infty,\infty)$ & $17 + 12\sqrt{2}$, $17 - 12\sqrt{2}$ \\
$7$ & $(2,2,3,\infty)$ & $-1$, $27$ \\ 
$8$ & $(2,2,\infty,\infty)$ & $12 + 8\sqrt{2}$, $12 - 8\sqrt{2}$ \\
$9$ & $(2,2,\infty,\infty)$ & $9 + 6\sqrt{3}$, $9 - 6\sqrt{3}$ \\
$11$ & $(2,2,2,2,\infty)$ &Roots of the cubic $\lambda^3-20\lambda^2+56\lambda-44$ \\
\hline
\end{tabular}
\caption{Orbifold points of $\calM_{M_n}$.}
\label{tab:orbipoints}
\end{table}

Let $U_{M_n}$ denote the open subset of $\calM_{M_n}$ obtained by removing these orbifold points. The restriction $\hat{\calX}_{n}^U$ of $\hat{\calX}_n$ to $U_{M_n}$ is a family of smooth K3 surfaces. The following proposition shows that these families are the fundamental modular families $\calX_n \to \calM_{M_n}$ that we have been seeking. In light of this result, in the following sections we will omit the hat and simply denote these families by $\calX_n$, and their restrictions to $U_{M_n}$ by $\calX_n^U$.

\begin{proposition} \label{prop:modularity} For $n \in \{2,3,4,5,6,7,8,9,11\}$, the families $\hat{\calX}_n^U \to U_{M_n}$ defined as above are $M_n$-polarized families of K3 surfaces in the sense of Definition \ref{defn:L-pol}. Moreover, these families are modular, in the sense that their generalized functional invariant maps are isomorphisms. 
\end{proposition}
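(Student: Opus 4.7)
The plan is to reduce the statement to the Clingher-Doran classification invoked in the discussion preceding Lemma \ref{lemma:lat}. Recall that every $M$-polarized K3 surface is a maximal crepant resolution of some quartic $X(a,b,d)$, that the $M_n$-polarized K3 surfaces are cut out by a specific curve $C_n \subset \WP(2,3,6)$, and that by Lemma \ref{lemma:lat} the tautological map $\calM_{M_n} \to C_n$ has degree $1$. For each $n$ in the list, the proposition therefore reduces to showing that the generic fibre of $\bar\calX_n$ admits an $M_n$-polarization whose Clingher-Doran coordinates $(a_n(\lambda), b_n(\lambda), d_n(\lambda))$ trace out $C_n$ injectively as $\lambda$ varies.

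To produce the $M_n$-polarization, I would first identify a natural elliptic fibration on each generic fibre of $\bar\calX_n$ coming from the structure of its defining equation (for instance, a pencil of hyperplane or hypersurface sections in the ambient toric variety). After resolving the singularities of Table \ref{tab:Mnsingularities}, two reducible fibres should provide an $E_8 \oplus E_8$ sublattice of $\NS$, while a section and a general fibre class provide the hyperbolic summand $H$, so that $\NS$ of the generic fibre contains $M = H \oplus E_8 \oplus E_8$. An additional $(-2n)$-class -- typically a rational curve on the smooth K3 not contained in any reducible fibre -- then extends this to a primitive copy of $M_n$. The triviality of the resulting local subsystem in the sense of Definition \ref{defn:L-pol} is automatic from the uniformity of the construction.

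To establish modularity, I would bring each elliptic fibration into Weierstrass form, read off the Clingher-Doran triple $(a_n(\lambda), b_n(\lambda), d_n(\lambda))$, and verify that distinct $\lambda$ give distinct points of $C_n \subset \WP(2,3,6)$. Since $\calM_{M_n} \cong C_n$ is a rational curve and the parametrization $\lambda \mapsto [a_n(\lambda):b_n(\lambda):d_n(\lambda)]$ is non-constant and lands in $C_n$, this forces the generalized functional invariant map to be an isomorphism. The cases $n \in \{2,3,4\}$ are already treated in \cite[Section 5.4]{flpk3sm}; for the remaining cases, the families of Table \ref{tab:families} originate in Przyjalkowski's Landau-Ginzburg constructions, which provides a strong starting point.

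The principal obstacle is the explicit computation of $(a_n(\lambda), b_n(\lambda), d_n(\lambda))$ for the more involved families, particularly $n \in \{7, 9, 11\}$, whose equations in Table \ref{tab:families} do not immediately display an elliptic fibration with two $E_8$-type reducible fibres. In practice this step is done with computer algebra: one resolves the singular locus of a generic fibre, isolates the $H \oplus E_8 \oplus E_8$ sublattice of $\NS$, and then identifies the distinguished $(-2n)$-class giving the $\langle -2n \rangle$ summand. Once this data is produced, the remainder of the argument is formal and applies uniformly across the list.
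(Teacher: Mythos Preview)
Your strategy is a legitimate alternative, but it diverges substantially from the paper's argument, and the difference is worth noting. For the $M_n$-polarization, the paper simply cites \cite[Section 5.4]{flpk3sm} for $n\in\{2,3,4\}$ and Doran et al.\ \cite{mft} for $n\in\{5,6,7,8,9,11\}$, so neither proof carries out the elliptic-fibration analysis you outline. The real divergence is in the modularity step. You propose to compute the Clingher--Doran coordinates $(a_n(\lambda),b_n(\lambda),d_n(\lambda))$ explicitly and check injectivity into $C_n$. The paper instead works Hodge-theoretically: by Przyjalkowski's identification of the holomorphic periods of $\calX_n^U$ with the $I$-series of the corresponding Fano threefolds, the Picard--Fuchs operator of $\calX_n^U$ is Golyshev's regularized quantum differential operator, from which the orders of the local monodromies of $\calT(\calX_n^U)$ can be read off by Frobenius analysis. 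These orders match the monodromy profile of $\VV_n^+$ computed in Section~\ref{sec:localmonodromy}. Since $\calT(\calX_n^U)=g^*\VV_n^+$ by Proposition~\ref{prop:pullback}, the problem becomes: when can $g^*\VV_n^+$ have the same unordered list of local monodromy orders as $\VV_n^+$? Two short Riemann--Hurwitz lemmas (one for the case of at least two infinite-order points, covering $n\in\{6,8,9\}$; one for the case of one infinite-order point and at least three finite-order points, covering $n\in\{5,7,11\}$) force $\deg g=1$.

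What each approach buys: yours is conceptually direct and, if completed, would yield explicit period-map formulas, but as you acknowledge it requires nontrivial case-by-case computer algebra, especially for $n\in\{7,9,11\}$ where the two-$\mathrm{II}^*$ elliptic fibration is not visible in the equations of Table~\ref{tab:families}. The paper's route avoids all of this by importing Golyshev's already-tabulated operators and reducing the degree-one claim to a purely combinatorial statement about covers of $\PP^1$; no explicit Weierstrass forms or Clingher--Doran triples are ever written down. One small caution in your write-up: the sentence ``non-constant and lands in $C_n$, this forces the generalized functional invariant map to be an isomorphism'' is not correct as stated---non-constancy alone does not give degree one. You do say earlier that you would verify injectivity, which is what is actually needed; just make sure the final formulation reflects that.
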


The rest of this subsection is devoted to a proof of this proposition. For $n \in \{2,3,4\}$, Proposition \ref{prop:modularity} was proved in \cite[Section 5.4]{flpk3sm}. So suppose that $n \in \{5,6,7,8,9,11\}$. In these cases, Doran et al. \cite{mft} have shown that  $\hat{\calX}_n^U \to U_{M_n}$ is an $M_n$-polarized family of K3 surfaces. It therefore only remains to prove modularity of these families.

To do this, we begin by showing that, if $p \in \calM_{M_n}$ is an orbifold point with order $k$ (as given in Table \ref{tab:orbipoints}), the local monodromy of the transcendental local system $\calT(\hat{\calX}_n^U)$ has order $k$ around $p$.

To compute the orders of the local monodromies of $\calT(\hat{\calX}_n^U)$, we turn to the properties of the families $\hat{\calX}_n^U$. Przyjalkowski \cite{wlgmsft} constructs these families as \emph{weak Landau-Ginzburg models} of certain rank $1$ Fano threefolds. In particular, he shows that the holomorphic periods of the families $\hat{\calX}_n^U$ are equal to the $I$-series of the corresponding Fano threefolds. It follows that the regularized quantum differential operators associated to these Fano threefolds, which annihilate the $I$-series, are Picard-Fuchs differential equations for the families $\hat{\calX}_n^U$. These regularized quantum differential operators are computed for all rank $1$ Fano threefolds by Golyshev \cite{cpmd}; the family $\hat{\calX}_n^U$ corresponds to row $n$ of the $d=1$ table in \cite[Section 5.8]{cpmd}.

Once one has the Picard-Fuchs differential equations for the families $\hat{\calX}_n^U$, computation of the local monodromies of $\calT(\hat{\calX}_n^U)$ is a straightforward application of the Frobenius method. Combining the result of this calculation with the computation of the local monodromies of the local systems $\VV^+_n$ performed in Section \ref{sec:localmonodromy}, we conclude that $\calT(\hat{\calX}_n^U)$ and $\VV_n^+$ have the same \emph{monodromy profiles} for all $n \in \{5,6,7,8,9,11\}$.

\begin{defn} Let $\VV$ be a local system. The \emph{monodromy profile} of $\VV$ is the unordered list of the orders of the local monodromy matrices of $\VV$ (excluding, of course, points where this monodromy has order $1$).
\end{defn}

Moreover, we know from Proposition \ref{prop:pullback} that $\calT(\hat{\calX}_n^U)$ is the pull-back  of $\VV_n^+$ under the generalized functional invariant map $g$ of $\hat{\calX}_n^U$. Proposition \ref{prop:modularity} will therefore follow if we can prove that $g^*\VV^+_n$ can only have the same monodromy profile as $\VV_n^+$ (for $n \in \{5,6,7,8,9,11\}$) if $g$ is an isomorphism.

We will establish this fact through a pair of lemmas. The first lemma deals with the cases $n \in \{6,8,9\}$.

\begin{lemma}\label{lemma:2infinite}
Let $\mathbb{V}$ be a local system on a Zariski open subset of $\mathbb{P}^1$, which has at least two local monodromy matrices of infinite order and at least one other nontrivial local monodromy matrix. Let $g \colon \mathbb{P}^1 \rightarrow \mathbb{P}^1$ be a morphism. Then ${g}^* \mathbb{V}$ has the same monodromy profile as $\mathbb{V}$ if and only if $g$ is an isomorphism.
\end{lemma}
\begin{proof}
By the Riemann-Hurwitz formula, our hypotheses imply that
\[-2 = -2d + \sum_{p \in \PP^1}(e_p- 1),\]
where $d = \deg (g)$ and $e_p$ is the ramification index of $g$ at $p$. Let $\Sigma^\infty_g$ be the set of points $p \in \mathbb{P}^1$ such that the local monodromy of $\mathbb{V}$ around $g(p)$ is of infinite order. Then ${g}^* \mathbb{V}$ has $|\Sigma^\infty_g|$ points whose local monodromy matrices are of infinite order. If ${g}^* \mathbb{V}$ has the same number of points with monodromy of infinite order as $\mathbb{V}$, it follows that $g$ must be totally ramified at each point in $\Sigma_g^\infty$. Therefore, if there are $N$ points of $\mathbb{P}^1$ around which the local monodromy of $\mathbb{V}$ is infinite order, then
\[2(d-1) = N(d-1) + \sum_{p \in \PP^1 \setminus \Sigma_g^\infty} (e_p - 1).\]
Thus if $N>2$, we must have $d = 1$, so $g$ is an isomorphism. Moreover, if $N=2$, there can be no ramification of $g$ outside of $\Sigma_g^\infty$. If this is the case, the number of points of ${g}^* \mathbb{V}$ around which monodromy is of finite order is $d$ times the number of points in $\VV$ around which monodromy is of finite order. These numbers are obviously only equal if $d = 1$, so $g$ is an isomorphism in this case as well.
\end{proof}

This lemma does not apply when we have only one cusp, a situation which occurs whenever $n$ is prime \cite[Section 3.8]{fcmf}. We therefore need a second lemma to deal with the remaining cases, where $n \in \{5,7,11\}$.

\begin{lemma}\label{lemma:manyfinite}
Let $\mathbb{V}$ be a local system on a Zariski open subset of $\mathbb{P}^1$, such that one of the local monodromy matrices of $\mathbb{V}$ is of infinite order and there are at least three points in $\mathbb{P}^1$ at which the local monodromy matrices of $\mathbb{V}$ have finite order $\geq 2$.  Let $g \colon \mathbb{P}^1 \rightarrow \mathbb{P}^1$ be a morphism. Then ${g}^* \mathbb{V}$ has the same monodromy profile as $\mathbb{V}$ if and only if $g$ is an isomorphism.
\end{lemma}
\begin{proof}

For the same reason as in Lemma \ref{lemma:2infinite}, if $p_\infty \in \mathbb{P}^1$ is the point where the local monodromy of $\VV$ is of infinite order, $g^{-1}(p_\infty)$ is a single point and $g$ is ramified to order $d := \deg g$ at $g^{-1}(p_\infty)$. Therefore, the Riemann-Hurwitz formula gives
\[d-1 = \sum_{p \in \PP^1 \setminus g^{-1}(p_\infty)} (e_p -1).\]

Let $N$ be the number of points in $\mathbb{P}^1$ around which the local monodromy of $\mathbb{V}$ is of finite order $\geq 2$ and let $\Sigma_g^{< \infty}$ be the preimages of these points under $g$. Then the above equation gives
\[d - 1 \geq \sum_{p \in \Sigma_g^{< \infty}} (e_p -1) = dN - |\Sigma_g^{< \infty}|,\]
and thus
\[|\Sigma^{<\infty}_g|\geq d(N-1) + 1.\]

Moreover, if ${g}^* \mathbb{V}$ has the same monodromy profile as $\mathbb{V}$, there are at most $N$ points in $\Sigma_g^{<\infty}$ at which $g$ is unramified. If this bound is achieved, every other point in $\Sigma_g^{< \infty}$ has ramification degree at least $2$. Therefore,
\[N + 2(|\Sigma_g^{<\infty}| - N)  \leq \sum_{p \in \Sigma_g^{< \infty}} e_p = dN.\]
Thus 
\[|\Sigma_g^{<\infty}| \leq \dfrac{N(d+1)}{2}.\]
Combining the two inequalities involving $|\Sigma_g^{< \infty}|$, we obtain
\[d(N-1) + 1 \leq \dfrac{N(d+1)}{2}.\]
If $d \neq 1$, this can be rearranged to give $N \leq 2$. Therefore, if $N \geq 3$ as in the statement of the lemma, we must have $d = 1$, so $g$ is an isomorphism.
\end{proof}

This completes the proof of Proposition \ref{prop:modularity}.

\subsection{Singular fibres of $\calX_n$}

We wish to see which of the generalized functional invariant maps allowed by the restrictions of Sections \ref{sec:CYbound} and \ref{sec:furtherrestriction} can actually underlie non-rigid Calabi-Yau threefolds fibred by $M_n$-polarized K3 surfaces. As such threefolds are birational to the pull-backs of the fundamental families $\calX_n$ under the generalized functional invariant map (by Theorem \ref{genfuninv}), we begin by performing a detailed study of these fundamental families and their pull-backs under various covers.  

Of particular interest is the behaviour of these families, and their covers, in neighbourhoods of the orbifold points in $\calM_{M_n}$. The fibres over such points will, in general, be singular. We will examine each of these singular fibres in turn, beginning with the simplest kind: those lying over the $2$-orbifold points $\lambda_i$.

\begin{proposition} \label{prop:lambdafibres} Let $\Delta \subset \calM_{M_n}$ denote a small disc around one of the $2$-orbifold points $\lambda = \lambda_i$ listed in Table \ref{tab:orbipoints}. If $n \neq 7$, the threefold $\calX_n$ is smooth over $\Delta$ and its singular fibre over $\lambda = \lambda_i$ is a singular K3 surface containing an $A_1$ singularity. If we pull-back $\calX_n \to \calM_{M_n}$ by a $\mu$-fold cover $\Delta' \to \Delta$ ramified totally over $\lambda = \lambda_i$, the total space of the pulled-back threefold will contain an isolated $cA_{\mu-1}$ singularity.

In the case $n = 7$, the fibre over $\lambda = 27$ behaves as above. However, the fibre over $\lambda = -1$ contains an $A_2$ singularity and the threefold $\calX_n$ contains an isolated node \textup{(}$cA_1$\textup{)} singularity over this point. If we pull-back $\calX_n \to \calM_{M_n}$ by a $\mu$-fold cover $\Delta' \to \Delta$ ramified totally over $\lambda = -1$, the total space of the pulled-back threefold will contain an isolated $cA_{2\mu-1}$ singularity.
\end{proposition}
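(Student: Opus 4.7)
The proof is essentially a case-by-case computational check using the explicit equations for $\bar{\calX}_n$ in Table \ref{tab:families} and the $2$-orbifold points recorded in Table \ref{tab:orbipoints}. For each relevant $n$ and each $\lambda_i$, one would first substitute $\lambda = \lambda_i$ into the defining equation and apply the Jacobian criterion to the resulting fibre equation. In addition to the generic singularities from Table \ref{tab:Mnsingularities}, which are ADE and are resolved crepantly in passing to $\calX_n$, the calculation should reveal a single extra isolated singular point on the special fibre at a location disjoint from the generic singular locus. A short Taylor expansion around this point identifies its analytic type as $A_1$ in every case except $n = 7$ at $\lambda = -1$, where it is $A_2$.

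The next step is to analyse the total space near this extra point. One computes the partial derivative of the defining equation with respect to $\lambda$ there. Apart from the single exceptional case $n = 7, \lambda = -1$, this derivative is nonzero, so $\bar{\calX}_n$ (and hence $\calX_n$, since the local point is untouched by the crepant resolution of the generic singularities) is smooth on the whole disc $\Delta$ and, in appropriate local analytic coordinates, the total space equation takes the normal form $uv - w^2 - t = 0$ with $t$ a coordinate vanishing at $\lambda_i$. Pulling back by the totally ramified cover $t = s^d$ immediately yields $uv - w^2 - s^d = 0$, the standard form of a $cA_{d-1}$ compound Du Val singularity. In the exceptional case, the $\lambda$-derivative vanishes; here one computes the Hessian of the defining equation at the singular point, checks that it has full rank $4$, and hence identifies the total space singularity as an ordinary node $(cA_1)$. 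A deformation-theoretic analysis of how an $A_2$ fibre may smooth to a node then produces a local normal form $uv - w^3 - tw - t^2 = 0$, and pulling back by $t = s^d$ gives $uv = w^3 + s^d w + s^{2d}$, which is a $cA_{2d-1}$ singularity.

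The main obstacle is the sheer bulk of the case-by-case verification in the first two steps: each of the nine values of $n$ and each of its associated $\lambda_i$ must be treated separately, although each individual computation is entirely routine. A secondary subtlety appears in the exceptional case $n = 7, \lambda = -1$: one must verify that the coefficient of $tw$ in the local expansion of the total space equation is actually nonzero (equivalently, that the Hessian at the singular point is nondegenerate), since this is precisely what ensures that the total space is a node and what produces the exponent $2d-1$, rather than something smaller, after the pullback.
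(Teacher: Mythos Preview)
Your outline is sound and matches the paper for all cases except the exceptional one; the paper itself dismisses every case other than $n=7$, $\lambda=-1$ as routine and treats it exactly as you describe.

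The gap is in your handling of $n=7$, $\lambda=-1$. You assume throughout that the extra singular point on the special fibre sits at a location \emph{disjoint} from the curves of generic singularities in Table~\ref{tab:Mnsingularities}, and is therefore untouched by the crepant resolution $\bar\calX_7\rightsquigarrow\calX_7$. This is precisely what fails here. The extra singularity occurs at $(x,y,z,t)=(0,1,-1,0)$, which lies \emph{on} one of the three sections of $cA_2$ singularities in $\bar\calX_7$: what happens is that the $cA_2$ along this curve jumps to a $cA_3$ when $\lambda=-1$. In particular, the Hessian of the defining polynomial of $\bar\calX_7$ at this point has rank at most $2$ (the point is not even an isolated singularity of $\bar\calX_7$), so your proposed rank-$4$ check cannot succeed. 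The isolated node in $\calX_7$ only appears \emph{after} one has performed the blow-ups resolving the generic $cA_2$ curve; it is the residue of the $cA_3$ jump. The correct argument is therefore to model the $cA_2\to cA_3$ degeneration along the curve, carry out the resolution, and then read off the residual singularity and its behaviour under base change.

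Your proposed local normal form $uv=w^3+tw+t^2$ also cannot be right as a model for $\calX_7$ near the node with its fibration: this hypersurface has a second isolated singular point at $(u,v,w,t)=(0,0,\tfrac{2}{9},-\tfrac{4}{27})$, contradicting smoothness of $\calX_7$ over the punctured disc, and its pullback under $t=s^2$ gives $uv=w^3+s^2w+s^4$, whose zero curve has three branches with distinct tangents at the origin, hence not a $cA_3$.
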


\begin{proof} The only difficult case here is when $n = 7$ and $\lambda = -1$. In this case, by Table \ref{tab:Mnsingularities}, $\bar{\calX}_7$ contains six curves of $cA_m$ ($m=1,2,3,4$) singularities over $\Delta$. When $\lambda = -1$, the singularity along one of the $cA_2$ curves (where $(x,y,z,t) = (0,1,-1,0)$) jumps to a $cA_3$. After blowing-up these six curves of singularities, we are thus left with an isolated $cA_1$ singularity over $\lambda = -1$ in $\calX_7$. The fibre of $\calX_7$ over $\lambda = -1$ is a singular K3, containing an $A_2$ singularity. Given this, the statement about the covers is easy. \end{proof}

\begin{remark} \label{singularityrem} In the case $n=7$, the threefold $\calX_n$ contains an isolated $cA_1$ singularity. Working \emph{analytically}, it is always possible to find a small crepant resolution of this singularity, to obtain a smooth threefold with singular fibre a nodal K3 surface as in the other cases. However, it may not be possible to perform such a resolution \emph{algebraically}.
\end{remark}

The next type of singular fibre that we turn our attention to is the fibres over the cusp $\lambda = \infty$. As we shall see, these fibres all admit a simple common description.

\begin{proposition} \label{prop:infinityfibres}  Let $\Delta \subset \calM_{M_n}$ denote a small disc around the cusp $\lambda = \infty$. The threefold $\calX_n$ is smooth over $\Delta$ and its singular fibre over $\lambda = \infty$ is a semistable K3 surface of Type III, containing $n+2$ components. If we pull-back $\calX_n \to \calM_{M_n}$ by a $\mu$-fold cover $\Delta' \to \Delta$ ramified totally over $\lambda = \infty$ and crepantly resolve any resulting singularities, the resulting threefold is again smooth and contains a semistable fibre of Type III with $n\mu^2 + 2$ components.
\end{proposition}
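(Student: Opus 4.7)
The plan is to establish the two assertions separately: first the structure of the central fibre of $\calX_n$ over $\lambda=\infty$, and then the effect of a totally ramified base change.

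For the first part, I would work case-by-case from the explicit equations in Table \ref{tab:families}. Setting $\mu = 1/\lambda$ as a local parameter, each equation rearranges to the form $\mu F_n = G_n$, where $G_n$ is the monomial expression appearing next to $\lambda$ in the table; the scheme-theoretic central fibre is therefore cut out by $G_n=0$ and is visibly a union of toric strata of the ambient space. Taking the maximal crepant resolution of $\bar{\calX}_n$ (resolving the curves of $A_k$ and $D_4$ singularities listed in Table \ref{tab:Mnsingularities}) and further resolving any isolated total-space singularities that appear above $\lambda=\infty$, I would then verify directly that (a) the resulting total space is smooth along the entire central fibre, (b) the central fibre is reduced and has simple normal crossings, (c) it consists of exactly $n+2$ smooth surface components, and (d) its double locus is a disjoint union of smooth rational curves while its triple locus is a finite set of points. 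These conditions exhibit the central fibre as a Kulikov degeneration; relative triviality of the canonical sheaf, together with the infinite-order local monodromy computed in Section \ref{sec:localmonodromy}, then forces Type III via the Kulikov--Persson--Pinkham classification, so the dual complex is a triangulation of $S^2$.

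For the second part, the standard semistable reduction picture applies: a degree-$d$ base change $z \mapsto z^d$ applied to a Type III fibre produces a $cA_{d-1}$ singularity transverse to each component of the double locus, together with compound $cA$-type singularities at the triple points, all of which admit small crepant resolutions at least analytically (cf.\ Remark \ref{singularityrem}). Combinatorially, this has the effect of subdividing each triangle of the dual complex into $d^2$ congruent sub-triangles. If $V$, $E$, $F$ denote the vertex, edge and face counts of the original triangulation of $S^2$, then $V-E+F=2$ combined with $3F=2E$ and $V=n+2$ forces $E=3n$ and $F=2n$. Subdivision introduces $d-1$ new vertices on each old edge and $(d-1)(d-2)/2$ interior vertices per old triangle, so the new vertex count is
\[V' = (n+2) + 3n(d-1) + 2n \cdot \frac{(d-1)(d-2)}{2} = (n+2) + n(d-1)(d+1) = nd^2+2,\]
as required.

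The main technical obstacle lies in the case-by-case verification of (a)--(d). The subtle cases are those in which the naive central fibre $\{G_n=0\}$ has fewer than $n+2$ irreducible components; for instance, when $n=5$ we have $G_n = xyzt$, giving only the four coordinate planes of $\PP^3$, whereas we require $n+2=7$ components. In such cases the missing components must be produced by the crepant resolution of the non-isolated singularities of $\bar{\calX}_n$ (the three curves of $D_4$ singularities for $n=5$) that specialize into the central fibre. Tracking exactly how these resolutions interact with $\{G_n=0\}$, and confirming that the resulting degenerate fibre is simple normal crossings with dual complex triangulating $S^2$, is the central computational difficulty; the remaining ingredients -- the monodromy identification, the invocation of Kulikov--Persson--Pinkham, and the combinatorial count under subdivision -- are then formal.
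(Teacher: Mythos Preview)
Your proposal is essentially correct and, for the first assertion, follows the same case-by-case strategy as the paper: start from the explicit equations of $\bar{\calX}_n$, resolve the curves of Du Val singularities from Table \ref{tab:Mnsingularities}, track the extra components appearing over $\lambda=\infty$, and read off the normal-crossing structure. The paper does not explicitly invoke Kulikov--Persson--Pinkham; it simply exhibits the triple points directly in each case, but your route via the monodromy computation is a legitimate shortcut once semistability is established.

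The genuine difference is in the second assertion. The paper simply cites \cite[Proposition 1.2]{bgd7} as a black box, whereas you unpack its content: the dual complex of a Type III fibre is a triangulation of $S^2$, a degree-$d$ base change barycentrically refines each triangle into $d^2$ sub-triangles, and the Euler-characteristic constraints $V-E+F=2$, $3F=2E$, $V=n+2$ then force $V'=nd^2+2$. This is exactly what the cited result encodes, and making it explicit is arguably more informative. One terminological slip: the resolutions along the double curves after base change are \emph{not} small---they introduce the $d-1$ new exceptional components per edge that your vertex count relies on---so ``small crepant resolutions'' should read ``crepant resolutions'' (with small resolutions reserved for any residual isolated terminal points). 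Relatedly, the hedge ``at least analytically'' and the pointer to Remark \ref{singularityrem} are unnecessary here: for a semistable K3 degeneration the base-change resolution is projective, which is precisely the content of \cite[Proposition 1.2]{bgd7}.
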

\begin{proof} Once we have proved the form of the fibres over $\Delta$, the form of the fibres over the $\mu$-fold cover $\Delta' \to \Delta$ follows immediately from \cite[Proposition 1.2]{bgd7}. We also note that the number of components appearing in the fibre of $\calX_n$ over $\lambda = \infty$ has previously been computed by Przyjalkowski in \cite[Corollary 35]{cyctlgmsft}, but he does not precisely describe these fibres; our computations corroborate his result. 

To find the precise forms of the fibres over $\Delta$, we proceed case-by-case, indexed by $n$. The resolutions presented here are relatively simple to compute, if tedious; for brevity we only sketch the details.

\begin{enumerate}[leftmargin=3em]
\item[(${M}_2$)]Follows from the proof of \cite[Proposition 2.5]{cytfmqk3s}.

\item[($M_3$)] By Table \ref{tab:Mnsingularities}, over $\Delta$ the threefold $\bar{\calX}_3$ contains nine curves of $cA_m$ ($m=1,2$) singularities, which form sections of the fibration. Away from these nine curves, $\bar{\calX}_3$ is nonsingular over $\Delta$. 
After performing $m$ crepant blow-ups on each of these curves, we are left with nine isolated singularities of type $cA_1$ lying over $\lambda = \infty$. These may be resolved by a small projective resolution, after which the threefold $\calX_3$ becomes smooth over $\Delta$, with a semistable singular fibre of Type III consisting of five components arranged as a triangular prism.

\item[($M_4$)] By Table \ref{tab:Mnsingularities}, over $\Delta$ the threefold $\bar{\calX}_4$ contains twelve curves of $cA_1$ singularities, which form sections of the fibration. Away from these twelve curves, $\bar{\calX}_4$ is nonsingular over $\Delta$. 
After performing a crepant blow-up on each of these curves, we are left with twelve isolated $cA_1$ singularities over $\lambda = \infty$. These admit a small projective resolution, after which the threefold $\calX_4$ is smooth over $\Delta$, with a semistable singular fibre of Type III consisting of six components arranged in a cube.

\item[($M_5$)]By Table \ref{tab:Mnsingularities}, over $\Delta$ the threefold $\bar{\calX}_5$ contains three curves of $cA_1$ singularities, which we denote $C^{A_1}_i$ ($i = 1,2,3$), and three curves of $cD_4$ singularities, which we denote $C^{D_4}_j$ ($j=1,2,3$), all of which form sections of the fibration. The fibre of  $\bar{\calX}_5$ over $\lambda = \infty$ has four components, given by the four coordinate hyperplanes in $\mathbb{P}^3$, arranged as a tetrahedron, and the three curves $C^{D_4}_j$ pass through three of the vertices. Moreover, 
the singularities along the curves $C^{D_4}_j$ jump to canonical threefold singularities when $\lambda = \infty$. 

These singularities may be crepantly resolved as follows. First perform a single embedded blow-up along each of the curves $C^{D_4}_j$. Each of these blow-ups is crepant and gives rise to two exceptional divisors: one lying over the curve $C^{D_4}_j$, and one lying over the point of intersection between $C^{D_4}_j$ and the fibre $\lambda = \infty$. After these blow-ups, we are left with twelve curves of generically $cA_1$ singularities: the three $C^{A_1}_i$ and three coming from each $C^{D_4}_j$. These may each be crepantly resolved by a single blow-up, leaving twelve isolated $cA_1$ singularities, which may be resolved by small projective blow-ups. 

After doing this, the threefold $\calX_5$ becomes smooth over $\Delta$ and its fibre over $\lambda = \infty$ has seven components: the strict transforms of the original four and three additional components coming from the vertices that lie on the $C^{D_4}_j$ ($j=1,2,3$). This fibre is semistable of Type III and has seven components arranged as a tetrahedron with three truncated vertices.

\item[($M_6$)] By Table \ref{tab:Mnsingularities}, over $\Delta$ the threefold $\bar{\calX}_6$ contains three curves of $cA_1$ singularities, denoted $C^{A_1}_1, C^{A_1}_2, C^{A_1}_3$, two curves of $cA_2$'s, denoted $C^{A_2}_1, C^{A_2}_1$, and two curves of $cA_3$'s, denoted $C^{A_3}_1,C^{A_3}_2$, which form sections of the fibration. There are also two further curves of $cA_1$ singularities lying in the fibre over $\lambda = \infty$, given by $D_1 := \{y = z = 0,\ \lambda = \infty\}$ and $D_2 := \{z = t = 0,\ \lambda = \infty\}$.

The fibre of $\bar{\calX}_6$ over $\lambda = \infty$ has four components arranged as a tetrahedron.   The curves $C^{A_3}_1$ and $D_2$ both pass through the point $x=z=t=0$ (which is a vertex of the tetrahedron), and the curves $C^{A_2}_1$, $D_1$ and $D_2$ all pass through the vertex $y=z=t=0$. Both of these vertices are canonical threefold singularities.

 
These singularities may be crepantly resolved by first performing $m$ blow-ups of each of the curves $C^{A_m}_i$, then blowing up the curves $D_j$ once each, and finally performing small projective resolutions on any isolated terminal singularities that remain. After doing this, the threefold $\calX_6$ becomes smooth over $\Delta$ and its fibre over $\lambda = \infty$ is semistable of Type III with eight components: the original four, plus one each from the curves $D_j$ ($j=1,2$) and one each from the vertices $x=z=t=0$ and $y=z=t=0$.

\item[($M_7$)] By Table \ref{tab:Mnsingularities}, over $\Delta$ the threefold $\bar{\calX}_7$ contains one curve of $cA_1$ singularities, denoted $C^{A_1}_1$, three curves of $cA_2$'s, denoted $C^{A_2}_1,C^{A_2}_2$, one curve of $cA_3$'s, denoted $C^{A_3}_1$, and one curve of $cA_4$'s, denoted $C^{A_4}_1$, all of which form sections of the fibration. 

To simplify the computation, first introduce a new variable $u := x + y + z + t$, then change coordinates $u \mapsto \lambda u$; this is an isomorphism on a general fibre and performs a birational modification to the fibre over $\lambda = \infty$,  but this birational modification does not affect the number of components appearing in a crepant resolution. The resulting family is given by
\begin{equation}\label{eq:M7modified} \lambda u^2yz + u(y+z+t)(z+t)^2 - (\lambda u - y - z- t)yzt = 0. \end{equation}
 
The fibre of this new family over $\lambda = \infty$ consists of four components arranged as a tetrahedron. In addition to the $C^{A_m}_i$, there are three curves of $cA_1$ singularities lying in this fibre, given by $D_1:=\{u=z=0\}$, $D_2 := \{u=t=0\}$, and $D_3 := \{u=y=0\}$. The curves $C^{A_4}_1$, $D_1$ and $D_2$ all pass through the vertex $u=z=t=0$, which is a canonical threefold singularity. 

These singularities may be crepantly resolved as follows. First perform $m$ blow-ups along each of the curves $C^{A_m}_j$, then blow up each of the $D_j$ once. Finally, perform small projective resolutions on the isolated terminal singularities that remain. After this process the threefold $\calX_7$ becomes smooth over $\Delta$. Its fibre over $\lambda = \infty$ is semistable of Type III with nine components: the strict transforms of the original four, one component from each of the three $D_j$, and two additional components from the point $u=z=t=0$. 

\item[($M_8$)] By Table \ref{tab:Mnsingularities}, over $\Delta$ the threefold $\bar{\calX}_8$ contains six curves of $cA_1$ singularities, denoted $C^{A_1}_i$ ($i \in \{1,\ldots,6\}$), and three curves of $cA_2$'s, denoted $C^{A_2}_i$ ($i \in \{1,2,3\}$), which form sections of the fibration. There are also three additional curves $D_j$ ($j=1,2,3$) of $cA_1$ singularities lying in the fibre $\lambda = \infty$, given by $\{x = t = 0\}$, $\{y=t=0\}$ and $\{z=t=0\}$ respectively.

The fibre of $\bar{\calX}_8$ over $\lambda = \infty$ has four components, arranged as a tetrahedron, and the three $C^{A_2}_i$ pass through the three vertices $x=y=t=0$, $x=z=t=0$ and $x=y=t=0$, which are all canonical singularities. 

These singularities admit a crepant resolution as follows. First perform $m$ blow-ups of each of the curves $C^{A_m}_i$, then blow up each of the curves $D_j$ once. Finally, perform small projective resolutions on the isolated terminal singularities that remain. After doing this, the threefold $\calX_8$ becomes smooth over $\Delta$. Its fibre over $\lambda = \infty$ is semistable of Type III with ten components: the strict transforms of the original four, one component from each of the three $D_i$, and one component from each of the three vertices $C^{A_2}_i \cap D_i \cap D_j$.

\item[($M_9$)] By Table \ref{tab:Mnsingularities}, over $\Delta$ the threefold $\bar{\calX}_9$ contains three curves of $cA_1$ singularities, denoted $C^{A_1}_i$ ($i \in \{1,2,3\}$), three curves of $cA_2$ singularities, denoted $C^{A_2}_i$ ($i \in \{1,2,3\}$),  and one curve of $cD_4$ singularities, denoted $C^{D_4}$, all of which form sections of the fibration. There are also three additional curves $D_j$ ($j=1,2,3$) of $cA_1$ singularities in the fibre over $\lambda = \infty$, where $x = t = 0$, $y=t=0$ and $z=t=0$ respectively.

The fibre of $\bar{\calX}_8$ over $\lambda = \infty$ has four components, arranged as a tetrahedron. The three $C^{A_2}_i$ pass through the three vertices $x=y=t=0$, $x=z=t=0$ and $x=y=t=0$, and $C^{D_4}$ passes through the fourth vertex $x=y=z=0$. 

These singularities admit a crepant resolution as follows. First perform $m$ blow-ups of each of the curves $C^{A_m}_i$, then blow up the curves $D_j$ once each. Finally, perform an embedded blow-up along the curve $C^{D_4}$. This gives rise to an additional component over the vertex $\lambda=\infty$, $x=y=z=0$ and three disjoint curves of $cA_1$ singularities. These three curves may then be resolved by one further blow-up each. Finally, perform small projective resolutions on the isolated terminal singularities that remain. After this, the threefold $\calX_9$ becomes smooth over $\Delta$. Its fibre over $\lambda = \infty$ is semistable of Type III with eleven components: the strict transforms of the original four, one component from each of the three $D_j$, and one component from each of the four vertices of the tetrahedron.

\item[($M_{11}$)]By Table \ref{tab:Mnsingularities}, over $\Delta$ the threefold $\bar{\calX}_{11}$ contains  two curves of $cA_1$ singularities, denoted $C^{A_1}_1,C^{A_1}_2$, one curve of $cA_2$ singularities, denoted $C^{A_2}_1$,  and two curves of $cA_3$ singularities, denoted $C^{A_3}_1,C^{A_3}_2$, all of which form sections of the fibration. There are also four additional curves $D_j$ ($j=1,2,3,4$) of $cA_1$ singularities where $\lambda = \infty$ and $x = t = 0$, $y=t=0$, $x=z=0$, and $y=z=0$ respectively.

The fibre of $\bar{\calX}_{11}$ over $\lambda = \infty$ has four components, arranged as a tetrahedron. The curve $C^{A_2}_1$ passes through the vertex $x=y=t=0$, which is a canonical singularity, and the two curves $C^{A_3}_i$ ($i = 1,2$) pass through the vertices $x=z=t=0$ and $y=z=t=0$, each of which is a canonical singularity.


These singularities may be crepantly resolved as follows. First perform $m$ blow-ups along each of the curves $C^{A_m}_i$. After doing this, resolve the curves $D_j$ by blowing up once each. Finally, perform small projective resolutions on any isolated terminal singularities that remain. After this, the threefold $\calX_{11}$ becomes smooth over $\Delta$ and its fibre over $\lambda = \infty$ is semistable of Type III with thirteen components: the strict transforms of the original four, four additional components coming from the four $D_i$, one component from the vertex $x=y=t=0$ (where $C^{A_2}_1$, $D_1$ and $D_2$ meet), and two components from each of the two vertices $x=z=t = 0$ (where $C^{A_3}_1$, $D_1$ and $D_3$ meet) and $y = z= t = 0$ (where $C^{A_3}_2$, $D_2$ and $D_4$ meet).
\end{enumerate}

This completes the proof of Proposition \ref{prop:infinityfibres}.\end{proof}

The last, and most complex, singular fibres lie over the orbifold point $\lambda = 0$. In this case, Theorem \ref{thm:partitions} heavily restricts the degrees of the possible covers that we need to consider.

\begin{proposition}\label{prop:0fibres} Let $\Delta \subset \calM_{M_n}$ denote a small disc around the point $\lambda = 0$. The number of components of the singular fibre of $\calX_n$ over $\lambda = 0$ is given by the $\mu=1$ column of Table \ref{tab:0fibres}. 

After pulling-back $\calX_n \to \calM_{M_n}$ by a $\mu$-fold cover $\Delta' \to \Delta$ ramified totally over $\lambda = 0$, we may perform a birational modification \textup{(}that only affects the fibre over $\lambda = 0$\textup{)} so that the relative canonical bundle of the resulting threefold over $\Delta'$ is trivial and, unless marked otherwise \textup{(}see below\textup{)}, the resulting threefold is smooth. In this case, the number of components of the fibre over $\lambda = 0$ is given by the column corresponding to $\mu$ in Table \ref{tab:0fibres}.

Moreover, in Table \ref{tab:0fibres} 
\begin{itemize}
\item in cases marked with an asterisk \textup{(}$\mbox{}^*$\textup{)}, the corresponding fibre is semistable of Type III or, if there is only one component, is a smooth K3 surface; and
\item there are several possibilities for fibres having only one component. A fibre denoted $1^*$ \textup{(}resp. $1$\textup{)} is a smooth \textup{(}resp. nodal\textup{)} K3 surface in a smooth total space. A fibre denoted $1^{*\dagger}$ \textup{(}resp. $1^{\dagger}$\textup{)} is a singular K3 surface in a singular total space, that admits a small analytic resolution to a smooth \textup{(}resp. nodal\textup{)} K3 surface in a smooth total space. Note that in all cases marked with $\mbox{}^{\dagger}$ it may not be possible to perform such resolutions algebraically, see Remark \ref{singularityrem}.
\end{itemize}
\end{proposition}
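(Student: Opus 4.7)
The strategy is to follow the same pattern as in the proof of Proposition \ref{prop:infinityfibres}, carrying out a case-by-case analysis for each $n \in \{2,3,4,5,6,7,8,9,11\}$. For each $n$, I would begin by substituting $\lambda = 0$ into the defining equation of $\bar{\calX}_n$ from Table \ref{tab:families}; the resulting equation factorizes as a monomial in the ambient coordinates whose factors record the components of the central fibre and their multiplicities, with combinatorics reflecting the order $a$ of the orbifold point at $\lambda = 0$. I would then locate the singular locus of $\bar{\calX}_n$ in a neighbourhood of this fibre: the singularities along the sections tabulated in Table \ref{tab:Mnsingularities} account for the generic singularities, while additional curves of $cA_1$ singularities will typically appear inside the central fibre along intersections of its components.

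Next, a crepant resolution is constructed exactly as in Proposition \ref{prop:infinityfibres}: blow up each curve $C^{A_m}$ of singularities $m$ times, handle each $C^{D_4}$ by a single embedded blow-up followed by resolving the three resulting curves of $cA_1$ singularities, blow up all additional curves of singularities lying inside the central fibre, and finally perform small projective resolutions on any remaining isolated terminal singularities. Counting the components of the resulting fibre in each case yields the $d=1$ column of Table \ref{tab:0fibres}. For the four cases $n \in \{4,6,8,9\}$ with $a = \infty$ at $\lambda = 0$, the point $\lambda = 0$ is a genuine cusp, and so the entire analysis of the $d$-fold base change reduces directly to that of Proposition \ref{prop:infinityfibres} together with \cite[Proposition 1.2]{bgd7}; in particular every fibre is semistable of Type III (hence asterisked) with component count growing quadratically in $d$.

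For the remaining cases $n \in \{2,3,5,7,11\}$ with finite $a \in \{2,3,4,6\}$, I would analyse the base change directly. Pulling back by the $d$-fold totally ramified cover introduces predictable $cA$-type singularities along the preimages of the existing curves of singularities and along the preimages of the intersection curves of the central fibre components. A parallel sequence of crepant blow-ups, with multiplicities adjusted by the ramification, then produces a model with trivial relative canonical bundle whose central fibre is the one recorded in Table \ref{tab:0fibres}. When $a \mid d$ the local monodromy becomes unipotent and the fibre is semistable of Type III, which accounts for the asterisks. The cases labelled $1$, $1^*$, $1^{\dagger}$, $1^{*\dagger}$ arise when $d$ is small enough relative to $a$ that the crepant model collapses to a single K3 surface, possibly with a residual isolated $cA_k$-type singularity on the total space that can either be resolved by a small projective blow-up (yielding $1$ or $1^*$) or only analytically (yielding $1^{\dagger}$ or $1^{*\dagger}$), in accordance with Remark \ref{singularityrem}.

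The main obstacle will be the combinatorial bookkeeping required to track how each successive blow-up interacts with the nearby curves of singularities and the components of the central fibre, and to verify that the resulting counts agree with every entry of Table \ref{tab:0fibres}; the case $n = 7$ at $\lambda = 0$, where $a = 3$ and the configuration of singularities is the richest, is likely the most delicate, together with the $n = 5$ case, where resolving the three $C^{D_4}$ curves interacts non-trivially with the structure of the fibre over $\lambda = 0$.
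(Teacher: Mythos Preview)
Your overall strategy (case-by-case analysis, crepant blow-ups along curves of $cA_m$ and $cD_4$ singularities, small resolutions of residual terminal points) matches the paper's approach, and your treatment of $n \in \{6,8,9\}$ is essentially what the paper does. However, there are two genuine gaps.

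\textbf{The $n=4$ case does not reduce to Proposition~\ref{prop:infinityfibres}.} You group $n=4$ with $n \in \{6,8,9\}$ on the grounds that $a=\infty$, and conclude that every entry is semistable of Type~III with component count growing quadratically in $d$. But the cusp at $\lambda=0$ for $n=4$ has \emph{width~$2$}: as computed in Section~\ref{sec:localmonodromy}, the local monodromy there is $-$(unipotent), not unipotent. Concretely, in $\bar{\calX}_4$ the three components $S_i = \{s_i=0\}$ of the $\lambda=0$ fibre all occur with multiplicity~$2$, so no crepant model over $\Delta$ can be semistable. This is why the $d=1$ and $d=3$ entries in Table~\ref{tab:0fibres} are \emph{not} asterisked, and why the numbers $15,\,3,\,18,\,6$ do not fit the $nd^2+2$ pattern. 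The paper treats $n=4$ by hand: for $d=1$ it resolves the configuration of $24$ curves of $cA_1$'s directly; for $d=2$ it normalizes (each $S_i$ becoming a double cover branched over the four $D_{ij}$) to obtain a genuine Type~III fibre with $3$ components, and only then invokes \cite[Proposition~1.2]{bgd7} for $d=4$; and $d=3$ is again handled explicitly.

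\textbf{The periodicity for finite $a$ comes from a weighted coordinate change, not from blowing up the pullback.} For $n \in \{2,3,5,7\}$ you propose to pull back and then blow up the resulting $cA$-configurations with ``multiplicities adjusted by the ramification''. The paper instead exploits substitutions of the form $t \mapsto \lambda t$, $s \mapsto \lambda s$, $z \mapsto \lambda z$, $u \mapsto u/\lambda$ on the singular family $\bar{\calX}_n$; these are isomorphisms on the generic fibre but birationally modify the fibre over $\lambda=0$, collapsing high-multiplicity components and reducing a $d$-fold cover to a $(d-a)$-fold cover (or to a simpler configuration). This is what produces the visible period-$a$ pattern in Table~\ref{tab:0fibres} (e.g.\ $31,11,4,1^*$ repeating for $n=2$) and makes cases like $n=2$, $d=7,8$ or $n=3$, $d=5,6$ tractable. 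Without this device your direct blow-up approach would face, for instance, an $8$-fold cover of the $31$-component $M_2$ fibre. A smaller point: the $1$ and $1^*$ entries already have \emph{smooth} total space (no residual singularity to resolve), while the $\dagger$ entries carry isolated $cA_{k}$'s with $k>1$ that do \emph{not} admit small projective resolutions; your description conflates these.
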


\begin{table}
\begin{tabular}{|c|c|c|c|c|c|c|c|c|}
\hline
 & \multicolumn{8}{c|}{Degree $\mu$ of ramified cover} \\
$n$ & $1$ & $2$ & $3$ & $4$ & $5$& $6$ & $7$ & $8$ \\
\hline
$2$ & $31$ & $11$ & $4$  & $1^*$ & $31$ & $11$ & $4$ & $1^*$ \\
$3$ & $21$ & $6$ & $1$ & $21$ & $6$ & $1^{*\dagger}$ & $-$ & $-$ \\
$4$ & $15$ & $3^*$ & $18$ & $6^*$ & $-$ & $-$ & $-$ & $-$\\
$5$ & $11$ & $1^*$ & $11$ & $1^*$ & $-$  & $-$ &$-$ & $-$ \\
$6$ & $8^*$ & $26^*$ & $-$ & $-$ & $-$ & $-$ &$-$ & $-$\\
$7$ & $6$ & $15$ & $1^{*\dagger}$ & $-$ & $-$ & $-$ &$-$ & $-$\\ 
$8$ & $4^*$ & $10^*$ & $-$ & $-$ & $-$ & $-$ &$-$ & $-$\\
$9$ & $3^*$ & $6^*$ & $-$ & $-$ & $-$ & $-$ &$-$ & $-$\\
$11$ & $1^{\dagger}$ & $1^{*\dagger}$ & $-$ & $-$ & $-$ & $-$ &$-$ & $-$\\
\hline
\end{tabular}
\caption{Number of components in the fibre over $\lambda = 0$ in $\calX_n$ and its ramified covers.}
\label{tab:0fibres}
\end{table}

\begin{proof} As in the proof of Proposition \ref{prop:infinityfibres}, we proceed by analyzing each case in turn, indexed by $n$. Note that in some of these cases, including all cases with $\mu =1$, the number of components in the singular fibre has previously been computed by Przyjalkowski \cite{wlgmsft}; in these cases our computations reproduce his result. As before, for brevity we only sketch the details of the computations.
\begin{enumerate}[leftmargin=3em]
\item[($M_2$)] The cases $\mu = 1,2,4,8$ were computed in the proof of \cite[Proposition 2.5]{cytfmqk3s}. 

In the case $\mu = 3$, after pulling-back $\bar{\calX}_2$ by a triple cover the threefold becomes non-normal along the locus $\lambda = w = 0$. Blow this locus up in the fourfold ambient space. After blowing up, the strict transform of our threefold is smooth away from the six curves of $cA_3$ singularities which form sections of the fibration, and its fibre over $\lambda = 0$ has five components: the strict transform of the original along with four new components. Of these, the component which is a strict transform of the original fibre is exceptional and may be contracted, leaving a threefold with trivial relative canonical bundle. The singular fibre of this threefold over $\lambda = 0$ has four components, arranged as a cone over four lines in $\PP^2$. 

In the cases $\mu = 5,6,7$, after making a change of coordinates $t \mapsto \lambda t$ and normalizing, we obtain a singular fibre with the same configuration of singularities as in the cases $\mu = 1,2,3$, respectively. These may be resolved in the same way, to give singular fibres with respectively $31$, $11$ and $4$ components.

\item[($M_3$)] In $\bar{\calX}_3$, the fibre $\lambda = 0$ has two components: a component $S := \{\lambda = s = 0\} \cong \PP^2$, which has multiplicity $2$, and a component $Z:= \{\lambda = z = 0\} \cong \PP^1 \times \PP^1$, which has multiplicity $3$. Moreover, over $\Delta$, the threefold $\bar{\calX}_3$ has $17$ curves of singularities: along with the six curves $C^{A_1}_i$ ($i \in \{1,\ldots,6\}$) of $cA_1$ and three curves $C^{A_2}_i$ ($i\in\{1,2,3\}$) of $cA_2$ singularities coming from Table \ref{tab:Mnsingularities}, which form sections of the fibration,  there are also three curves $D_i$ of $cA_1$ singularities in the component $S$ and five curves $E_i$ of $cA_2$ singularities in the component $Z$. The method of \cite[Section 1]{bgd7} may be used to simultaneously resolve all curves of $cA_1$'s, followed by all curves of $cA_2$'s, giving rise to a singular fibre with $21$ components: the strict transforms of $S$ and $Z$, one exceptional component from each of the three $D_i$, two exceptional components from each of the five $E_i$, and one exceptional component from each of the six points $C^{A_2}_i \cap E_j \cap E_k$.

In the case $\mu=2$, suppose that we pull-back $\bar{\calX}_3$ by a double cover $\Delta' \to \Delta$ ramified over $\lambda = 0$. The resulting threefold is non-normal along its central fibre. After performing a change of coordinates $s \mapsto \lambda s$, which has the effect of normalizing the pull-back of $S$ and contracting the pull-back of $Z$, we obtain a threefold with irreducible central fibre that contains ten curves of singularities over $\Delta'$: the pull-backs of the nine $C^{A_m}_i$ (note that the six $C^{A_2}_i$'s are no longer disjoint, but intersect in three pairs) and the curve $D := \{\lambda = r = z = 0\}$, which is a curve of $cA_2$ singularities. Once again, the configuration of curves of $cA_2$ singularities may be simultaneously resolved by the method of \cite[Section 1]{bgd7}, leaving three disjoint curves $C^{A_1}_i$ of $cA_1$'s, which may be resolved by one blow-up each. The result is a singular fibre with six components: the strict transform of $S$, two from the curve $D$, and one each from the three points $C^{A_2}_i \cap C^{A_2}_j \cap D$.

In the case $\mu=3$, suppose that we pull-back $\bar{\calX_3}$ by a triple cover $\Delta' \to \Delta$ ramified over $\lambda = 0$. The resulting threefold is non-normal along its central fibre. After performing a change of coordinates $z \mapsto \lambda z$, which has the effect of normalizing the pull-back of $Z$ and contracting the pull-back of $S$, we find that the resulting threefold is smooth away from the nine curves $C^{A_j}_i$ (note that the three $C^{A_1}_i$'s are no longer disjoint, and all intersect at the point $\lambda = s = x =y = 0$). Blow-up the six curves $C^{A_2}_i$ twice each, then simultaneously resolve the three curves $C^{A_1}_i$ by the method of \cite[Section 1]{bgd7} to obtain a smooth threefold whose singular fibre over $\lambda = 0$ is a nodal K3. 

Given this it is easy to see that, in the case $\mu=6$, the pull-back of $\calX_3$ by a $6$-fold cover $\Delta' \to \Delta$ is a singular threefold, with an isolated node ($cA_1$) singularity.

In the case $\mu=4$, suppose that we pull-back $\calX_3$ by a $4$-fold cover $\Delta' \to \Delta$ ramified over $\lambda = 0$. After making a change of coordinates $z \mapsto \lambda z$, we obtain a central fibre containing three curves of $cA_1$ singularities and five curves of $cA_2$ singularities, arranged in the same configuration as the unramified case. This may be crepantly resolved in the same way as the unramified case, to give a fibre with $21$ components. 

Finally, in the case $\mu=5$, suppose that we pull-back $\bar{\calX}_3$ by a $5$-fold cover $\Delta' \to \Delta$ ramified over $\lambda = 0$. After performing a change of coordinates $s \mapsto \lambda s$, $z \mapsto \lambda z$, we obtain a threefold with irreducible central fibre that contains ten curves of singularities over $\Delta'$: the pull-backs of the nine $C^{A_m}_i$ (as in the double covering case, the six $C^{A_2}_i$'s are no longer disjoint, but intersect in three pairs, and the three $C^{A_1}_i$'s intersect transversely in a triple point) and the curve $D := \{\lambda = r = z = 0\}$, which is a curve of $cA_2$ singularities. As in the double covering case, the configuration of curves of $cA_2$ singularities may be simultaneously resolved by the method of \cite[Section 1]{bgd7}, leaving the three curves $C^{A_1}_i$ of $cA_1$'s, which may be simultaneously resolved by the same method. The result is a singular fibre with six components: the original component, two from the curve $D$, and one each from the three points $C^{A_2}_i \cap C^{A_2}_j \cap D$.

\item[($M_4$)]  In $\bar{\calX}_4$ the fibre over $\lambda = 0$ has three components $S_1,S_2,S_3$, given by $S_i := \{\lambda = s_i = 0\} \cong \PP^1 \times \PP^1$, each of which has multiplicity $2$. Moreover, over $\Delta$ the threefold $\bar{\calX}_4$ has $24$ curves of singularities: along with the twelve curves $C_i$ ($i = 1,\ldots,12$) of $cA_1$ singularities from Table \ref{tab:Mnsingularities}, which form sections of the fibration, there are also four curves $D_{ij}$ ($i \in \{1,2,3\}$, $j \in \{1,\ldots,4\}$) of $cA_1$ singularities in each component $S_i$, corresponding to the intersections of $S_i$ with the hyperplanes $\{r_k = 0\}$ and $\{r_k = s_k\}$ for $k \neq i$. This configuration can be resolved by the method of \cite[Section 1]{bgd7}, giving rise to a singular fibre with $15$ components: the strict transforms of the three $S_i$ along with one exceptional component from each of the twelve $D_{ij}$.

In the case $\mu=2$, suppose that we pull-back $\bar{\calX}_4$ by a double cover $\Delta' \to \Delta$ ramified over $\lambda = 0$. The resulting threefold is non-normal. The normalization is singular only along the pull-backs of the curves $C_i$ and its central fibre consists of three components, each of which is a double cover of one of the $S_i$ ramified over the four curves $D_{ij}$ (each component thus obtained contains four $A_1$ singularities, corresponding to the intersections $D_{ij} \cap D_{ik}$. These coincide with the intersection of $S_i$ with the curves $C_l$, which are blown up in the next step). After blowing up the $C_i$, we obtain a smooth threefold whose central fibre is semistable of Type III, with three components meeting along three double curves and two triple points. 

Given this, the form of the fibre over the pull-back by a further double cover (corresponding to $\mu=4$) then follows easily from \cite[Proposition 1.2]{bgd7}. 

Finally, in the case $\mu=3$, suppose that we pull-back $\bar{\calX}_4$ by a triple cover $\Delta' \to \Delta$ ramified over $\lambda = 0$. The resulting threefold is again non-normal. After normalizing, the resulting fibre has $cA_1$ singularities along the twelve $C_i$, the twelve $D_{ij}$, and along the three intersections $S_i \cap S_j$. Crepantly resolving these singularities using the method of \cite[Section 1]{bgd7}, we obtain a central fibre with $18$ components: the original three, plus one from each of the twelve curves $D_{ij}$ and one from each of the three curves $S_i \cap S_j$.

\item[($M_5$)]  The fibre of $\calX_5$ over $\lambda = 0$ has been previously studied by Iliev, Katzarkov and Przyjalkowski \cite[Section 5.2]{dscnr}. They show that after a number of crepant blow-ups the threefold $\calX_5$ becomes smooth over $\Delta$ and its fibre over $\lambda = 0$ has $11$ components.

In the case $\mu=2$, suppose that we pull-back $\bar{\calX}_5$ by a double cover $\Delta' \to \Delta$ ramified over $\lambda = 0$. The resulting threefold is non-normal. After normalizing, the only singularities remaining are the six curves of $cA_1$ and $cD_4$ singularities coming from Table \ref{tab:Mnsingularities}. After blowing up these curves, the resulting threefold is smooth over $\Delta'$ with central fibre a smooth K3 surface. A further double cover preserves this smoothness, giving the case $\mu=4$. 

Finally, in the case $\mu=3$, suppose we pull-back $\bar{\calX}_5$ by a triple cover $\Delta' \to \Delta$ ramified over $\lambda = 0$. The resulting threefold is non-normal along its central fibre $\lambda = 0$. We may normalize by blowing up this fibre in the fourfold total space. The resulting threefold is normal and its fibre over $\lambda = 0$ contains seven curves of $cA_1$ singularities, arranged in the same configuration as the unramified case. This may be crepantly resolved by the method of \cite[Section 5.2]{dscnr}, to give a fibre with $11$ components.

\item[($M_6$)]  Note there is a natural involution of the family $\bar{\calX}_6$, given by 
\[ (x,y,z,t,\lambda) \longmapsto (y+z,-z-t,x+z+t,-x-y-z-t,\tfrac{1}{\lambda}).\]
This involution exchanges the fibres over $\lambda = 0$ and $\lambda = \infty$, which must therefore be isomorphic. Given this, the properties for the fibre over $\lambda = 0$ follow immediately from the properties of the fibre over $\lambda = \infty$, as computed in Proposition \ref{prop:infinityfibres}.

\item[($M_7$)]  The fibre of $\calX_7$ over $\lambda = 0$ is the most difficult. We once again use the simplified form \eqref{eq:M7modified}, but with an additional variable substitution $w := z+t$. The equation for the family becomes
\[\lambda u^2yz + uw^2(y + w) - yz(\lambda u - y - w)(w-z) = 0.\]
The fibre of this family over $\lambda = 0$ has two components, given by $\{y+w = 0\}$ and $\{(uw^2+yz(w-z) = 0\}$, which meet along the two curves $D_1 := \{y=w=0\}$ and $D_2 := \{y+w = uw-z(w-z) = 0\}$. Over $\Delta$ the threefold defined by this equation contains eight curves of singularities: along with the six curves $C^{A_m}_i$ ($m \in \{1,\ldots,4\}$, $i \in \{1,2,3\}$) of $cA_m$ singularities coming from Table \ref{tab:Mnsingularities}, which form sections of the fibration, there is an additional curve of $cA_2$ singularities along $D_1$ and a curve $D_3 := \{ w=z=0\}$ of $cA_1$ singularities.

This fibre admits a crepant resolution as follows. First blow up the curve $D_1$ twice and the curve $D_3$ once. After this, perform $m$ blow-ups of each of the curves $C^{A_m}_i$. Finally, any remaining isolated terminal singularities may be resolved by small projective resolutions. After this process, $\calX_7$ becomes smooth over $\Delta$, and its fibre over $\lambda = 0$ has six components: the strict transforms of the original two, two from $D_1$, one from $D_3$, and one from the point $y=z=w=0$ where $D_1$, $D_3$ and $C^{A_2}_1$ meet.

In the case $\mu=2$, suppose that we pull-back by a double cover. The fibre described above contains six components meeting along nine double curves. Under the cover, these double curves become curves of $cA_1$ singularities, which may be resolved by blowing up once each. The resulting fibre has $15$ components.

Finally, in the case $\mu=3$, suppose that we pull-back by a triple cover ramified over $\lambda = 0$. Perform another change of coordinates $u \mapsto \frac{u}{\lambda}$, $w \mapsto \lambda w$ and $y \mapsto \lambda y$. The resulting family is
\[u^2yz + uw^2(y+z) - yz(\lambda u - y - w)(\lambda w - z) = 0.\]
This family has an irreducible fibre over $\lambda = 0$ and the threefold total space is smooth away from the six curves $C^{A_m}_i$, which form sections of the fibration. However, there is some isolated singularity behaviour where the curves $C^{A_m}_i$ meet the fibre $\lambda = 0$: the curves $C_1^{A_1}$ and $C_1^{A_2}$ collide in a $cA_4$ singularity, the curves $C_2^{A_2}$ and $C_1^{A_3}$ collide in a $cA_6$ singularity, and the singularity along the curve $C_1^{A_4}$ jumps to a $cD_5$. After blowing up these three singularities, we are left with three isolated nodes ($cA_1$'s) in the threefold total space, corresponding to the three points where the isolated singularity behaviour occurs. The fibre over $\lambda = 0$ in this threefold is a singular K3 surface with three $A_1$ singularities, one for each threefold node.

\item[($M_8$)]  The fibre of $\calX_8$ over $\lambda = 0$ has been previously studied by Przyjalkowski \cite[Example 29]{wlgmsft}. He shows that after a number of crepant blow-ups the threefold $\calX_8$ becomes smooth over $\Delta$ and its fibre over $\lambda = 0$ is semistable with $4$ components, arranged as a tetrahedron. 

Given this, in the case $\mu=2$ the form of the fibre over the double cover $\Delta' \to \Delta$ follows immediately from \cite[Proposition 1.2]{bgd7}.

\item[($M_9$)]  The fibre of $\calX_9$ over $\lambda = 0$ has also been previously studied by Przyjalkowski \cite[Example 30]{wlgmsft}, who shows that after a number of crepant blow-ups the threefold $\calX_9$ becomes smooth over $\Delta$ and its fibre over $\lambda = 0$ is semistable with $3$ components. 

As before, in the case $\mu=2$, the form of the fibre over the double cover $\Delta' \to \Delta$ follows immediately from \cite[Proposition 1.2]{bgd7}.

\item[($M_{11}$)]  The fibre of $\bar{\calX}_{11}$ over $\lambda = 0$ is irreducible, and the only singularities of $\bar{\calX}_{11}$ over $\Delta$ are the five curves of $cA_m$ ($m=1,2,3$) singularities arising from Table \ref{tab:Mnsingularities} and one isolated node ($cA_1$) at the point $(x,y,z,t,\lambda) = (1,1,-1,0,0)$. After blowing up the five curves of $cA_m$ singularities, $\calX_{11}$ is left with a single isolated node ($cA_1$). Its fibre over $\lambda = 0$ is a singular K3 surface with an $A_2$ singularity.  

In the case $\mu=2$, if we pull-back $\calX_{11}$ by an double cover ramified over $\lambda = 0$, the fibre over $\lambda = 0$ is unchanged, but the resulting threefold will contain an isolated $cA_{3}$ singularity over $\lambda = 0$.
\end{enumerate}
This completes the proof of Proposition \ref{prop:0fibres}.\end{proof}

\section{Constructing Calabi-Yau threefolds} \label{sec:constCY3}

The aim of this final section is to use the results of the previous sections to explicitly classify nonrigid Calabi-Yau threefolds fibred non-isotrivially by $M_n$-polarized K3 surfaces, for $n \neq 1$. By Theorem \ref{genfuninv}, we know that any such Calabi-Yau threefold is birational to the pull-back $\bar{\calX}_{n,g}$ of one of the families $\bar{\calX}_n \to \calM_{M_n}$ under the generalized functional invariant map $g \colon \PP^1 \to \calM_{M_n}$, and Corollary \ref{cor:nonrigid} tells us that $n \in \{2,3,4,5,6,7,8,9,11\}$.

We are thus reduced to classifying the possible generalized functional invariant maps that give rise to Calabi-Yau threefolds in each case. We may use Theorem \ref{thm:partitions} to significantly reduce the number of cases that we need to consider. We thus obtain the following theorem, which should be thought of as a converse to Theorem \ref{thm:partitions}.

The generalized functional invariant maps $g$ appearing in this theorem are described using the notation introduced in Section \ref{sec:furtherrestriction}. Recall that $d$ denotes the degree of $g$ (note that $d \geq 2$, by Theorem \ref{thm:partitions}) and $[x_1,\ldots,x_k]$, $[y_1,\ldots,y_l]$, $[z_{1,1},\ldots,z_{m_1,1}],\ldots,[z_{1,1},\ldots,z_{m_q,q}]$ denote the partitions of $d$ that encode the ramification profiles of $g$ over $\lambda = \infty$ (the cusp), $\lambda = 0$ (the $a$-orbifold point) and $\lambda = \lambda_1,\ldots,\lambda_q$ (the $2$-orbifold points) respectively. Finally, $r$ denotes the degree of ramification of $g$ away from $\lambda \in \{0,\infty,\lambda_1,\ldots,\lambda_q\}$.

\begin{theorem} \label{thm:CY3} With notation as  in Section \ref{sec:furtherrestriction}, suppose $n \in \{2,3,4,5,6,7,8,9,11\}$,
\[k+l+m_1+\cdots + m_q-qd-r-2=0\]
and $[y_1,\ldots,y_l]$ is one of the partitions of $d$ listed in Table \ref{tab:partitions}. Then $\bar{\calX}_{n,g}$ is birational to a Calabi-Yau threefold $\calX_{n,g}$ with at worst $\QQ$-factorial terminal singularities. 

Moreover, if 
\begin{itemize}
\item $n \neq 7, 11$, and
\item $g$ is unramified over $\lambda \in \{\lambda_1,\ldots,\lambda_q\}$, and
\item we are not in the case $n = 3$, $l=1$, $[y_1] = [6]$,
\end{itemize}
then $\calX_{n,g}$ is smooth.
\end{theorem}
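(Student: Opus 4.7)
The plan is to construct $\calX_{n,g}$ from the naive pullback $\bar{\calX}_{n,g}$ by performing explicit local birational modifications over each preimage of an orbifold point of $\calM_{M_n}$, and then to verify the Calabi-Yau conditions globally. By Proposition \ref{prop:modularity}, the modular family $\calX_n \to \calM_{M_n}$ is smooth with smooth K3 fibres over the open locus $U_{M_n} = \calM_{M_n}\setminus \{0,\infty,\lambda_1,\ldots,\lambda_q\}$, so $\bar{\calX}_{n,g}$ is automatically smooth and K3-fibred over $g^{-1}(U_{M_n}) \subset \PP^1$; the problem thus reduces to analysing and modifying $\bar{\calX}_{n,g}$ in a neighbourhood of each of the finitely many preimages of the special points.

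For each $p \in \PP^1$ with $g(p) \in \{0,\infty,\lambda_1,\ldots,\lambda_q\}$, the restriction of $\bar{\calX}_{n,g}$ to a small analytic disc around $p$ is isomorphic to the pullback of $\bar{\calX}_n$ under an $e_p$-fold cyclic cover of a disc around $g(p)$, where $e_p$ is the ramification index of $g$ at $p$. Propositions \ref{prop:lambdafibres}, \ref{prop:infinityfibres} and \ref{prop:0fibres} describe an explicit birational modification of each such local model, producing a family with trivial relative dualizing sheaf whose total space is either smooth or has only isolated $\QQ$-factorial terminal singularities (of type $cA_m$). Under the stronger hypotheses of the second half of the theorem, every local model produced is in fact smooth: the excluded cases, namely $n \in \{7,11\}$, ramified covers over some $\lambda_i$, and $n=3$ with $[y_1]=[6]$, are precisely those producing the entries marked with $\mbox{}^{\dagger}$ in Table \ref{tab:0fibres} and the exceptional behaviour described in Proposition \ref{prop:lambdafibres}. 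The hypothesis that $[y_1,\ldots,y_l]$ appears in Table \ref{tab:partitions} ensures that the ramification indices over $\lambda = 0$ all fall within the range covered by Proposition \ref{prop:0fibres}. Glueing these local models to the smooth part of $\bar{\calX}_{n,g}$ over $g^{-1}(U_{M_n})$ produces the desired threefold $\calX_{n,g}$.

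The final task is to verify the Calabi-Yau conditions $\omega_{\calX_{n,g}} \cong \calO_{\calX_{n,g}}$ and $H^1(\calX_{n,g},\calO_{\calX_{n,g}}) = 0$. Since the general fibre is a smooth K3, adjunction gives $\omega_{\calX_{n,g}} \cong \pi^*L$ for some $L \in \Pic(\PP^1)$; a trivialization may be exhibited by pulling back a natural meromorphic three-form on $\bar{\calX}_n$ and tracking its zero-pole divisor on $\bar{\calX}_{n,g}$. The local modifications from Propositions \ref{prop:lambdafibres}--\ref{prop:0fibres} were set up so that all contributions to this divisor coming from the preimages of the special points cancel, leaving a total degree equal to $\deg\omega_{\PP^1} = -2$ plus corrections indexed by the ramification data $k, l, m_1,\ldots,m_q$ and $r$. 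The Hurwitz-type relation $k+l+m_1+\cdots+m_q-qd-r-2=0$ is precisely the condition for this total degree to vanish, giving $L \cong \calO_{\PP^1}$. The vanishing $H^1(\calX_{n,g},\calO) = 0$ then follows from the Leray spectral sequence for $\pi$, using $H^1(\PP^1,\calO) = 0$ together with the generic vanishing of $R^1\pi_*\calO$ over the locus of smooth K3 fibres. The main obstacle lies in the degree computation of the previous sentence: one must verify case-by-case that the specific crepant blow-ups, small resolutions and contractions chosen in the proofs of Propositions \ref{prop:lambdafibres}--\ref{prop:0fibres} really do produce exactly the Hurwitz-type contributions claimed, across each $n$ and each allowed ramification pattern.
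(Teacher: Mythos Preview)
Your approach is essentially the same as the paper's: resolve locally using Propositions \ref{prop:lambdafibres}--\ref{prop:0fibres}, then verify triviality of the canonical sheaf by an explicit computation and $H^1(\calO)=0$ by a Leray-type argument (the paper cites \cite[Proposition 2.7]{cytfmqk3s} for the latter, which proceeds along the lines you indicate). One small slip: $\bar{\calX}_{n,g}$ is by definition the pullback of the \emph{unresolved} family $\bar{\calX}_n$, so it is not smooth over $g^{-1}(U_{M_n})$; it still carries the curves of $cA_m$ singularities from Table \ref{tab:Mnsingularities}, which form sections and must also be crepantly resolved (the paper notes this explicitly). This does not affect the substance of your argument.
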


\begin{proof} The singularities in $\bar{\calX}_{n,g}$ away from the fibres over the points in the preimage $g^{-1}(\{0,\infty,\lambda_1,\ldots,\lambda_q\})$ are given in Table \ref{tab:Mnsingularities}; they all lie along smooth curves which form sections of the fibration, so may be crepantly resolved. In neighbourhoods of the fibres over $g^{-1}(\{0,\infty,\lambda_1,\ldots,\lambda_q\})$, crepant birational tranformations resolving the singularities in $\bar{\calX}_{n,g}$ are computed in Propositions \ref{prop:lambdafibres}, \ref{prop:infinityfibres} and \ref{prop:0fibres}. Using these explicit birational transformations, it is a straightforward calculation to show that $\calX_{n,g}$ has trivial canonical sheaf. Finally, vanishing of $H^1(\calX_{n,g},\calO_{\calX_{n,g}})$ follows by the method used to prove \cite[Proposition 2.7]{cytfmqk3s}. 
\end{proof}

\begin{remark} \label{rem:missing} This theorem should be thought of as a generalization of the results of \cite[Section 2]{cytfmqk3s}. Note that, even in the $n = 2$ case, the main result \cite[Corollary 2.8]{cytfmqk3s} of that section is somewhat weaker than the result above, as it only classifies the cases where the threefolds $\bar{\calX}_{2,g}$ have Calabi-Yau \emph{resolutions}. In contrast, the result above classifies all cases where $\bar{\calX}_{2,g}$ is \emph{birational} to a Calabi-Yau threefold, which is a weaker condition. In particular, here we acquire additional cases $[y_1,\ldots,y_l] = [1,3]$, $[2,3]$, $[3,3]$, $[3,4]$, $[5]$, $[6]$, and $[7]$ (also see Remark \ref{rem:delta}).
\end{remark}

Using the methods from \cite{cytfmqk3s}, it is now a simple matter to compute the Hodge numbers of these threefolds.

\begin{proposition} \label{prop:h11} Let $\calX_{n,g}$ be a Calabi-Yau threefold as in Theorem \ref{thm:CY3}. Suppose that $\calX_{n,g}$ is smooth. Then
\[h^{1,1}(\calX_{n,g}) = 20 + \sum_{i=1}^{k}\left(nx_i^2+1\right) + \sum_{j=1}^l \left(c(n,y_j) - 1\right)\]
where $[x_1,\ldots,x_k]$ and $[y_1,\ldots,y_l]$ are the partitions of $d$ encoding the ramification profiles of $g$ over $\lambda = \infty$ and $\lambda = 0$ respectively, and $c(n,y_j)$ is the integer in row $n$ and column $y_j$ of Table \ref{tab:0fibres}.
\end{proposition}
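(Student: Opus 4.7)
The plan is to reduce $h^{1,1}(\calX_{n,g})$ to the Picard number $\rho(\calX_{n,g})$ and compute the latter by a Shioda--Tate-type decomposition of $\NS(\calX_{n,g})_\QQ$, following the approach of \cite[Section 3]{cytfmqk3s}.

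The first step is the Calabi--Yau reduction: since $\omega_{\calX_{n,g}} \cong \calO_{\calX_{n,g}}$ and $H^1(\calX_{n,g},\calO_{\calX_{n,g}}) = 0$, Serre duality forces $h^{2,0}(\calX_{n,g}) = h^{0,1}(\calX_{n,g}) = 0$, so by the Lefschetz $(1,1)$-theorem $h^{1,1}(\calX_{n,g}) = \rho(\calX_{n,g})$. To compute the Picard number, I would decompose $\NS(\calX_{n,g})_\QQ$ via the Leray spectral sequence for $\pi\colon \calX_{n,g}\to\PP^1$, which degenerates at $E_2$ by \cite[Corollary 15.15]{htdcl2cpm}. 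Since $R^1\pi^U_*\QQ = 0$ on the smooth locus, this yields three natural sources of classes: (i) the class of a fibre, contributing $1$; (ii) $19$ classes coming from the trivial local subsystem $\calN\calS(\calX_{n,g}^U)$ associated to the $M_n$-polarization, with no further contribution from $H^0(\PP^1, R^2\pi_*\QQ)$ because the transcendental subsystem $\calT(\calX_{n,g}^U)$ is the pullback of $\VV_n^+$ along the non-constant generalized functional invariant and so carries no monodromy invariants; and (iii) for each reducible singular fibre $F_p = \bigcup_{i=1}^{m_p} F_{p,i}$, an extra $m_p - 1$ classes corresponding to the components modulo the relation $\sum_i F_{p,i} \sim F$.

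It then remains to enumerate the reducible fibres. By the smoothness hypothesis and Proposition \ref{prop:lambdafibres}, the map $g$ is unramified over $\lambda_1, \ldots, \lambda_q$, so the corresponding fibres are irreducible (possibly nodal) K3s and contribute nothing. Any ramification of $g$ away from $\{0,\infty,\lambda_1,\ldots,\lambda_q\}$ lies above smooth fibres of $\calX_n$ and also yields irreducible fibres in $\calX_{n,g}$. Proposition \ref{prop:infinityfibres} gives a Type III semistable fibre with $nx_i^2 + 2$ components over each preimage of $\lambda = \infty$ of ramification index $x_i$, contributing $nx_i^2 + 1$ classes each; Proposition \ref{prop:0fibres} gives a fibre with $c(n,y_j)$ components over each preimage of $\lambda = 0$ of ramification index $y_j$, contributing $c(n,y_j) - 1$ classes each. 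Summing (i), (ii), and (iii) yields
\[
h^{1,1}(\calX_{n,g}) = 20 + \sum_{i=1}^{k}(nx_i^2+1) + \sum_{j=1}^{l}(c(n,y_j) - 1),
\]
as claimed.

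The main technical point is step (ii): one must verify that $H^0(\PP^1, R^2\pi_*\QQ)$ contributes exactly $19$ classes beyond the fibre class, with no extra monodromy-invariant transcendental classes, and that no classes coming from the singular fibres escape the count in (iii). Since every singular fibre appearing here is smooth, a nodal K3, or a Type III semistable K3, these are precisely the fibre types analyzed in \cite[Section 3]{cytfmqk3s}, so the verification reduces to a direct application of the local arguments there.
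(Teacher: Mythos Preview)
Your approach is correct and essentially identical to the paper's, which simply cites \cite[Lemma 3.2]{cytfmqk3s}; you have unpacked what that lemma does, namely the Shioda--Tate decomposition of $\NS(\calX_{n,g})_\QQ$ into fibre class, horizontal $M_n$-classes, and extra fibre components.

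One small correction: your final paragraph asserts that every singular fibre is smooth, a nodal K3, or Type~III semistable, but this is not true. Many entries in Table~\ref{tab:0fibres} are not starred (e.g.\ $n=2$ with $y_j \in \{1,2,3\}$, $n=3$ with $y_j \in \{1,2,3\}$, $n=5$ with $y_j=1$), and the descriptions in the proof of Proposition~\ref{prop:0fibres} confirm these fibres are not semistable. This does not break your argument, since the input to \cite[Lemma 3.2]{cytfmqk3s} is only the component count, not the semistability of the fibre; but you should drop that sentence rather than rest the justification on a false dichotomy.
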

\begin{proof} This is a straightforward computation using \cite[Lemma 3.2]{cytfmqk3s}.\end{proof}

\begin{proposition} \label{prop:h21}  Let $\calX_{n,g}$ be a Calabi-Yau threefold as in Theorem \ref{thm:CY3}. Suppose that $\calX_{n,g}$ is smooth. Then we have
\[h^{2,1}(\calX_{n,g}) = k+l -2 + \tfrac{1}{2}(m_{\mathrm{odd}} - qd) + \delta,\]
 where 
\begin{itemize}
\item $k$ and $l$ denotes the number of ramification points of $g$ over the points $\lambda = \infty$ and $\lambda = 0$ respectively,
\item $m_{\mathrm{odd}}$ denotes the number of ramification points of odd order lying over the $2$-orbifold points in the set $\{\lambda_1,\ldots,\lambda_q\}$, 
\item $d$ is the degree of $g$,
\item $\delta$ is a correction term which equals $2$ if $n=2$ and $[y_1,\ldots,y_l]$ is $[3,3]$ or $[7]$; equals $1$ if 
\begin{itemize}
\item $n = 2$ and $[y_1,\ldots,y_l]$ is one of $[1,3]$, $[2,3]$, $[3,4]$, $[5]$, $[6]$, $[8]$, or
\item $n = 3$ and $[y_1,\ldots,y_l] = [5]$, or
\item $n=5$ and  $[y_1,\ldots,y_l]$ is $[3]$ or $[4]$;
\end{itemize}
and equals $0$ otherwise \textup{(}see Remark \ref{rem:delta}\textup{)}.
\end{itemize}
\end{proposition}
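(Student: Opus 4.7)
The proof follows the blueprint of \cite[Section 3]{cytfmqk3s}, which handles the $n=2$ case. Combining Lemma \ref{lem:hodgeinjection} with Proposition \ref{prop:pullback} yields an exact sequence of Hodge structures
\[0 \longrightarrow H \longrightarrow H^3(\calX_{n,g},\CC) \longrightarrow C \longrightarrow 0,\]
where $H := H^1(\PP^1, j_* g^* \VV_n^+)$ and $C$ has Hodge components only in types $(1,2)$ and $(2,1)$. The Calabi-Yau hypothesis together with Lemma \ref{lem:hodgeinjection} forces $h^{3,0}(H) = 1$, and Hodge symmetry gives $h^{0,3}(H) = 1$, so
\[h^{2,1}(\calX_{n,g}) \;=\; \tfrac{1}{2}\bigl(\dim H - 2\bigr) + h^{2,1}(C).\]

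I will compute $\dim H$ using the Poincar\'e-type formula \eqref{eq:poincare} applied to the rank-three local system $g^* \VV_n^+$. The local monodromy at a point $p \in \PP^1 \setminus U$ is the $e_p$-th power of the local monodromy of $\VV_n^+$ at $g(p)$, whose conjugacy classes were recorded in Section \ref{sec:localmonodromy}. Enumerating the contributions $R(p) = 3 - \dim V^{\gamma_p}$:
\begin{itemize}
\item each of the $k$ preimages of the cusp $\lambda = \infty$ contributes $R = 2$ (any positive power of a rank-three unipotent Jordan block remains such), for a total of $2k$;
\item the preimages of the 2-orbifolds $\lambda_1,\ldots,\lambda_q$ contribute in aggregate $m_{\mathrm{odd}}$ (for $n \in \{2,3,4\}$, where the monodromy is conjugate to $\operatorname{diag}(-1,1,1)$) or $2m_{\mathrm{odd}}$ (for $n > 4$, monodromy $\operatorname{diag}(-1,-1,1)$);
\item each of the $l$ preimages of the order-$a$ orbifold $\lambda = 0$ contributes $R(y_j)$, computed case by case from the matrix $M_0$ of Section \ref{sec:localmonodromy} according to the residue of $y_j$ modulo $a$.
\end{itemize}
Substituting into $\dim H = \sum_p R(p) - 6$, using the Hurwitz relation \eqref{eq:hurwitz} to eliminate the extraneous ramification $r$, and simplifying yields an expression of the form
\[\tfrac{1}{2}(\dim H - 2) \;=\; k + l - 2 + \tfrac{1}{2}(m_{\mathrm{odd}} - qd) + \delta',\]
where $\delta'$ is a partition-dependent correction collecting the anomalous $R(y_j) \neq 2$ contributions. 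Case-by-case enumeration over the partitions in Table \ref{tab:partitions} should then match $\delta'$ with the values of $\delta$ tabulated in the statement.

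It remains to verify that $h^{2,1}(C) = 0$. By the proof of Lemma \ref{lem:hodgeinjection}, $C$ injects into $\bigoplus_i H^3(S_i,\CC)$, where the $S_i$ are the irreducible components of the singular fibres of $\pi\colon \calX_{n,g} \to \PP^1$. Under the smoothness hypotheses of Theorem \ref{thm:CY3}, Propositions \ref{prop:lambdafibres}, \ref{prop:infinityfibres}, and \ref{prop:0fibres} exhibit every such component as either a rational surface or a (possibly singular) K3 surface with du Val singularities; in all cases $H^3(S_i,\CC) = 0$, so $h^{2,1}(C) = 0$. The main obstacle will be the finite but tedious case analysis needed to identify the monodromy correction $\delta'$ with the tabulated values of $\delta$ for each of the exceptional partitions appearing in the statement, taking care with the differing monodromy behaviour of $\VV_n^+$ between the cases $n \in \{2,3,4\}$ and $n > 4$.
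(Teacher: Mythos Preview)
Your overall strategy is the same as the paper's: reduce $h^{2,1}$ to computing $\dim H^1(\PP^1,j_*g^*\VV_n^+)$ via the Poincar\'e formula \eqref{eq:poincare}, using the local monodromy data from Section~\ref{sec:localmonodromy}. Your treatment of the cokernel $C$ is also fine (the paper simply cites \cite[Lemma 3.4]{cytfmqk3s} for this).

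However, there is a concrete error in your monodromy bookkeeping. You assert that for $n > 4$ the local monodromy of $\VV_n^+$ at the $2$-orbifold points $\lambda_1,\ldots,\lambda_q$ is $\operatorname{diag}(-1,-1,1)$, yielding a contribution of $2m_{\mathrm{odd}}$. This is incorrect: the $\lambda_i$ are images of the \emph{smooth} fixed points of the Fricke involution, not of the order-$2$ elliptic points of $X_0(n)$. By the list in Section~\ref{sec:localmonodromy}, their local monodromy is therefore $\operatorname{diag}(-1,1,1)$ and the contribution is $m_{\mathrm{odd}}$, exactly as in the $n\in\{2,3,4\}$ cases. (This is also forced by Proposition~\ref{prop:lambdafibres}, since a nodal K3 fibre in a smooth threefold has local monodromy a single reflection.) For $n\in\{6,7,8,9,11\}$ one has $\nu_2(X_0(n))=0$, so there are no $\operatorname{diag}(-1,-1,1)$ points at all; for $n=5$ the unique such point sits at $\lambda=0$, not among the $\lambda_i$. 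With your contribution of $2m_{\mathrm{odd}}$ the resulting $\delta'$ would depend on $m_{\mathrm{odd}}$ and could not match the tabulated $\delta$, so the case analysis you defer would fail.

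A minor point: the Hurwitz relation \eqref{eq:hurwitz} plays no role here, since the extraneous ramification $r$ never enters $\sum_p R(p)$ (generic points have trivial monodromy). The paper's proof proceeds by direct case analysis on the partitions in Table~\ref{tab:partitions} without invoking Hurwitz.
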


\begin{proof} The proof of this proposition follows by the same method as was used to prove \cite[Corollary 3.9]{cytfmqk3s}. We begin by setting up some notation: let $\pi_{n,g}\colon \calX_{n,g} \to B \cong \PP^1$ denote the K3 fibration on $\calX_{n,g}$, let $j\colon U \hookrightarrow B$ denote the open set over which $\calX_{n,g}$ is smooth and the fibres of $\calX_{n,g}$ are smooth K3 surfaces, and let $\pi_{n,g}^U$ denote the restriction of $\pi_{n,g}$ to $U$. 

Note that, since $\calX_{n,g}$ is a smooth Calabi-Yau threefold, we have $h^{2,1}(\calX_{n,g}) = \frac{1}{2}b_3(\calX_{n,g}) -1$, so it suffices to calculate $b_3(\calX_{n,g})$. It follows from \cite[Lemma 3.4]{cytfmqk3s} that $b_3(\calX_{n,g}) = h^1(B,j_*R^2(\pi_{n,g}^U)_*\CC)$ and, using Equation \eqref{eq:splitting}, we have 
\[h^1(B,j_*R^2(\pi_{n,g}^U)_*\CC) = h^1(B,j_*\calT(\calX_{n,g}^U)).\] 
Now, by Proposition \ref{prop:pullback}, we have $\calT(\calX_{n,g}^U) = g^*\VV_n^+$, so we obtain $b_3(\calX_{n,g}) = h^1(B,j_*g^*\VV_n^+)$,
which may be computed using Equation \eqref{eq:poincare} and the explicit monodromy matrices computed in Section \ref{sec:localmonodromy}.

We illustrate this computation for $n=4$, the other cases are analogous. In this case:
\begin{itemize}
\item if $g$ ramifies to order $y$ at a preimage $p$ of $0$, then $R(p) = 4 - \mathrm{hcf}(y,2)$,
\item if $g$ ramifies to order $z$ at a preimage $p$ of $\lambda_1 = 64$, then $R(p) = 2 - \mathrm{hcf}(z,2)$, and
\item at any preimage $p$ of $\infty$, we have $R(p) = 2$.
\end{itemize}
Thus Equation \eqref{eq:poincare} gives us
\begin{align*} b_3(\calX_{4,g}) &= h^1(B,j_*g^*\VV_4^+) \\
 &= \sum_{i=1}^l(4 - \mathrm{hcf}(y_i,2)) + \sum_{j=1}^{m_1}(2 - \mathrm{hcf}(z_{j,1},2)) + 2k - 6.\\
&= 2k + 2l + m_{\mathrm{odd}} - 6 + \sum_{i=1}^l (2 - \mathrm{hcf}(y_i,2)). \end{align*}
Note that the value $\sum_{i=1}^l (2 - \mathrm{hcf}(y_i,2))$ is the number of ramification points of odd order lying over the point $\lambda = 0$. Examining the possible ramification profiles over this point, as given in Table \ref{tab:partitions}, we see that when $n=4$ this value is always equal to $4-d$. So we have
\[h^1(B,j_*g^*\VV_4^+) = 2k + 2l -2 + (m_{\mathrm{odd}} - d).\]
The identity $h^{2,1}(\calX_{4,g}) = \frac{1}{2}b_3(\calX_{4,g}) -1$ then gives
\[h^{2,1}(\calX_{4,g}) = k+l - 2 + \tfrac{1}{2} (m_{\mathrm{odd}} - d),\]
as required.
 \end{proof}

\begin{remark} \label{rem:delta} The correction term $\delta$ deserves a brief explanation here. Indeed,
\begin{itemize}
\item when $n=2$, the cases $[y_1,\ldots,y_l] = [a]$, for $a \in \{5,6,7,8\}$, are smooth degenerations of the cases $[y_1,\ldots,y_l] = [4,a-4]$,
\item when $n=3$, the case $[y_1,\ldots,y_l] = [5]$ is a smooth degeneration of the case $[y_1,\ldots,y_l] = [2,3]$, and
\item when $n=5$, the cases $[y_1,\ldots,y_l] = [a]$, for $a \in \{3,4\}$, are smooth degenerations of the cases $[y_1,\ldots,y_l] = [2,a-2]$.
\end{itemize}
Moreover, when $n=2$, the case with ramification profile $[x_1,\ldots,x_k]$ over $\lambda = \infty$ and  $[3,a]$ over $\lambda = 0$, for some $a \in \{1,2,3,4\}$,  is a degeneration of the case with ramification profile $[1,x_1,\ldots,x_k]$ over $\lambda = \infty$ and $[4,a]$ over $\lambda = 0$.

The values of  $h^{2,1}(\calX_{n,g})$ must be invariant under these degenerations, and the correction term $\delta$ ensures that this holds. In this way, $\delta$ may be thought of as a correction for ``non-genericity''. It corrects for the fact that, for certain choices of $n,g$ as above, the Calabi-Yau threefold $\calX_{n,g}$ is not generic in its deformation class; it may be thought of as measuring the codimension of the  family $\calX_{n,g}$ in moduli.

Note that this also means that, whilst \cite[Corollary 2.8]{cytfmqk3s} did not capture all of the threefolds with $n=2$ from Theorem \ref{thm:CY3} (see Remark \ref{rem:missing}), it did capture the generic member of every deformation class (i.e. all cases with $\delta = 0$).
\end{remark}

\begin{remark} This formula for $h^{2,1}(\calX_{n,g})$ can be simplified significantly if we make the genericity assumptions that $\delta = 0$ (so $\calX_{n,g}$ is generic in its deformation class) and $g$ is unramified over $\lambda \in \{\lambda_1,\ldots,\lambda_q\}$. In this case, we have $m_{\mathrm{odd}} = m_1+\cdots + m_q = qd$, so it follows from Theorem \ref{thm:CY3} that $h^{2,1}(\calX_{n,g}) = r$, the degree of ramification away from $\lambda \in \{0,\infty,\lambda_1,\ldots,\lambda_q\}$.

In this setting, an argument analogous to that used to prove \cite[Proposition 4.1]{cytfmqk3s} shows that all small deformations of $\calX_{n,g}$ are induced by deformations of $g$ that preserve the ramification profiles over $\lambda \in \{0,\infty\}$. One may use this to compare the moduli spaces of the Calabi-Yau threefolds $\calX_{n,g}$ to the Hurwitz spaces of the maps $g$.
\end{remark}

\begin{remark}\label{rem:mirrors2} It is interesting to look at the mirror symmetric interpretation of these results. The philosophy of \cite{mstdfcym} states that a smooth Calabi-Yau $\calX_{n,g}$ with $l=2$ (so there are $2$ ramification points over $\lambda = 0$) should split into a pair of LG-models, each corresponding to a quasi-Fano variety. These quasi-Fano varieties may then be glued to give a \emph{Tyurin degeneration}, which smooths to give a Calabi-Yau variety mirror to $\calX_{n,g}$. Further evidence for this conjecture is given in \cite{mpcytmpqft}.

We can say more: if $\calX_{n,g}$ is a smooth Calabi-Yau with $\delta = 0$ (so $\calX_{n,g}$ is generic in its deformation class) and $l=2$, and $[y_1,y_2]$ is the ramification profile over $\lambda = 0$, then the pairs $(n,y_i)$ should equal (hypersurface degree, index) for the corresponding quasi-Fano varieties. Indeed, one may immediately observe that the pairs $(n,y_i)$ satisfying these assumptions are 
\[\begin{array}{c}(2,1), (2,2), (2,4), (3,1), (3,2), (3,3), (4,1), (4,2),\\ (5,1), (5,2), (6,1), (7,1), (8,1), (9,1), (11,1),\end{array}\]
which give all fifteen possible (hypersurface degree, index) pairs for smooth Fano varieties with Picard rank $1$ and hypersurface degree $\geq 2$.

\cite{mstdfcym} does not provide a mirror interpretation for the Calabi-Yau varieties $\calX_{n,g}$ with $l=1$. The authors plan to address this in future work.
\end{remark}

\bibliography{Publications,Preprints}
\bibliographystyle{amsalpha}
\end{document}